\newtheorem{theo}{Theorem}[section]
\newtheorem{de}[theo]{Definition}
\newtheorem{prop}[theo]{Proposition}
\newtheorem{lem}[theo]{Lemma}
\theoremstyle{definition}
\newtheorem{rem}[theo]{Remark}
\title[Pure point/Continuous decomposition of measures and diffraction]{Pure point/Continuous decomposition of translation-bounded measures and diffraction}
\author{Jean-Baptiste Aujogue}
\thanks{\\ 2010 Mathematics Subject Classification: 52C23, 37B50.
\\
\textbf{Keywords:} Translation-bounded measures, Diffraction, Weighted Meyer sets, Quasicrystals.
\\
Work supported by the FONDECYT Post-doctoral grant n. 3150535.}
\begin{document}

\maketitle


\begin{abstract} In this work we consider translation-bounded measures over a locally compact Abelian group $\mathbb{G}$, with particular interest for their so-called diffraction. Given such a measure $\Lambda$, its diffraction $\widehat{\gamma}$ is another measure on the Pontryagin dual $\widehat{\mathbb{G}}$, whose decomposition into the sum $\widehat{\gamma} = \widehat{\gamma}_{\mathrm{p}}+ \widehat{\gamma}_{\mathrm{c}}$ of its atomic and continuous parts is central in diffraction theory. The problem we address here is whether the above decomposition of $\widehat{\gamma}$ lifts to $\Lambda$ itself, that is to say, whether there exists a decomposition $\Lambda = \Lambda _{\mathrm{p}} + \Lambda _{\mathrm{c}}$, where $\Lambda _{\mathrm{p}}$ and $\Lambda _{\mathrm{c}}$ are translation-bounded measures having diffraction $\widehat{\gamma}_{\mathrm{p}}$ and $\widehat{\gamma}_{\mathrm{c}}$ respectively. Our main result here is the almost sure existence, in a sense to be made precise, of such a decomposition. It will also be proved that a certain uniqueness property holds for the above decomposition. Next we will be interested in the situation where translation-bounded measures are weighted Meyer sets. In this context, it will be shown that the decomposition, whether it exists, also consists of weighted Meyer sets. We complete this work by discussing a natural generalization of the considered problem.
\end{abstract}



\section{\textsf{Outline}}

In this work we consider certain measures over a locally compact Abelian (LCA) group $\mathbb{G}$, for which we aim to establish a general result on structures that goes beyond the ones called \textit{purely diffractive}. The motivation of this work comes from solid state physics, where materials with atomic-like kinematic diffraction have focused a lot of attention during the last 30 years since the discovery by Shechtman \textit{et al.} \cite{SheBleGraCah} of physical materials nowadays called \textit{Quasicrystal}, a work for which Schechtman was awarded the Nobel price in 2011.








Mathematically, one describes a material by a \textit{translation-bounded measure} $\Lambda$ over the group $\mathbb{G}$ (thought as the ambient space, and usually taken as $\mathbb{R}^d$, $d=1$, 2 or 3). Very often a simpler picture is displayed by a weighted Dirac comb over a point set having specific geometric properties (lattice, Delone, FLC, Meyer property and so on), supposed to model the position and nature of the constituents of the alloy. However this is not required for a proper definition of diffraction so we shall not assume this, at least not at the beginning. Although when dealing with some translation-bounded measure one thinks of an alloy as made of finitely many atoms, the structures one considers are infinite-sized idealizations. Now the mathematical diffraction theory as proposed by Hof in \cite{Hof}, see also the recent review article \cite{BaaGri}, assigns to a general translation-bounded measure $\Lambda$ on a LCA group $\mathbb{G}$ another measure $\gamma$ on $\mathbb{G}$, given as a self-convolution product of $\Lambda$ passed to the infinite volume limit along a certain sequence $(\mathcal{A}_k)_k$ of sets (a Van Hove sequence)
\begin{align*}  \gamma  := \lim _{n \rightarrow \infty} \frac{1}{\left|\mathcal{A}_k \right|} \mathsf{\Lambda}\vert _{\mathcal{A}_k} * \widetilde{\mathsf{\Lambda}\vert _{\mathcal{A}_k}}
\end{align*}

\vspace{0.1cm}
Such limit, whenever it exists, and it always will after possibly extracting some sub-sequence, is called the \textit{autocorrelation} of $\Lambda$ (with respect to the averaging sequence $(\mathcal{A}_k)_k$), and is translation-bounded on $\mathbb{G}$. Its Fourier transform $\widehat{\gamma}$ is called the \textit{diffraction measure} (or simply diffraction) of $ \Lambda$, and is a positive translation-bounded measure on the Pontryagin dual $\widehat{\mathbb{G}}$. It is this latter which is thought as the outcome of a diffraction experiment set on an alloy modeled by $\Lambda$, see the discussion in \cite{Hof}. As any complex measure over $\widehat{\mathbb{G}}$, the diffraction measure splits as the sum 
\vspace{0.1cm}
\begin{align*} \widehat{\gamma} = \widehat{\gamma}_{\mathrm{p}}+ \widehat{\gamma}_{\mathrm{c}} 
\end{align*}

\vspace{0.1cm}

of two mutually singular summands, namely a purely atomic (or \textit{pure point}) part $\widehat{\gamma}_{\mathrm{p}}$ and a continuous part $\widehat{\gamma}_{\mathrm{c}}$. The summand $\widehat{\gamma}_{\mathrm{p}}$ is called the \textit{Bragg spectrum} of the underlying translation-bounded measure $\Lambda$, and any $\omega \in \widehat{\mathbb{G}}$ in its support, that is, with $\widehat{\gamma}(\left\lbrace \omega \right\rbrace )> 0$, is called a \textit{Bragg peak} for $\Lambda$. The measure $\Lambda$ is then called \textit{purely diffractive}, or said to have pure point spectrum, whenever $\widehat{\gamma}$ is a pure point measure, that is, if its continuous part $\widehat{\gamma}_{\mathrm{c}}$ vanishes. Purely diffractive measures are, at least in the setting of weighted point sets, rather well understood, see the general characterizations made in \cite{LeeMooSol, Gou2, BaaMoo, BaaLen}. It encompasses the (uniform Dirac combs over) lattices and regular model sets \cite{Baa2, Sch, Moo2}, and is agremented by many explicit examples \cite{BaaMoo0, BaaMooSch, Baa2}, see again the review article \cite{BaaGri}. The situation of mixed diffraction spectrum is much less understood, although some explicit computations exist in this case \cite{BaaBirGri, BaaGri2}.\\

In this work we will be specially interested in the situation of mixed diffraction spectrum. Namely, if $\Lambda$ is a translation-bounded measure with non-pure point diffraction say $\widehat{\gamma}$, we wish to know whether $\Lambda$ still admits a natural "purely diffractive part" $\Lambda _{\mathrm{p}}$ whose diffraction would exactly be the pure point part $\widehat{\gamma}_{\mathrm{p}}$, and such that $\Lambda - \Lambda _{\mathrm{p}}$ would have diffraction the non-vanishing continuous part $\widehat{\gamma}_{\mathrm{c}}$. That is to say, we raise the following question:\\


\textbf{Q.} \textit{Given a translation-bounded measure $\Lambda$ on $\mathbb{G}$ having diffraction $\widehat{\gamma} = \widehat{\gamma}_{\mathrm{p}}+ \widehat{\gamma}_{\mathrm{c}} $, does there exist a decomposition of the form $\Lambda = \Lambda _{\mathrm{p}} + \Lambda _{\mathrm{c}}$, with $\Lambda _{\mathrm{p}}$ and $\Lambda _{\mathrm{c}}$ being translation-bounded measures having diffraction $\widehat{\gamma}_{\mathrm{p}}$ and $\widehat{\gamma}_{\mathrm{c}}$ respectively?}\\




 As we will see, it is possible to give a rigorous answer to this question when one does not consider a translation-bounded measure $\Lambda$ as a deterministic object, but rather a \textit{random} one. Physically this means that the precise structure of the underlying alloy is unknown, although one knows the value of frequency of appearance of any possible local configuration. On the mathematical side this means that we do not consider a single translation-bounded measure but the whole collection $\mathcal{M}^{\infty}(\mathbb{G})$ of such measures, endowed with a translation-invariant (ergodic) probability distribution $m$ on it. In other words we shall deal here with an \textit{ergodic system of translation-bounded measures} $(X, \mathbb{G}, m)$, $X\subset \mathcal{M}^{\infty}(\mathbb{G})$ being the support of $m$. In the setting of (uniform Dirac combs over) point sets one speaks of (ergodic) stationary point process \cite{Gou2, BaaBirMoo, DenMoo}, see also the more abstract approach \cite{LenMoo}. This approach by dynamical systems presents the great advantage to connect the diffraction of a random translation-bounded measure modeled by a dynamical system $(X, \mathbb{G}, m)$, which is well-defined and unique, with the \textit{dynamical spectrum} of $(X, \mathbb{G}, m)$ via the so-called Dworkin argument, see \cite{Dwo, BaaLen}, also Paragraph \ref{Par:diffraction} in the main text. 


\subsection*{\textsf{Overview on the content}} 
Our main result (see Section \ref{Section:decomposition}) is that, for an ergodic random translation-bounded measure the decomposition almost surely exists, and is provided by two other ergodic random translation-bounded measures which are naturally obtained from the former. In the vocabulary of dynamical systems, this states as follows (Theorem \ref{theo:decomposition} and Theorem \ref{theo:uniqueness.decomposition} in the main text):\\


\textbf{Theorem 1.} \textit{Let $(X, \mathbb{G},m)$ be an ergodic system of translation-bounded measures with diffraction $\widehat{\gamma}= \widehat{\gamma}_{\mathrm{p}} + \widehat{\gamma}_{\mathrm{c}}$.}\\
\\
\textrm{(i)}\textit{ There exist two ergodic systems of translation-bounded measures:}
\vspace{0.2cm} 
\begin{itemize}
\item \textit{$(X_{\mathrm{p}}, \mathbb{G},m_{\mathrm{p}})$ having diffraction $\widehat{\gamma}_{\mathrm{p}}$,}
\vspace{0.2cm} 
\item \textit{$(X_{\mathrm{c}}, \mathbb{G},m_{\mathrm{c}})$ having diffraction $\widehat{\gamma}_{\mathrm{c}}$,}
\end{itemize}
\vspace{0.2cm} 
\textit{which are Borel factors of $(X, \mathbb{G},m)$ under two Borel factor maps} $$\xymatrixcolsep{3.5pc}\xymatrix{  X \ni \Lambda \, \ar@{|->}[r] & \,\Lambda _{\text{p}} \in X_{\text{p}} } \qquad \xymatrixcolsep{3.5pc}\xymatrix{  X \ni \Lambda \, \ar@{|->}[r] & \,\Lambda _{\text{c}} \in X_{\text{c}} }$$
\textit{such that the equality $\Lambda = \Lambda _{\mathrm{p}}+ \Lambda _{\mathrm{c}}$ occurs for $m$-almost every $\Lambda\in X$.}\\
\\
\textrm{(ii)}\textit{ The pair of systems of translation-bounded measures satisfying part }\textrm{(i)} \textit{is unique, as well as the involved pair of Borel factor maps up to almost everywhere equality.}\\

In this work we are particularly interested in the case where translation-bounded measures are \textit{weighted Meyer sets} of $\mathbb{G}$. These are weighted Dirac combs supported on sets of very special kind, the so-called Meyer sets. In this setting, the natural question is, given an ergodic system of weighted Meyer sets, to know whether translation-bounded measures in the spaces $X_{\mathrm{p}}$ and $X_{\mathrm{c}}$ resulting from Theorem 1 also are weighted Meyer sets. As we will see, this turns out to be correct (see Section \ref{Section:weighted.meyer.sets}). 
 
In fact, much more is true: A particular property of weighted Meyer sets is that they are always supported on some point set of special feature, namely a \textit{model set} \cite{Meyer, Sch, Moo2,  Auj, HucRic}. Such point sets emerge if one not only consider the "ambient" space $\mathbb{G}$ but a product $H \times \mathbb{G}$ of it with some locally compact Abelian group $H$, inside which one consider (whenever it exists) a lattice $\Sigma$. Then denoting $z^{\Vert }$ and $z^{\perp}$ the projections of an element $z\in H\times \mathbb{G}$ along $\mathbb{G}$ and $H$ respectively, if one select a compact topologically regular subset $W$ of $H$ then one gives rise to a point set, called here a "closed" model set, of the form
\begin{align*} \Delta := \left\lbrace z ^{\Vert} \in \mathbb{G} \; : \; z \in \Sigma , \; z ^{\perp} \in W\right\rbrace 
\end{align*}

\vspace{0.1cm}
Closed model sets are always Meyer sets, and for a partial converse any weighted Meyer set is supported on a point set of this form \cite{Meyer, Moo2}. Considering this we have the following (see Theorem \ref{theo:decomposition.weighted.meyer.sets}):\\

\textbf{Theorem 2.} \textit{Let $(X, \mathbb{G}, m)$ be an ergodic system of weighted Meyer sets. Then the following property holds $m$-almost everywhere: Whenever $\Lambda$ is supported on a closed model set, then so are its summands $\Lambda _{\mathrm{p}}$ and $\Lambda _{\mathrm{c}}$.}\\


This result in particular asserts that once $X$ consists of weighted Meyer sets then so does $X_{\mathrm{p}}$ and $X_{\mathrm{c}}$. It is in particular interesting (and surprising) to see the systematic emergence of weighted Meyer sets, namely the ones forming $X_{\text{c}}$ as long as this latter is non-trivial, having (for almost all of them) purely continuous diffraction. Only few is known about this latter type of weighted Meyer sets, although some study have been performed (on weighted lattices) for instance in \cite{BaaBirGri}.\\




The remaining part (Section \ref{Section:admissible.operators}) of this work is about a generalization of our motivating question. Namely, in the decomposition $\widehat{\gamma} = \widehat{\gamma}_{\mathrm{p}}+ \widehat{\gamma}_{\mathrm{c}} $ stated earlier the continuous part $\widehat{\gamma}_{\mathrm{c}}$ always splits into two mutually singular components 
\vspace{0.1cm}
\begin{align*}\widehat{\gamma}_{\mathrm{c}}= \widehat{\gamma}_{\mathrm{ac}}+ \widehat{\gamma}_{\mathrm{sc}}
\end{align*}

\vspace{0.1cm}
where $\widehat{\gamma}_{\mathrm{ac}}$ is its absolutely continuous part with respect to a fixed Haar measure on $\widehat{\mathbb{G}}$, and $\widehat{\gamma}_{\mathrm{sc}}$ the remaining part, its singular continuous part. It is then natural to ask whether a translation-bounded measure $\Lambda$ with diffraction $\widehat{\gamma} $ admits a decomposition of the form $\Lambda = \Lambda _{\mathrm{p}} + \Lambda _{\mathrm{ac}}+ \Lambda _{\mathrm{sc}}$, with $\Lambda _{\mathrm{p}}$, $\Lambda _{\mathrm{ac}}$ and $\Lambda _{\mathrm{sc}}$ being translation-bounded measures having diffraction $\widehat{\gamma}_{\mathrm{p}}$, $\widehat{\gamma}_{\mathrm{ac}}$ and $\widehat{\gamma}_{\mathrm{sc}}$ respectively. As before, we shall consider this question on a dynamical point of view and, given an ergodic system of translation bounded measures $(X, \mathbb{G}, m)$, look for dynamical systems of translation-bounded measures $(X_{\mathrm{ac}}, \mathbb{G},m_{\mathrm{ac}})$ and $(X_{\mathrm{sc}}, \mathbb{G},m_{\mathrm{sc}})$ such that a generalized form of Theorem 1 holds. 



We could not prove or disprove the existence of such systems. Instead, we propose here a reformulation of this problem: Our observation here is that the two Borel factors we look for have, whenever they exist, diffraction being absolutely continuous with respect to $\widehat{\gamma}$, hence of the form $f. \widehat{\gamma}$ for functions $f$ on $\widehat{\mathbb{G}}$ belonging in this case to $L^{\infty } (\widehat{\mathbb{G}}, \widehat{\gamma})$. Then what we would get is a criterion on the existence of a Borel factor of $(X, \mathbb{G}, m)$, whose diffraction is of the form $f. \widehat{\gamma}$ where the density function $f$ is prescribed. Our aim is to provide here such a criterion, which involves certain operators acting on $L^2(X,m)$ called here \textit{admissible} (see Definition \ref{def.admissibility}), and that states as follows (see Theorem \ref{theo:correspondence}):\\  

\textbf{Theorem 3.} \textit{Let $(X, \mathbb{G}, m)$ be a dynamical system of translation-bounded measures with diffraction $\widehat{\gamma}$. There is a bijective correspondence between:}\\


\textrm{(a)} \textit{Borel factor maps over some dynamical system of translation-bounded measures having diffraction $\widehat{\gamma}' \ll \widehat{\gamma}$, with Radon-Nikodym derivative $\frac{d \widehat{\gamma}'}{d \widehat{\gamma}} \in L^{\infty } (\widehat{\mathbb{G}}, \widehat{\gamma})$,}

\vspace{0.2cm}

\textrm{(b)} \textit{admissible operators $Q$ on $L^2(X,m)$.}\\

\textit{Given a Borel subset $\mathcal{P} \subseteq \widehat{\mathbb{G}}$, one has $\frac{d \widehat{\gamma}'}{d \widehat{\gamma}} = \mathbb{I}_{\mathcal{P}}$ if and only if $\vert Q \vert= \mathsf{E}(\mathcal{P})P_\Theta$, with $\mathsf{E}(\mathcal{P})$ the spectral projector associated to $\mathcal{P}$ and $P_\Theta$ an appropriate projector on $L^2(X,m)$.}\\

Therefore, existence of our desired Borel factors rests on the existence of admissible operators on $L^2(X,m)$ whose absolute value is given by an appropriate spectral projectors, something still tedious to prove due to the technicality of our admissibility condition. It is worth precising that this correspondence does not encompasses many interesting factors, as in some cases the knowledge of the diffraction of all factors allows to completely determine the dynamical spectrum, see the discussion in \cite{BaaLenVEn, BaaGahGri}.

\section{\textsf{Generalities on dynamical systems}}\label{Section:basics.dynamical.systems} 

We set here some notions, notations and results about abstract dynamical systems. All along this article $\mathbb{G}$ denotes a second countable locally compact Abelian (LCA) group, which is then also $\sigma$-compact and metrizable, with group operation noted additively, and with fixed Haar measure whose valuation on a Borel set $A$ is noted $\left| A \right|$. The integral with respect to the Haar measure will everywhere be noted $\int _{\mathbb{G}} ... \, dt$. Its Pontryagin dual $\widehat{\mathbb{G}}$ is also a second countable LCA group, whose group operation will be noted multiplicatively.

\subsection{\textsf{Dynamical systems and Borel factors}}\label{paragraph:dynamical.systems} ~ 

\vspace{0.2cm} 
By a \textit{dynamical system} we mean a pair $(X, \mathbb{G})$ consisting of a compact metrizable space $X$ together with a continuous $\mathbb{G}$-action (noted on the right)
\begin{align*}\xymatrixcolsep{3pc}\xymatrix{X \times \mathbb{G}  \ni (x, t) \,  \ar@{|->}[r] & \, x.t \in X }  
\end{align*}
Whenever $X$ is equipped with a $\mathbb{G}$-invariant (resp. ergodic) probability measure $m$ then the resulting triple $(X, \mathbb{G},m)$ will be called a measured (resp. ergodic) dynamical system. The topological support $Supp(m)$ of $m$ is the smallest compact subset of $X$ whose complementary set has measure zero, and $(X, \mathbb{G},m)$ is said to have full support whenever $Supp(m)=X$.

If $(X, \mathbb{G})$ is a dynamical system and $x \in X$ is chosen then we let $X_x$, the \textit{hull} of $x$, to be the closure of the $\mathbb{G}$-orbit of $x$ in $X$, giving rise to another dynamical system $(X_x, \mathbb{G})$ with restricted $\mathbb{G}$-action. A dynamical system $(X, \mathbb{G})$ is called minimal if $X_x =X$ for any $x\in X$. The following proposition is proved in \cite{FerKat}, Chapter $9$, Proposition $2.2.2$ therein:

\vspace{0.2cm} 

\begin{prop}\label{prop:ergodicity} In any ergodic system $(X, \mathbb{G},m)$ the set of point $ x \in X$ such that $X_x = Supp (m)$ is Borel of full measure in $X$.
\end{prop}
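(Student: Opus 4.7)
The plan is to decompose the equality $X_x = \mathrm{Supp}(m)$ into its two inclusions, verify each on a Borel set of full measure, and intersect.

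Write $Y := \mathrm{Supp}(m)$. Since $m$ is $\mathbb{G}$-invariant, $Y$ is closed and $\mathbb{G}$-invariant, so its complement is open and invariant; consequently the orbit of $x$, and hence its closure $X_x$, is contained in $Y$ if and only if $x \in Y$. This means the inclusion $X_x \subseteq Y$ is satisfied precisely on the Borel set $Y$, which has $m(Y)=1$.

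For the reverse inclusion I fix a countable basis $(U_n)_{n \in \mathbb{N}}$ for the topology of $X$ (available since $X$ is compact metrizable) and set $\mathcal{N} := \{n : U_n \cap Y \neq \emptyset\}$. For each $n$ I introduce the $\mathbb{G}$-invariant set
\[
A_n := \{x \in X : \text{the } \mathbb{G}\text{-orbit of } x \text{ meets } U_n\}.
\]
Taking a countable dense subset $D \subseteq \mathbb{G}$ and using joint continuity of the action, one rewrites $A_n = \bigcup_{t \in D} \{x : x.t \in U_n\}$, which presents $A_n$ as a countable union of open sets, hence as an open Borel subset of $X$. For $n \in \mathcal{N}$ one has $m(A_n) \geq m(U_n) > 0$, so by ergodicity $m(A_n) = 1$. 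A direct verification, using that the subfamily $(U_n)_{n \in \mathcal{N}}$ contains a local basis at every $y \in Y$, then shows that $Y \subseteq X_x$ if and only if $x \in A_n$ for all $n \in \mathcal{N}$.

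Combining the two steps, the set in the statement equals $Y \cap \bigcap_{n \in \mathcal{N}} A_n$, a countable intersection of Borel sets of full measure, therefore Borel and of full $m$-measure. The one point requiring care, which I would treat as the main obstacle, is the Borel measurability of the $A_n$: their natural definition is an uncountable union over $\mathbb{G}$, and it is precisely the standing second-countability assumption on $\mathbb{G}$ (noted at the start of this section) that lets one replace it by a countable union of open sets and thereby close the argument.
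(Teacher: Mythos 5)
Your proof is correct. The paper does not actually prove this proposition; it only cites \cite{FerKat} (Chapter 9, Proposition 2.2.2), so there is no in-text argument to compare against. Your self-contained argument is the standard one and all the steps check out: the inclusion $X_x \subseteq \mathrm{Supp}(m)$ holds exactly on the closed full-measure set $\mathrm{Supp}(m)$ (using that the support is $\mathbb{G}$-invariant by invariance of $m$); each $A_n$ is open and $\mathbb{G}$-invariant once rewritten as a countable union over a countable dense subset of $\mathbb{G}$ (this is where second countability of $\mathbb{G}$ and joint continuity of the action enter, as you correctly flag); $m(A_n)=1$ by ergodicity since $U_n\subseteq A_n$ and $U_n$ meets the support; and the characterization $Y\subseteq X_x \iff x\in\bigcap_{n\in\mathcal{N}}A_n$ follows because the $U_n$ with $n\in\mathcal{N}$ form a neighborhood basis at every point of $Y$. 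The intersection $Y\cap\bigcap_{n\in\mathcal{N}}A_n$ is therefore Borel of full measure, as required.
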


\vspace{0.2cm} 



Let us now consider two measured dynamical systems $(X, \mathbb{G},m)$ and $(X', \mathbb{G},m')$. A Borel map $\pi : X \longrightarrow  X'$ is called a Borel $\mathbb{G}$-map if for any fixed $t\in \mathbb{G}$ the set of points $x\in X$ such that $\pi (x.t)= \pi (x).t$ has full measure in $X$. It is called a Borel factor map if in addition $m'$ is the push-forward of $m$ by $\pi$, and in this case the system $(X', \mathbb{G},m')$ is called a Borel factor of $(X, \mathbb{G},m)$. Whenever $\pi $ is a Borel $\mathbb{G}$-map (resp. a factor map) then any Borel function coinciding with $\pi$ on a Borel set of full measure is again a $\mathbb{G}$-map (resp. a factor map). By \cite{FerKat}, Chapter $9$, Proposition $2.2.1$ therein, any $\mathbb{G}$-map $\pi : X \longrightarrow  X'$ is almost everywhere equal to a Borel $\mathbb{G}$-map $ \tilde{\pi } : X \longrightarrow  X'$ admitting a $\mathbb{G}$-stable Borel subset $\tilde{X}$ of full measure in $X$ such that $\tilde{\pi }(x.t)= \tilde{\pi }(x).t $ hold for any $ x\in \tilde{X }$ and any $t \in \mathbb{G}$. In particular one can deduce that a Borel factor of an ergodic dynamical system is also ergodic.\\

Let $(X, \mathbb{G}, m)$ be an ergodic dynamical system, and $(X', \mathbb{G}, m')$ some Borel factor with Borel factor map $\pi : X \longrightarrow  X'$. Denote $\mathcal{L}^{\infty}(X)$ the space of bounded complex-valued Borel functions on $X$. Then by the Disintegration Theorem the measure $m$ disintegrates over the factor $X'$ into a set of measures supported on fibers for the factor map $\pi$: Precisely, there exists a Borel map 
\begin{align*} \xymatrixcolsep{3pc}\xymatrix{ X' \ni x  \ar@{|->}[r] & \mu _{x} \in Prob (X) }
\end{align*}\vspace{0.2cm}
which satisfies the following two statements:

\vspace{0.2cm}
$(D1)$  The push-forward measure $\pi (\mu _x)= \delta _x$ for $m'$-almost every $x\in X'$,
 
$(D2)$  For each $f\in \mathcal{L}^{\infty}(X)$ one has $ \displaystyle \int _{X}f dm = \int _{x\in X'}\left[ \int _{X}f d\mu _x \right]  dm'(x)$.\\
\\
Moreover any two such maps coincide $m'$-almost everywhere on $X'$. The affirmation that a disintegration is a Borel map means that it is a Borel map when $Prob (X) $ is endowed with its vague topology, which exactly means that for each $f\in \mathcal{L}^{\infty}(X)$ the mapping associating to $x\in X'$ the value $ \int _{X}f d\mu _x $ is a Borel map. This also ensure that the right term of the equality in $(D2)$ is well defined. As a direct consequence of the almost everywhere uniqueness, a disintegration is always a $\mathbb{G}$-map in the following sense: As $\mathbb{G}$ acts on $X$ continuously it also acts on the space $\mathcal{M}^b(X)$ of bounded complex Radon measures of $X$ via the formula $\mu .t := \mu (.(-t))$, and leave stable the space $Prob (X)$ of probability measures on $X$. Then the almost everywhere uniqueness ensures that for any $t\in \mathbb{G}$ one has $\mu _{x.t}= (\mu _x ).t$ for $m'$-almost every $x\in X'$. Since any change of the disintegration map on a set of measure 0 in $X'$ does not violate $(D1)$ and $(D2)$ one can choose the disintegration to also satisfy\\

$(D3)$ One has $\mu _{x.t}= (\mu _x) .t$ for any $t\in \mathbb{G}$ and any $x$ in a full Borel subset $\tilde{X'}$.

\vspace{0.1cm}



%



\subsection{\textsf{Unitary representation and dynamical spectrum}} ~ 

\vspace{0.2cm} 

A measured dynamical system $(X, \mathbb{G}, m)$ gives naturally rise to a strongly continuous unitary representation of $\mathbb{G}$ on the separable Hilbert space $L^2(X,m)$, where on a function $h\in L^2(X,m)$ the unitary operator associated with $t\in \mathbb{G}$ reads $ U_t(h)= h(.(-t))$. From Stone's theorem (\cite{Loomis}, Section 36D, or Volume 2 of \cite{FellDoran}, Section 10.2 therein) there is an associated \textit{projection-valued measure} on the Borel sets of $\widehat{\mathbb{G}}$,
\begin{align*} \mathsf{E}: \text{Borel sets of }\widehat{\mathbb{G}} \;   \longrightarrow \text{Projectors of }L^2(X,m)
\end{align*}

that is to say, a map such that $\mathsf{E}(\mathcal{P})$ is an orthogonal projector on $L^2(X,m)$ for each Borel subset $\mathcal{P} \subseteq \widehat{\mathbb{G}}$, with $\mathsf{E}(\emptyset)$ the trivial operator and $\mathsf{E}(\widehat{\mathbb{G}})$ the identity operator, and which is $\sigma$-additive on the Borel sets of $\widehat{\mathbb{G}}$. This projection-valued measure is uniquely characterized according to the following property: For any two $h,k \in L^2(X,m)$ the scalar product $ \langle h,\mathsf{E}(.)k\rangle $ sets a bounded complex measure on $\widehat{\mathbb{G}}$, such that one has
\begin{align}\label{fourier} \langle h,U_t k\rangle = \int _{\omega \in \widehat{\mathbb{G}}}\omega (t)\, d\langle h,\mathsf{E}(\omega)k\rangle
\end{align}

In other words $ \langle h,\mathsf{E}(.)k\rangle $ is the unique bounded complex measure on $\widehat{\mathbb{G}}$ whose Fourier transform is the continuous bounded function on $\mathbb{G}$ mapping $t$ to $\langle h,U_t k\rangle$. A projector of the form $\mathsf{E}(\mathcal{P})$ is called a \textit{spectral projector}. One can deduces from equality (\ref{fourier}) that a bounded operator commuting with any operators $U_t$ also commutes with any spectral projectors. In particular the spectral projectors themselves commute with any $U_t$. As a result, the range of any spectral projector is a closed $\mathbb{G}$-stable subspace of $L^2(X,m)$.\\

For each $\omega \in \widehat{\mathbb{G}}$ we simply denote $\mathsf{E}(\omega )$ the spectral projector associated with the singleton $\lbrace \omega \rbrace $. As a consequence of (\ref{fourier}), the range of $\mathsf{E}(\omega )$ exactly consists of the \textit{eigenfunctions} of the system $(X, \mathbb{G}, m)$ with eigenvalue $\omega$, that is to say, functions $h \in L^2(X,m)$ such that $U_t(h)= \omega (t) h$ as $L^2$-functions for any $t \in \mathbb{G}$. A $\omega \in \widehat{\mathbb{G}}$ is called an \textit{eigenvalue} of the system $(X, \mathbb{G}, m)$ if it admits a non-trivial eigenfunction, that is, if the spectral projector $\mathsf{E}(\omega )$ is non-trivial. A measure dynamical system $(X, \mathbb{G}, m)$ is said to have \textit{pure point dynamical spectrum} if the eigenvalues span $ L^2(X,m)$, or equivalently, if the associated projection-valued measure $\mathsf{E}$ is purely atomic.


\subsection{\textsf{The maximal Kronecker Borel factor of an ergodic dynamical system}} ~ 

\vspace{0.2cm} 

When the measured dynamical system $(X, \mathbb{G}, m)$ is ergodic each eigenvalue $\omega$ has a unique eigenfunction up to a multiplicative constant, that is, the range of $\mathsf{E}(\omega )$ is 1-dimensional in $ L^2(X,m)$, and moreover any such eigenfunction is almost everywhere constant in modulus and thus belongs to $ L^\infty(X,m)$. Therefore the product of an eigenfunction associated with $\omega$ with another associated with $\omega '$ is well-defined, and is an eigenfunction associated with $\omega . \omega '$. Hence the set of eigenvalues of an ergodic dynamical system $(X, \mathbb{G}, m)$ always forms a (countable) subgroup $\mathcal{E}$ of $\widehat{\mathbb{G}}$.\\

Consider now any subgroup $\mathcal{E}$ of $\widehat{\mathbb{G}}$, endowed with the discrete topology for which it is a LCA group. As discrete Abelian group, it admits a Pontryagin dual T$_{ \mathcal{E} }$ which is a compact Abelian group, on which $\mathbb{G}$ acts \textit{by rotation}: The natural continuous injection morphism of $\mathcal{E}$ in $\widehat{\mathbb{G}}$ leads by Pontryagin dualization a continuous group morphism
\begin{align*} \xymatrixcolsep{3pc}\xymatrix{\mathsf{i}: \mathbb{G} \ar[r] &  \text{T}_{ \mathcal{E} }}
\end{align*}

with dense range, which in turns define a $\mathbb{G}$-action (set on the right) by $z.t:= z.\mathsf{i}(t)$. The resulting dynamical system $(\text{T}_{ \mathcal{E} }, \mathbb{G})$ is then a minimal \textit{Kronecker system}, and the normalized Haar measure on $\text{T}_{ \mathcal{E} }$ is the unique invariant measure with respect to this action, providing so the ergodic dynamical system $(\text{T}_{ \mathcal{E} }, \mathbb{G}, m_{Haar})$.

\vspace{0.2cm} 

\begin{prop}\label{prop.construction.Borel.factor.map} Let $(X, \mathbb{G}, m)$ be an ergodic dynamical system and let $\mathcal{E}\subset \widehat{\mathbb{G}}$ its eigenvalue group. Then $(\text{T}_{ \mathcal{E} }, \mathbb{G}, m_{Haar})$ is a Borel factor of $(X, \mathbb{G}, m)$.
\end{prop}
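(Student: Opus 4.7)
The plan is to construct a Borel map $\pi \colon X \to \text{T}_{\mathcal{E}}$ from a coherent family of unit-modulus eigenfunctions, verify that it is a Borel $\mathbb{G}$-map, and identify its push-forward with the normalized Haar measure via Fourier analysis on the compact Abelian group $\text{T}_{\mathcal{E}}$.

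First I build the section of eigenfunctions. By ergodicity of $(X,\mathbb{G},m)$, each $\omega \in \mathcal{E}$ admits an essentially unit-modulus eigenfunction, unique in $L^{\infty}(X,m)$ up to multiplication by an element of $\mathbb{T}$. The group $K$ of $m$-almost-everywhere equivalence classes of such eigenfunctions, under pointwise multiplication, thus fits into an exact sequence of abelian groups
\begin{equation*}
1 \longrightarrow \mathbb{T} \longrightarrow K \longrightarrow \mathcal{E} \longrightarrow 1.
\end{equation*}
Because $\mathbb{T}$ is divisible it is injective as a $\mathbb{Z}$-module, hence $\mathrm{Ext}^{1}(\mathcal{E}, \mathbb{T}) = 0$ and the sequence splits. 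A splitting is precisely a group homomorphism $\omega \mapsto f_{\omega}$ with $f_{\omega \omega'} = f_{\omega} \cdot f_{\omega'}$ holding $m$-almost everywhere, normalized so that $f_{1} \equiv 1$.

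Next I define $\pi \colon X \to \text{T}_{\mathcal{E}}$ by $\pi(x)(\omega) := \overline{f_{\omega}(x)}$ (the conjugation compensates for the paper's convention $U_{t} h = h(\,\cdot\,(-t))$, which makes an eigenfunction satisfy $f_{\omega}(x.t) = \omega(t)^{-1} f_{\omega}(x)$). Countability of $\mathcal{E}$ is essential: intersecting the full-measure sets indexed by pairs $(\omega,\omega')$ yields a full-measure Borel set on which $\pi(x)$ is genuinely multiplicative, hence is a character of $\mathcal{E}_{d}$, that is, an element of $\text{T}_{\mathcal{E}}$. Extending $\pi$ arbitrarily outside this set, the map is Borel because $\text{T}_{\mathcal{E}}$ carries the (metrizable) product topology coming from its embedding in $\mathbb{T}^{\mathcal{E}}$ and each coordinate $x \mapsto \overline{f_{\omega}(x)}$ is Borel. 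For the $\mathbb{G}$-equivariance, I fix $t \in \mathbb{G}$: the identity $f_{\omega}(x.t) = \omega(t)^{-1} f_{\omega}(x)$ holds $m$-almost everywhere for each $\omega$, hence simultaneously for all $\omega \in \mathcal{E}$ on a full-measure set, and this reads $\pi(x.t)(\omega) = \omega(t)\, \pi(x)(\omega) = (\pi(x).t)(\omega)$ using the paper's definition $(z.t)(\omega) = z(\omega)\, \omega(t)$ of the $\mathbb{G}$-action on $\text{T}_{\mathcal{E}}$.

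Finally I identify the push-forward. Since the Pontryagin dual of $\text{T}_{\mathcal{E}}$ is $\mathcal{E}_{d}$, computing the Fourier coefficients of $\pi_{*}m$ yields, for each $\omega \in \mathcal{E}$,
\begin{equation*}
\int_{\text{T}_{\mathcal{E}}} \omega(z)\, d(\pi_{*} m)(z) = \int_{X} \overline{f_{\omega}(x)}\, dm(x),
\end{equation*}
which equals $1$ when $\omega$ is trivial and vanishes otherwise: indeed, $\mathbb{G}$-invariance of $m$ combined with $U_{t} f_{\omega} = \omega(t) f_{\omega}$ forces $\int f_{\omega}\, dm = \omega(t) \int f_{\omega}\, dm$ for every $t \in \mathbb{G}$, hence the integral is zero whenever $\omega \neq 1$. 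These are exactly the Fourier coefficients of the normalized Haar measure on $\text{T}_{\mathcal{E}}$, so $\pi_{*} m = m_{Haar}$ by uniqueness of the Fourier transform of bounded measures, completing the verification that $(\text{T}_{\mathcal{E}}, \mathbb{G}, m_{Haar})$ is a Borel factor. The main obstacle of the argument is the very first step: arbitrary unit-modulus representatives of eigenfunctions only multiply up to a $\mathbb{T}$-valued $2$-cocycle on $\mathcal{E}$, and the entire construction hinges on trivializing this cocycle, something made possible precisely by the divisibility of $\mathbb{T}$.
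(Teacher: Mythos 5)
Your proof is correct, and it reaches the crucial intermediate step — a family of unit-modulus eigenfunctions $(f_\omega)_{\omega\in\mathcal{E}}$ that is genuinely multiplicative, $f_{\omega\omega'}=f_\omega f_{\omega'}$ almost everywhere — by a genuinely different route from the paper. The paper proves this as a lemma using the Pointwise Ergodic Theorem: it picks a generic point $x_0$ (generic for the countable family of functions $\vert\lambda_1\Phi_\omega\Phi_{\omega'}-\lambda_2\Phi_{\omega\omega'}\vert$ along a tempered Van Hove sequence), renormalizes each eigenfunction by $\overline{\Phi_\omega(x_0)}$ so that multiplicativity holds exactly along the orbit of $x_0$, and then concludes that the $L^1$-norm of the defect vanishes because it equals the orbital average at $x_0$. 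You instead observe that the unit-modulus eigenfunctions form an abelian extension $1\to\mathbb{T}\to K\to\mathcal{E}\to 1$ and that divisibility of $\mathbb{T}$ (injectivity as a $\mathbb{Z}$-module, so $\mathrm{Ext}^1(\mathcal{E},\mathbb{T})=0$) forces the extension to split; a splitting is exactly the desired coherent section. Your argument is shorter and avoids the ergodic theorem and any choice of averaging sequence, at the price of being non-constructive; the paper's argument is heavier but produces an explicit normalization ($\Phi_\omega(x_0)=1$ for all $\omega$), which pins down $\pi$ concretely and is convenient when the map is later compared with the torus parametrization. Your identification of the push-forward via Fourier–Stieltjes coefficients (the integral $\int f_\omega\,dm$ vanishes for $\omega\neq 1$ by invariance of $m$) is also a clean alternative to the paper's argument that $\pi_*m$ is invariant under the dense subgroup $\mathsf{i}(\mathbb{G})$ and hence equals Haar measure; both are standard. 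The only points worth making explicit in a final write-up are the choice of everywhere-defined Borel representatives of the classes $f_\omega$ with $\vert f_\omega\vert=1$ pointwise on the full-measure set (countably many choices, so harmless), and the sign convention you already flagged, which at worst replaces $\pi$ by its composition with inversion on $\text{T}_{\mathcal{E}}$ and does not affect the statement.
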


\vspace{0.2cm} 

For $(X, \mathbb{G}, m)$ an ergodic dynamical system with eigenvalue group $\mathcal{E}$, the Borel factor $(\text{T}_{ \mathcal{E} }, \mathbb{G}, m_{Haar})$ associated with $\mathcal{E}$ is called its \textit{maximal Kronecker Borel factor} \cite{}. The term "maximal" refers to the fact that any Kronecker Borel factor of $(X, \mathbb{G}, m)$ factors through $(\text{T}_{ \mathcal{E} }, \mathbb{G}, m_{Haar})$. The celebrated Halmos-Von Neumann Theorem asserts that an ergodic dynamical system has pure point dynamical spectrum if and only if it is Borel conjugated with its maximal Kronecker Borel factor.\\


Given an ergodic dynamical system $(X, \mathbb{G}, m)$ with eigenvalue group $\mathcal{E}$, any Borel factor map $\pi$ onto its maximal Kronecker Borel factor $(\text{T}_{ \mathcal{E} }, \mathbb{G}, m_{Haar})$ induces a $\mathbb{G}$-commuting isometric embedding of Hilbert spaces
\begin{align*} \xymatrixcolsep{3pc}\xymatrix{ L^2\left( \text{T}_{ \mathcal{E}}, m_{Haar}\right)  \ni h \,  \ar@{|->}[r] &  h \circ \pi \in L^2(X, m) }  
\end{align*}

It maps a continuous character on $\text{T}_{ \mathcal{E}}$, which corresponds to some $\omega \in \mathcal{E}$ by Pontryagin duality, to a normalized eigenfunction of the system whose eigenvalue is $\omega$. As a byproduct, its range is precisely the closed $\mathbb{G}$-stable Hilbert subspace $\mathcal{H}_{\mathrm{p}}$ of $L^2(X, m)$ spanned by the eigenfunctions of the system. The orthogonal projection onto this subspace is the spectral projector $\mathsf{E}(\mathcal{E})$ associated to the countable subset $\mathcal{E}\subset \widehat{\mathbb{G}}$, and can be linked with the disintegration theorem applied to the factor map $\pi$ over $\text{T}_{ \mathcal{E}}$ (regardless the choice of $\pi$, which is essentially unique):

\begin{prop}\label{prop:projection.and.disintegration} $\mathsf{E}(\mathcal{E})(f)$ is equal, for any $f\in \mathcal{L}^2(X, m)$, to the $L^2$-class of
\begin{align*}  \xymatrixcolsep{3.5pc}\xymatrix{ x  \ar@{|->}[r] &\displaystyle \int _X f \, d \mu _{\pi (x)} } 
\end{align*}
\end{prop}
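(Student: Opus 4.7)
The plan is to introduce the bounded operator $P$ on $L^2(X,m)$ defined by $(Pf)(x) := \int_X f \, d\mu_{\pi(x)}$, show that $P$ is the orthogonal projection onto the subspace $\mathcal{H}_{\mathrm{p}}$ spanned by eigenfunctions, and conclude by the identification of $\mathsf{E}(\mathcal{E})$ with that projection already recorded in the paragraph preceding the statement.

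First I would verify that $P$ defines a contraction of $L^2(X,m)$. For $f \in \mathcal{L}^2(X,m)$, Cauchy--Schwarz applied to the probability measure $\mu_{\pi(x)}$ gives the pointwise bound $|(Pf)(x)|^2 \leq \int_X |f|^2 \, d\mu_{\pi(x)}$; integrating against $m$ and invoking property $(D2)$ yields $\|Pf\|_{L^2}^2 \leq \|f\|_{L^2}^2$. Applying the same inequality to differences shows that $P$ is well defined on $L^2$-classes, not merely on $\mathcal{L}^2$.

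Next I would check that $P$ is a self-adjoint idempotent. Expanding $\langle Pf, g\rangle$ via $(D2)$, and using $(D1)$ to replace $(Pf)(x)$ by the constant value $\int_X f \, d\mu_y$ on $\mu_y$-almost every fiber $\pi^{-1}(y)$, produces the symmetric expression
\begin{equation*}
\langle Pf,g\rangle \;=\; \int_{\mathrm{T}_{\mathcal{E}}} \Bigl(\int_X f \, d\mu_y\Bigr)\,\overline{\Bigl(\int_X g\, d\mu_y\Bigr)}\, dm_{Haar}(y),
\end{equation*}
from which both $\langle Pf, g\rangle = \langle f, Pg\rangle$ and, specializing to $g=Pf$, the identity $P^2 = P$ follow; commutation of $P$ with the $\mathbb{G}$-action then falls out of $(D3)$.

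Finally I would identify $\mathrm{Range}(P)$. By construction $Pf = (Tf)\circ \pi$, where $Tf(y) := \int_X f\, d\mu_y$ lies in $L^2(\mathrm{T}_{\mathcal{E}}, m_{Haar})$, so $\mathrm{Range}(P)$ is contained in the image of the isometric embedding $h \mapsto h\circ \pi$. Conversely, for any $h\in L^2(\mathrm{T}_{\mathcal{E}}, m_{Haar})$, property $(D1)$ gives $P(h\circ\pi)(x) = h(\pi(x))$ for $m$-almost every $x$, so $P$ acts as the identity on this image. Hence $\mathrm{Range}(P)$ coincides with that image, which by the discussion preceding the proposition is exactly $\mathcal{H}_{\mathrm{p}}$. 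Since two orthogonal projections with the same range coincide, $P = \mathsf{E}(\mathcal{E})$. The main technical point --- rather than an obstacle --- is the consistent handling of the various ``for almost every $y$'' qualifiers arising from $(D1)$--$(D3)$, which is routine.
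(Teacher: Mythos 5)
Your proof is correct, and it is precisely the argument the paper leaves implicit: the preceding paragraph already records that $h\mapsto h\circ\pi$ has range $\mathcal{H}_{\mathrm{p}}$ and that $\mathsf{E}(\mathcal{E})$ is the orthogonal projection onto $\mathcal{H}_{\mathrm{p}}$, and you correctly verify via $(D1)$--$(D2)$ that the disintegration operator $P$ is the orthogonal projection with that same range. The only point worth a word in a write-up is that $(D2)$ is stated for bounded Borel functions, so its use on $|f|^2$ and on $f\overline{g}$ for $f,g\in\mathcal{L}^2(X,m)$ requires the routine truncation/monotone-convergence extension you allude to.
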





\section{\textsf{Translation-bounded measures}}\label{Section:basics} 

\subsection{\textsf{Basics on Fourier analysis}}\label{Par:basics.fourier} ~ 

\vspace{0.2cm} 

Here we mainly follow \cite{BaaLen, Sch}. The $\mathbb{C}$-vector space of compactly supported complex valued continuous functions on $\mathbb{G}$ will be denoted $\mathcal{C}_c(\mathbb{G})$, and is topologized as follows: For each compact subset $K\subset \mathbb{G}$ let $\mathcal{C}_K(\mathbb{G})$ be the subspace of functions supported on $K$ equipped with the suppremum norm $\Vert . \Vert _{\infty}$, for which it is a Banach space. Then let $(K_N)_N$ be a nested sequence of compact regular sets in $\mathbb{G}$ whose interiors cover $\mathbb{G}$: It gives $\mathcal{C}_c(\mathbb{G}) = \cup _{N} \mathcal{C}_{K_N}(\mathbb{G})$ and defines an inductive limit topology $\mathcal{T}_{lim}$ on $\mathcal{C}_c(\mathbb{G})$ which is the weakest making the natural inclusions
\vspace{0.1cm} 
\begin{align*}\xymatrixcolsep{2pc}\xymatrix{(\mathcal{C}_{K_N}(\mathbb{G}), \Vert . \Vert _{\infty}) \,  \ar@{^{(}->}[r] & (\mathcal{C}_c(\mathbb{G}), \mathcal{T}_{lim}) }  
\end{align*}

\vspace{0.1cm} 
continuous. This topology does not depend on the choice of nested sequence $(K_N)_N$, and the resulting space $(\mathcal{C}_c(\mathbb{G}), \mathcal{T}_{lim})$ is a separable topological vector space. The space $\mathcal{C}_c(\mathbb{G})$ is closed under the operations $\phi _-:= \phi (-.)$ and $\widetilde{\phi }:= \overline{\phi (-.)}$, and under convolution product $\phi * \psi (t) := \int _{\mathbb{G}} \phi (s) \psi (t-s) \, ds$. One has the usual Fourier transform (\cite{Rudin})
\begin{align*}\xymatrixcolsep{3pc}\xymatrix{\mathcal{C}_c(\mathbb{G}) \ni \phi \ar@{|->}[r] &  \widehat{\phi} \in \mathcal{C}_0(\widehat{\mathbb{G}})} \qquad \text{with} \qquad \widehat{\phi}(\omega ) := \displaystyle \int _{\mathbb{G}} \phi (t) \overline{\omega (t)}\, dt
\end{align*}

The space $\mathcal{C}_0(\widehat{\mathbb{G}})$ of continuous complex valued functions on $\widehat{\mathbb{G}}$ null at infinity is closed under pointwise product, and one has the relations $\widehat{(\phi _-)} = (\widehat{\phi })_- $, $\widehat{\widetilde{\phi}} = \overline{\widehat{\phi}}$ and $\widehat{\phi * \psi} = \widehat{\phi}. \widehat{\psi} $ . We now turn to complex Borel measures on $\mathbb{G}$. The dual space $\mathcal{C}_c(\mathbb{G})^*$ of $\mathcal{C}_c(\mathbb{G})$ is by construction of the topology $\mathcal{T}_{lim}$ the space of linear functionals on $\mathcal{C}_c(\mathbb{G})$ which are bounded on each subspace $(\mathcal{C}_{K_N}(\mathbb{G}), \Vert . \Vert _{\infty})$ (with a constant depending on $K_N$), and is precisely given, according to the Riesz-Markov representation Theorem (\cite{ReedSimon}, Theorem IV. 18 therein), by the space $\mathcal{M}(\mathbb{G})$ of (possibly unbounded) complex Borel measures on $\mathbb{G}$, where a functional in $\mathcal{C}_c(\mathbb{G})^*$ is given by the integral against a measure in $\mathcal{M}(\mathbb{G})$. The vague topology $\mathcal{T}_{vague}$ on $\mathcal{M}(\mathbb{G})$ is the weak-$*$ topology on $\mathcal{C}_c(\mathbb{G})^*$ after identification, which is the weakest topology making the functions 
\begin{align}\label{elemenetary.functions}\xymatrixcolsep{3pc}\xymatrix{ \Lambda \;  \ar@{|->}[r] & \; \displaystyle \mathcal{N}_\phi (\Lambda  ) :=  \int _\mathbb{G} \, \phi _-\, d\Lambda = \int _\mathbb{G} \, \phi (-s)\, d\Lambda (s) }  
\end{align}

continuous on $\mathcal{M}(\mathbb{G})$ for each $\phi \in \mathcal{C}_c(\mathbb{G})$. Hence defined the space $(\mathcal{M}(\mathbb{G}), \mathcal{T}_{vague})$ is a locally convex topological vector space. It carries a natural $\mathbb{G}$-action defined for $t\in \mathbb{G}$ by $\xymatrixcolsep{2pc}\xymatrix{ \Lambda \,  \ar@{|->}[r] &  \Lambda * \delta _t } $, where the convolution product of two convolable measures is determined on $\phi \in \mathcal{C}_c(\mathbb{G})$ by $\int _{\mathbb{G}} \phi \, d\Lambda * \Lambda'  := \int _{\mathbb{G}} \int _{\mathbb{G}} \phi (s+t) \, d\Lambda (s) \, d \Lambda '(t)$. There is a natural involution on $\Lambda \in \mathcal{M}(\mathbb{G})$ set by $\widetilde{\Lambda}(\phi) = \overline{\Lambda(\widetilde{\phi})}$ for $\phi \in \mathcal{C}_c(\mathbb{G})$.


\subsection{\textsf{Dynamical systems of translation-bounded measures}}\label{Par:translation.bounded.measures} ~ 

\vspace{0.2cm}

We denote below $(K,M)$ to be a pair with $K \subset \mathbb{G}$ compact of non-empty interior and $M\geqslant 0$. A complex Borel measure $\Lambda\in \mathcal{M}(\mathbb{G})$ is called $(K,M)$-translation-bounded if
\vspace{0.1cm} 
\begin{align*}  \left| \Lambda \right| (K+t)\leqslant M \qquad \text{for each } \; t\in \mathbb{G}
\end{align*}

\vspace{0.1cm} 
Here, $\left| \Lambda \right|$ is the absolute value, or total variation of $\Lambda$, which is the least positive complex Borel measure on $\mathbb{G}$ such that $\left| \Lambda (B) \right| \leqslant\left| \Lambda \right|(B)$ for any Borel $B$. A complex Borel measure in $ \mathcal{M}(\mathbb{G})$ is called translation-bounded if it is $(K,M)$-translation-bounded for some $(K,M)$. The collection of $(K,M)$-translation-bounded measures on $\mathbb{G}$ will be denoted $\mathcal{M}_{(K,M)}(\mathbb{G})$, and that of translation-bounded measures $\mathcal{M}^{\infty}(\mathbb{G})$. It is clear that $\mathcal{M}^{\infty}(\mathbb{G})$, when endowed with the vague topology, is a topological vector subspace of $ \mathcal{M}(\mathbb{G})$ that remains stable under the natural $\mathbb{G}$-action.

\vspace{0.2cm} 

\begin{prop}\label{prop.convexity} \cite{BaaLen} Each set $\mathcal{M}_{(K,M)}(\mathbb{G})$ is a metrizable compact convex subset of $\mathcal{M}^{\infty}(\mathbb{G})$, on which the natural $\mathbb{G}$-action is well defined and continuous.
\end{prop}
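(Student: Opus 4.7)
The plan is to verify in turn the five assertions packaged into the statement: convexity, stability under the action, relative compactness, vague closedness, and continuity of the action. The first two are immediate. Convexity follows from the subadditivity of total variation: for $\Lambda_1,\Lambda_2\in \mathcal{M}_{(K,M)}(\mathbb{G})$ and $s\in[0,1]$, $|s\Lambda_1+(1-s)\Lambda_2|\leq s|\Lambda_1|+(1-s)|\Lambda_2|$ pointwise on Borel sets. Stability is the computation $|\Lambda*\delta_t|(K+s)=|\Lambda|(K+s-t)\leq M$.

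For compactness, I would obtain a uniform operator-norm bound on each Banach piece $\mathcal{C}_{K_N}(\mathbb{G})$ and apply Banach--Alaoglu. The key observation is that any compact $K'\subset\mathbb{G}$ can be covered by finitely many, say $N(K')$, translates of $K$ (using that $K$ has nonempty interior), so that $|\Lambda|(K')\leq N(K')\cdot M$ uniformly for $\Lambda\in\mathcal{M}_{(K,M)}(\mathbb{G})$. Applied to $K_N$, this yields $|\mathcal{N}_\phi(\Lambda)|\leq N(K_N)M\,\Vert\phi\Vert_\infty$ for every $\phi\in\mathcal{C}_{K_N}(\mathbb{G})$, whence Banach--Alaoglu on each $\mathcal{C}_{K_N}(\mathbb{G})^{*}$ (combined with a diagonal argument over $N$) produces relative vague compactness. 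For closedness, suppose $\Lambda_n\to\Lambda$ vaguely with $\Lambda_n\in\mathcal{M}_{(K,M)}(\mathbb{G})$; for any $\phi\in\mathcal{C}_c(\mathrm{int}(K+t))$ with $\Vert\phi\Vert_\infty\leq 1$ one has $|\Lambda(\phi)|=\lim|\Lambda_n(\phi)|\leq\lim|\Lambda_n|(K+t)\leq M$, and taking the supremum over such $\phi$ together with the regularity of $|\Lambda|$ yields $|\Lambda|(K+t)\leq M$. Metrizability is then a formal consequence of compactness plus separability of $\mathcal{C}_c(\mathbb{G})$ (inherited from second countability of $\mathbb{G}$): a countable dense family $(\phi_n)$ in $\mathcal{C}_c(\mathbb{G})$ embeds the compact set continuously and injectively into a countable product of bounded discs in $\mathbb{C}$, and such a continuous injection is a homeomorphism onto its image.

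The subtle step is joint continuity of the action. Fix $(\Lambda_0,t_0)$ and take $(\Lambda_n,t_n)\to(\Lambda_0,t_0)$ in $\mathcal{M}_{(K,M)}(\mathbb{G})\times\mathbb{G}$, together with any $\phi\in\mathcal{C}_c(\mathbb{G})$. Rewriting $\mathcal{N}_\phi(\Lambda*\delta_t)=\mathcal{N}_{\phi^{(t)}}(\Lambda)$ for a suitable translate $\phi^{(t)}$ of $\phi$, the triangle inequality gives
\[|\mathcal{N}_{\phi^{(t_n)}}(\Lambda_n)-\mathcal{N}_{\phi^{(t_0)}}(\Lambda_0)|\leq|\mathcal{N}_{\phi^{(t_n)}-\phi^{(t_0)}}(\Lambda_n)|+|\mathcal{N}_{\phi^{(t_0)}}(\Lambda_n)-\mathcal{N}_{\phi^{(t_0)}}(\Lambda_0)|.\]
The second term tends to $0$ by vague convergence of the $\Lambda_n$. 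For the first, for $t_n$ close to $t_0$ all functions $\phi^{(t_n)}$ share a common compact support $K'$, and $\Vert\phi^{(t_n)}-\phi^{(t_0)}\Vert_\infty\to0$ by uniform continuity of $\phi$; the uniform bound $|\mathcal{N}_\psi(\Lambda_n)|\leq N(K')M\,\Vert\psi\Vert_\infty$ established in the previous step then forces the first term to zero as well.

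The principal obstacle I anticipate is precisely this continuity step: vague convergence of measures alone is too weak to pair with a variable test function, and it is the uniform operator-norm control coming from translation-boundedness, i.e.\ the hypothesis $\Lambda_n\in\mathcal{M}_{(K,M)}(\mathbb{G})$ with a \emph{common} constant $M$, that converts vague convergence into the required equicontinuity. A secondary technical point is making the inner-regularity argument yielding $|\Lambda|(K+t)\leq M$ (and not merely $|\Lambda|(\mathrm{int}(K+t))\leq M$) airtight when $K+t$ carries a nontrivial boundary; I would handle this by exhausting $K+t$ from inside by open sets compatible with test functions and exploiting translation-invariance of the bound across nearby $t$.
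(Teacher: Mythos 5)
Your overall strategy is sound and, apart from the one-line convexity argument, goes well beyond what the paper does: the paper simply quotes Theorem 2 and Proposition 2 of the cited reference [BaaLen] for compactness, metrizability and continuity of the action, and only supplies the total-variation inequality $\vert \alpha\Lambda + (1-\alpha)\Lambda '\vert \leqslant \alpha\vert\Lambda\vert + (1-\alpha)\vert\Lambda '\vert$ for convexity. Your covering bound $\vert\Lambda\vert(K')\leqslant N(K')M$, the Alaoglu/diagonal compactness step, the metrizability argument via a countable separating family of test functions, and the equicontinuity argument for joint continuity of the action are all correct and are essentially the arguments underlying the cited result.

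The genuine gap is the point you relegate to a ``secondary technical point'': closedness. With the defining inequality $\vert\Lambda\vert(K+t)\leqslant M$ taken over the \emph{compact} set $K$ itself, the set $\mathcal{M}_{(K,M)}(\mathbb{G})$ is in general not vaguely closed, and no inner-regularity or nearby-translate trick can repair this. Concretely, in $\mathbb{G}=\mathbb{R}$ with $K=[0,1]$ and $M=\tfrac{1}{2}$, the measures $\Lambda_n=\tfrac{1}{2}\delta_{-1/n}+\tfrac{1}{2}\delta_{1+1/n}$ satisfy $\vert\Lambda_n\vert([t,t+1])\leqslant \tfrac{1}{2}$ for every $t$ (no interval of length $1$ can contain both atoms), yet their vague limit $\Lambda=\tfrac{1}{2}\delta_0+\tfrac{1}{2}\delta_1$ has $\vert\Lambda\vert([0,1])=1$. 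Your test-function argument only ever yields $\vert\Lambda\vert(U)\leqslant M$ for open $U\subseteq \mathrm{int}(K)+t$, hence $\vert\Lambda\vert(\mathrm{int}(K)+t)\leqslant M$, and the example shows this is the best possible conclusion: the boundary can genuinely pick up extra mass in the limit. The correct fix, and the form in which the compactness theorem is actually stated in [BaaLen], is to define translation-boundedness with an open relatively compact set $U$ in place of $K$; then $\Lambda\mapsto\vert\Lambda\vert(t+U)$ is a supremum of vaguely continuous functionals, hence lower semicontinuous, so its sublevel sets are closed and your Alaoglu argument closes up. (Equivalently, one may note that the vague closure of $\mathcal{M}_{(K,M)}(\mathbb{G})$ sits inside $\mathcal{M}_{(\mathrm{int}(K),M)}(\mathbb{G})$ and work with the latter.) Everything else in your argument goes through verbatim once this adjustment is made; the defect lies as much in the proposition's formulation as in your proof.
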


\vspace{0.2cm} 

Compactness and metrizability is proved in \cite{BaaLen}, Theorem $2$, and continuity of the $\mathbb{G}$-action in Proposition $2$ therein. The convexity statement is ensured by the inequality $\left| \alpha\Lambda + (1-\alpha)\Lambda ' \right| \leqslant \alpha \left| \Lambda \right| + (1-\alpha) \left| \Lambda ' \right|$ holding for any two measures $\Lambda , \Lambda '$ and $\alpha\in [ 0, 1]$.

\vspace{0.2cm} 

\begin{de} A measured dynamical system of translation-bounded measures is a measured dynamical system $(X, \mathbb{G},m)$, with $X$ a compact subset of some $\mathcal{M}_{(K,M)}(\mathbb{G})$ and the $\mathbb{G}$-action given by $\xymatrixcolsep{1.8pc}\xymatrix{X \times \mathbb{G} \ni  (\Lambda , t) \,  \ar@{|->}[r] &  \Lambda * \delta _t \in X}$.
\end{de}

\vspace{0.2cm} 

A way to generate dynamical systems of translation-bounded measures is to start with a particular measure $\mathsf{\Lambda}$ in some $\mathcal{M}_{(K,M)}(\mathbb{G})$, and consider the closure $X_{\mathsf{\Lambda}}$ of its $\mathbb{G}$-orbit in $\mathcal{M}_{(K,M)}(\mathbb{G})$ which we shall call its \textit{hull}. Once a $\mathbb{G}$-invariant probability measure $m$ on $X_{\mathsf{\Lambda}}$ is chosen one ends up with a system of the desired form. To a translation-bounded measure can be associated as in \cite{BaaLen} its \textit{rubber local isomorphism class}, or RLI-class, to be the collection of all translation-bounded measures locally isomorphic with $\Lambda$,
\vspace{0.1cm} 
\begin{align}\label{RLI.class} \text{RLI}(\mathsf{\Lambda}) := \left\lbrace \Lambda \in \mathcal{M}^{\infty}(\mathbb{G}) \, \vert \, X_\Lambda = X_{\mathsf{\Lambda}}\right\rbrace 
\end{align}

\vspace{0.1cm} 
It is not hard to show that the RLI-class of a translation-bounded measure $\mathsf{\Lambda}$ is a Borel $\mathbb{G}$-stable subset of $\mathcal{M}^{\infty}(\mathbb{G})$ (it is even a $G_\delta$), and moreover one clearly has $\text{RLI}(\mathsf{\Lambda}) \subseteq X_{\mathsf{\Lambda}}$. Given a dynamical system of translation-bounded measures $(X, \mathbb{G},m)$, it is clear that whenever $\Lambda$ belongs to $X$ then the entire class $\text{RLI}(\Lambda)$ also belongs to $X$, so that $X$ always partitions into disjoint RLI-classes. Obviously $(X, \mathbb{G})$ is minimal if and only if $X$ contains a single RLI-class. If $(X, \mathbb{G},m)$ is ergodic, one knows by Proposition \ref{prop:ergodicity} that almost all $\Lambda \in X$ have the support of $m$ as hull. These elements form, as it is easy to remark, a single RLI-class in $X$, yielding:



\vspace{0.2cm} 

\begin{prop} An ergodic system of translation-bounded measures $(X, \mathbb{G},m)$ admits a single \textsc{RLI}-class of full $m$-measure in $X$, formed of the $\Lambda \in X$ satisfying $X_\Lambda = Supp(m)$.
\end{prop}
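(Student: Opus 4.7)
The plan is to show directly that the set $Y := \{\Lambda \in X : X_\Lambda = \mathrm{Supp}(m)\}$ coincides with one single RLI-class, after which full measure comes essentially for free from Proposition~\ref{prop:ergodicity}. Since almost everything needed has been stated earlier in the text, the argument should be a one-line invocation followed by a set-theoretic identification.

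First, I would apply Proposition~\ref{prop:ergodicity} to the ergodic system $(X,\mathbb{G},m)$ to get that $Y$ is Borel and satisfies $m(Y)=1$. In particular $Y$ is non-empty, so pick any $\mathsf{\Lambda}_{0}\in Y$; by definition of $Y$ one has $X_{\mathsf{\Lambda}_{0}}=\mathrm{Supp}(m)$. Then by definition (\ref{RLI.class}) of the RLI-class,
\begin{align*}
\mathrm{RLI}(\mathsf{\Lambda}_{0})
=\bigl\{\Lambda\in\mathcal{M}^{\infty}(\mathbb{G})\,\big|\,X_{\Lambda}=X_{\mathsf{\Lambda}_{0}}\bigr\}
=\bigl\{\Lambda\in\mathcal{M}^{\infty}(\mathbb{G})\,\big|\,X_{\Lambda}=\mathrm{Supp}(m)\bigr\}.
\end{align*}
Now I would use the observation made just before the proposition, namely that whenever $\Lambda$ belongs to $X$ then its whole RLI-class belongs to $X$; since $\mathsf{\Lambda}_{0}\in X$ this gives $\mathrm{RLI}(\mathsf{\Lambda}_{0})\subseteq X$. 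Hence $\mathrm{RLI}(\mathsf{\Lambda}_{0})$ is exactly the set $Y$ of elements of $X$ whose hull equals $\mathrm{Supp}(m)$. This shows that $Y$ is an RLI-class and that it has full $m$-measure.

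For uniqueness of such an RLI-class, I would invoke the partition of $X$ into disjoint RLI-classes recalled in the paragraph preceding the statement: two distinct RLI-classes are disjoint, so if two of them had full $m$-measure their union would force $m(X)\geqslant 2$, contradicting that $m$ is a probability measure. Hence $Y$ is the unique RLI-class of full measure. There is no genuine obstacle here: the work is entirely done by Proposition~\ref{prop:ergodicity} together with the fact that $X$ is stable under taking RLI-classes, the proof being essentially a rewording of the definition of $\mathrm{RLI}(\cdot)$.
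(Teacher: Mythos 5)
Your argument is correct and is essentially the paper's own (the paper proves this in one sentence preceding the proposition, by invoking Proposition~\ref{prop:ergodicity} and noting that the full-measure set of $\Lambda$ with $X_\Lambda = Supp(m)$ is a single RLI-class, using $\mathrm{RLI}(\mathsf{\Lambda}) \subseteq X_{\mathsf{\Lambda}}$ exactly as you do). The only point worth making explicit, which you handle correctly, is that $\mathrm{RLI}(\mathsf{\Lambda}_0)$ is a priori a subset of $\mathcal{M}^{\infty}(\mathbb{G})$ and one needs $\mathrm{RLI}(\mathsf{\Lambda}_0)\subseteq X_{\mathsf{\Lambda}_0}=Supp(m)\subseteq X$ to identify it with $Y$.
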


\vspace{0.2cm} 

This will be of main importance in Section \ref{Section:weighted.meyer.sets}. Any dynamical system of translation-bounded measures $(X, \mathbb{G}, m)$ has the natural set of continuous functions on $X$ given by the restriction of functions (\ref{elemenetary.functions}) on the compact subset $X$,
\begin{align}\label{process}  \xymatrixcolsep{3pc}\xymatrix{  \mathcal{C}_c(\mathbb{G}) \ni \phi \ar@{|->}[r] & \mathcal{N} _\phi = \mathcal{N} _\phi ^X \in \mathcal{C}(X)  }, \qquad  \mathcal{N}_\phi (\Lambda ) = \int _\mathbb{G} \, \phi _- \, d\Lambda  
\end{align}

This map is $\mathbb{C}$-linear and moreover satisfies $\mathcal{N} _\phi (. * \delta _t) = \mathcal{N} _{\phi * \delta _t}$ whenever $t\in \mathbb{G}$.

\subsection{\textsf{Intensities}} ~ 

\vspace{0.2cm} 

Let $\mathcal{M}_{(K,M)}(\mathbb{G})$ be the compact and convex subset of $(K,M)$-translation-bounded measures in $\mathcal{M}^{\infty}(\mathbb{G})$. The following statement is a particular case of a general result on vector-valued integration theory, see \cite{Rudin}, Theorem 3.27 therein:\\

\begin{theo}\label{theo:intensity} Suppose that $X$ is a compact subset of $\mathcal{M}_{(K,M)}(\mathbb{G})$. Then any bounded complex measure $\mu \in \mathcal{M}^b(X)$ defines a translation-bounded measure $\mathsf{i}(\mu) \in \mathcal{M}^{\infty}(\mathbb{G})$ by 
\begin{align*} \int _{\mathbb{G}} \phi \, d \, \mathsf{i}(\mu) = \int _{X} \int _{\mathbb{G}} \phi \, d\Lambda \,d\mu(\Lambda)
\end{align*}
If moreover $\mu$ is a probability measure then $\mathsf{i}(\mu)$ belongs to $\mathcal{M}_{(K,M)}(\mathbb{G})$.\\
\end{theo}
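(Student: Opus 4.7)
The plan is to construct $\mathsf{i}(\mu)$ directly via the Riesz--Markov representation theorem, by reading the right-hand side of the displayed formula as a continuous linear functional on $(\mathcal{C}_c(\mathbb{G}),\mathcal{T}_{lim})$. In spirit this is exactly what the cited Theorem 3.27 of \cite{Rudin} on vector-valued integration yields when applied to the inclusion $X\hookrightarrow (\mathcal{M}^{\infty}(\mathbb{G}),\mathcal{T}_{vague})$ and the probability measure $\mu$, but I prefer to present the argument directly.

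First I would observe that for each $\phi\in\mathcal{C}_c(\mathbb{G})$ the integrand $\Lambda\mapsto \int_{\mathbb{G}}\phi\, d\Lambda$ is continuous on $X$: this is immediate from the definition of the vague topology recorded in (\ref{elemenetary.functions}). Being continuous on the compact set $X$, it is bounded and Borel, hence $\mu$-integrable, so the functional $T_\mu(\phi):=\int_X\!\int_{\mathbb{G}}\phi\, d\Lambda\, d\mu(\Lambda)$ is well defined and linear in $\phi$.

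Next I would produce a uniform bound compatible with $\mathcal{T}_{lim}$. Fixing a member $K_N$ of the nested exhaustion, I would cover the compact $K_N$ by finitely many translates $K+t_1,\dots,K+t_r$ of $K$; the hypothesis $X\subseteq \mathcal{M}_{(K,M)}(\mathbb{G})$ then gives $|\Lambda|(K_N)\leq rM$ uniformly over $\Lambda\in X$, whence $|T_\mu(\phi)|\leq rM\,\|\mu\|\,\|\phi\|_\infty$ for every $\phi\in\mathcal{C}_{K_N}(\mathbb{G})$. This shows $T_\mu$ is bounded on each Banach piece $(\mathcal{C}_{K_N}(\mathbb{G}),\|\cdot\|_\infty)$, hence $\mathcal{T}_{lim}$-continuous, so Riesz--Markov produces a unique complex Borel measure $\mathsf{i}(\mu)\in\mathcal{M}(\mathbb{G})$ representing $T_\mu$ and satisfying the identity of the theorem.

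For the translation-bounded conclusion I would first extend the defining identity from $\phi\in\mathcal{C}_c(\mathbb{G})$ to bounded Borel functions of compact support by a routine monotone class / dominated convergence argument, giving $\mathsf{i}(\mu)(B)=\int_X \Lambda(B)\, d\mu(\Lambda)$ for any bounded Borel set $B$. Then for any Borel partition $\{B_i\}$ of $K+t$,
$$\sum_i |\mathsf{i}(\mu)(B_i)| \;\leq\; \int_X \sum_i |\Lambda(B_i)|\, d|\mu|(\Lambda)\;\leq\;\int_X |\Lambda|(K+t)\, d|\mu|(\Lambda)\;\leq\; M\,\|\mu\|,$$
so $|\mathsf{i}(\mu)|(K+t)\leq M\|\mu\|$ uniformly in $t\in\mathbb{G}$, proving translation-boundedness. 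When $\mu$ is a probability measure, $\|\mu\|=1$ and the same bound gives $\mathsf{i}(\mu)\in\mathcal{M}_{(K,M)}(\mathbb{G})$ with the \emph{same} constant $M$, as claimed.

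The only mildly subtle point in this plan is the passage in the last paragraph from the functional identity on $\mathcal{C}_c(\mathbb{G})$ to a Borel-set version controlling $|\mathsf{i}(\mu)|$; every other step is a direct exploitation of two structural facts about $X$ established earlier, namely its \emph{vague compactness} (which ensures continuity and integrability of $\Lambda\mapsto\int\phi\, d\Lambda$) and its membership in $\mathcal{M}_{(K,M)}(\mathbb{G})$ (which supplies the uniform local bound $|\Lambda|(K+t)\leq M$).
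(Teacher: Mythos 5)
Your argument is correct, but it is genuinely more self-contained than what the paper does: the paper offers no proof at all for Theorem \ref{theo:intensity} beyond citing Theorem 3.27 of \cite{Rudin} on vector-valued integration, which, applied to the inclusion of the compact set $X$ into the compact \emph{convex} set $\mathcal{M}_{(K,M)}(\mathbb{G})$ (Proposition \ref{prop.convexity}), produces the barycenter of a probability measure directly inside $\mathcal{M}_{(K,M)}(\mathbb{G})$; the general bounded complex case then follows by Jordan--Hahn decomposition of $\mu$ into a linear combination of probability measures. What that route buys is brevity and the fact that the membership $\mathsf{i}(\mu)\in\mathcal{M}_{(K,M)}(\mathbb{G})$ for probability $\mu$ comes for free from convexity, with no need to touch total variations. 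What your route buys is an explicit, quantitative construction: the functional $T_\mu$, its continuity on each Banach piece $(\mathcal{C}_{K_N}(\mathbb{G}),\Vert\cdot\Vert_\infty)$ via the finite-cover bound $\vert\Lambda\vert(K_N)\leqslant rM$, Riesz--Markov, and the sharp estimate $\vert\mathsf{i}(\mu)\vert(K+t)\leqslant M\Vert\mu\Vert$ valid for arbitrary bounded complex $\mu$. The one step you rightly flag as subtle --- passing from the identity on $\mathcal{C}_c(\mathbb{G})$ to $\mathsf{i}(\mu)(B)=\int_X\Lambda(B)\,d\mu(\Lambda)$ for bounded Borel $B$, so as to control the total variation over Borel partitions of $K+t$ --- is a routine functional monotone class argument (dominated convergence is available since $\vert\Lambda\vert$ is uniformly locally bounded over $X$ and $\vert\mathsf{i}(\mu)\vert$ is locally finite), so there is no gap; it is simply the price of not invoking the convexity of $\mathcal{M}_{(K,M)}(\mathbb{G})$, which the paper's citation exploits implicitly.
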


Given a compact subset $X$ of some $\mathcal{M}_{(K,M)}(\mathbb{G})$ and $\mu$ a complex measure on $X$, the resulting translation-bounded measure $\mathsf{i}(\mu)$ is sometimes called the integral, or barycenter, or also first moment of $\mu$ on $X$. However we shall call it here \textit{intensity}. This terminology comes from point-set theory, where one studies \textit{Delone sets} of $\mathbb{G}$ (see for instance \cite{BaaLen}): Given a compact space $\mathbb{X}$ of Delone sets of $\mathbb{G}$ (with common radius of uniform discreteness), the intensity $I(\mu)$ of a bounded measure $\mu$ on $\mathbb{X}$ is the translation-bounded measure obtained (\cite{Gou, Gou2}) on any Borel set $A\subseteq \mathbb{G}$ by  
\begin{align*} I(\mu)(A) = \int _{\mathbb{X}} \sharp ( S \cap A) \, d \mu ( S )
\end{align*}

A compact space $\mathbb{X}$ of Delone sets can be identified with a compact subset $X$ of some $\mathcal{M}_{(K,M)}(\mathbb{G})$ by replacing a point set $S$ by the Dirac comb $\delta _S:= \sum _{t \in S} \delta _t$ (see \cite{BaaLen}, Section 4 and Theorem 4 therein). Hence a measure $\mu$ on $\mathbb{X}$ can alternatively be viewed as living on $X$, and its intensity as given just above is equal to the translation-bounded measure yield by Theorem \ref{theo:intensity}.

Recall that if $X$ is a compact $\mathbb{G}$-stable subset of $\mathcal{M}_{(K,M)}(\mathbb{G})$ then $\mathbb{G}$ acts on the space $\mathcal{M}^b(X)$ of bounded complex Radon measures of $X$ via the formula $\mu .t := \mu (.* \delta _{-t})$. The following is then straightforward to prove:

\vspace{0.2cm} 

\begin{prop}\label{prop:barycenter} Let $X$ be a compact $\mathbb{G}$-stable subset of $\mathcal{M}_{(K,M)}(\mathbb{G})$, with $\mathcal{M}^b(X)$ its space of bounded complex measures. Then the map
\begin{align*} \xymatrixcolsep{3pc}\xymatrix{ (\mathcal{M}^b(X), \mathcal{T}_{vague}) \ni \mu  \ar@{|->}[r] & \mathsf{i}(\mu) \in (\mathcal{M}^\infty (\mathbb{G}), \mathcal{T}_{vague}) }
\end{align*}
is continuous and $\mathbb{G}$-commuting, that is, $\mathsf{i}(\mu (.* \delta _{-t}))= \mathsf{i}(\mu ) * \delta _t$ for any $t\in \mathbb{G}$.
\end{prop}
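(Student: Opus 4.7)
The plan is to verify the two claims separately, both reducing in the end to a Fubini-style exchange combined with the defining identity of $\mathsf{i}$ given in Theorem~\ref{theo:intensity}. I expect neither part to present a real obstacle; the only delicate bookkeeping is keeping track of the direction of the right-action $\mu \mapsto \mu.t = \mu(.(-t))$ inherited on $\mathcal{M}^b(X)$ from the action $\Lambda \mapsto \Lambda * \delta_t$ on $X$.

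For continuity, I would first observe that, for every $\phi \in \mathcal{C}_c(\mathbb{G})$, the map $F_\phi : X \to \mathbb{C}$ defined by $F_\phi(\Lambda) := \int_\mathbb{G} \phi \, d\Lambda$ is continuous by the very definition of the vague topology on $\mathcal{M}^\infty(\mathbb{G})$, compare with the elementary functions (\ref{elemenetary.functions}). Since $X$ is compact this function is automatically bounded, so $F_\phi \in \mathcal{C}(X)$. The defining identity of the intensity then reads $\int_\mathbb{G} \phi \, d\,\mathsf{i}(\mu) = \int_X F_\phi \, d\mu$. As $X$ is compact, vague convergence in $\mathcal{M}^b(X)$ is exactly weak-$*$ convergence against elements of $\mathcal{C}(X) = \mathcal{C}_c(X)$, so if $\mu_n \to \mu$ vaguely on $X$ then $\int_X F_\phi \, d\mu_n \to \int_X F_\phi \, d\mu$ for each $\phi \in \mathcal{C}_c(\mathbb{G})$. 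This is precisely vague convergence of $\mathsf{i}(\mu_n)$ to $\mathsf{i}(\mu)$ in $\mathcal{M}^\infty(\mathbb{G})$, so $\mathsf{i}$ is continuous.

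For the $\mathbb{G}$-commuting property, fix $t \in \mathbb{G}$ and test both sides of the claimed identity against an arbitrary $\phi \in \mathcal{C}_c(\mathbb{G})$. On one hand, the convolution formula recalled in Paragraph~\ref{Par:basics.fourier} gives
\begin{align*}
\int_\mathbb{G} \phi \, d\bigl(\mathsf{i}(\mu) * \delta_t\bigr) \;=\; \int_\mathbb{G} \phi(s+t) \, d\mathsf{i}(\mu)(s) \;=\; \int_X \int_\mathbb{G} \phi(s+t) \, d\Lambda(s) \, d\mu(\Lambda),
\end{align*}
the last equality being Theorem~\ref{theo:intensity} applied to the translated test function $s \mapsto \phi(s+t)$, which is again in $\mathcal{C}_c(\mathbb{G})$. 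On the other hand, recognizing that $\mu(. * \delta_{-t})$ is the push-forward of $\mu$ under the homeomorphism $T_t : \Lambda \mapsto \Lambda * \delta_t$ of $X$, Theorem~\ref{theo:intensity} gives
\begin{align*}
\int_\mathbb{G} \phi \, d\,\mathsf{i}\bigl(\mu(. * \delta_{-t})\bigr) \;=\; \int_X \int_\mathbb{G} \phi \, d(\Lambda * \delta_t) \, d\mu(\Lambda) \;=\; \int_X \int_\mathbb{G} \phi(s+t) \, d\Lambda(s) \, d\mu(\Lambda).
\end{align*}
Comparing the two displays and varying $\phi$ over $\mathcal{C}_c(\mathbb{G})$ yields $\mathsf{i}(\mu(. * \delta_{-t})) = \mathsf{i}(\mu) * \delta_t$, which closes the proof.
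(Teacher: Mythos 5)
Your proof is correct and fills in exactly the routine verification the paper omits (the paper only remarks that the proposition is ``straightforward to prove''): continuity is read off from the fact that each coordinate functional $\mu \mapsto \int_X F_\phi \, d\mu$ of the initial (vague) topology on $\mathcal{M}^\infty(\mathbb{G})$ is weak-$*$ continuous on $\mathcal{M}^b(X)$ because $F_\phi$ restricts to an element of $\mathcal{C}(X)$, and equivariance is the push-forward identity tested against $\mathcal{C}_c(\mathbb{G})$. One small caveat: since $(\mathcal{M}^b(X), \mathcal{T}_{vague})$ is not first countable, arguing with sequences $\mu_n \to \mu$ only gives sequential continuity; replace the sequence by a net, or simply invoke the universal property of the initial topology (continuity of $\mathsf{i}$ is equivalent to continuity of $\mathcal{N}_\phi \circ \mathsf{i}$ for every $\phi$, which is what you actually verified), and the argument is complete.
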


\vspace{0.2cm} 

\subsection{\textsf{Mathematical diffraction and connection with dynamical spectrum}}\label{Par:diffraction}  ~ 

\vspace{0.2cm} 


%

We provide here a very concise survey on diffraction, and refer the reader to \cite{Hof, Sch, BaaLen, BaaMoo} for more details, comments and proofs. The diffraction of a translation-bounded measure $\Lambda$ is defined by first considering a Van Hove sequence \cite{Sch} of subsets $(\mathcal{A}_n)_{n\in \mathbb{N}}$, form which one considers the following vague limit in $ \mathcal{M}^{\infty}(\mathbb{G})$ of truncated auto-correlations
\begin{align}\label{formula.auto-correlation}  \gamma _\mathsf{\Lambda} := \lim _{n \rightarrow \infty} \frac{1}{\left|\mathcal{A}_n \right|} \mathsf{\Lambda}\vert _{\mathcal{A}_n} * \widetilde{\mathsf{\Lambda}\vert _{\mathcal{A}_n}}
\end{align}

The above sequence may not converges in $\mathcal{M}^{\infty}(\mathbb{G})$ but it always will once one extract an appropriate sub-sequence (one shows by mimicking the proof of Proposition 2.2 of \cite{Hof} and using Lemma 1.1 in \cite{Sch} that such sequence belongs to a certain $\mathcal{M}_{(K,M)}(\mathbb{G})$, and the claim comes from compacity and metrizability of this latter set). Thus by considering this sub-sequence in (\ref{formula.auto-correlation}) one can get rid of this difficulty. Moreover again up to extraction one can assume that the Van Hove sequence is \textit{tempered} (\cite{Lin}, Proposition 1.4). The limit $ \gamma _\mathsf{\Lambda}$ is the \textit{auto-correlation measure} of $\mathsf{\Lambda} \in \mathcal{M}^{\infty}(\mathbb{G})$ (with respect to the Van Hove sequence $(\mathcal{A}_n)_{n\in \mathbb{N}}$), and is translation-bounded. Such measure is moreover \textit{positive-definite}, hence Fourier transformable (\cite{VanDenBergForst}, Theorem 4.7): There exists a unique Borel measure $\widehat{\gamma} _\mathsf{\Lambda}$ on the Pontryagin dual $\widehat{\mathbb{G}}$, positive and translation-bounded, such that at any $\phi \in \mathcal{C}_c(\mathbb{G})$
\begin{align}\label{formula.diffraction} \int _{\widehat{\mathbb{G}}} \vert \widehat{\phi}_-  \vert ^2 \, d\widehat{\gamma} _\mathsf{\Lambda} = \int _{\mathbb{G}} \phi  * \widetilde{\phi} \, d\gamma _\mathsf{\Lambda}
\end{align}


Whenever it exists, we call $\widehat{\gamma} _\mathsf{\Lambda}$ the \textit{diffraction measure} of $\mathsf{\Lambda} \in \mathcal{M}^{\infty}(\mathbb{G})$ (with respect to the sequence $(\mathcal{A}_n)_{n\in \mathbb{N}}$). Within a given dynamical system of translation-bounded measures $(X, \mathbb{G},m)$ different measures will have, in principle, different diffraction measures. However when $m$ is ergodic there is a unique typical resulting diffraction, that is, a measure $\widehat{\gamma}$ on $\widehat{\mathbb{G}}$ such that $\widehat{\gamma} = \widehat{\gamma}_{\Lambda}$ for $m$-almost every $\Lambda \in X$ and along any tempered Van Hove sequence (see \cite{BaaLen}, Theorem $5$). In fact, it is possible to get rid of the ergodicity assumption and provide a certain definition for the diffraction measure of a measured dynamical system of translation-bounded measures $(X, \mathbb{G},m)$, in a way that supplies the $m$-almost sure diffraction in the case of ergodicity:\\

\begin{theo}\label{theo:diffraction} \cite{BaaLen} Let $(X, \mathbb{G},m)$ be a measured dynamical system of translation-bounded measures. Then there exists a unique positive measure $\widehat{\gamma}$ on $\widehat{\mathbb{G}}$ satisfying the equality for each $\phi , \psi \in \mathcal{C}_c(\mathbb{G})$
\begin{align}\label{formula.diffraction.dynamic} \int _{\widehat{\mathbb{G}}} \widehat{\phi }\overline{\widehat{\psi}} \, d \widehat{\gamma} = \int _X \mathcal{N}_{\phi}\overline{\mathcal{N}_{\psi}} \, dm
\end{align}
When $(X, \mathbb{G},m)$ is ergodic the measure $\widehat{\gamma}$ is its $m$-almost sure diffraction measure.\\
\end{theo}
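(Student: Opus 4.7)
The plan is to construct first an autocorrelation measure $\gamma$ on $\mathbb{G}$, from which $\widehat{\gamma}$ emerges as its Fourier transform. I introduce the sesquilinear form
\[
B(\phi, \psi) := \int_X \mathcal{N}_\phi \overline{\mathcal{N}_\psi} \, dm = \langle \mathcal{N}_\phi, \mathcal{N}_\psi\rangle_{L^2(X,m)},
\]
which is well-defined on $\mathcal{C}_c(\mathbb{G}) \times \mathcal{C}_c(\mathbb{G})$ because each $\mathcal{N}_\phi$ belongs to $\mathcal{C}(X) \subset L^\infty(X,m)$ by the uniform $(K,M)$-translation-boundedness on $X$. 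Combining the intertwining identity $\mathcal{N}_{\phi * \delta_t} = \mathcal{N}_\phi(\,\cdot\, * \delta_t)$ with the $\mathbb{G}$-invariance of $m$ yields translation-invariance $B(\phi * \delta_t, \psi * \delta_t) = B(\phi, \psi)$, while $B(\phi, \phi) = \|\mathcal{N}_\phi\|_{L^2}^2 \geq 0$ is immediate.

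The next step realizes $B$ as integration against a measure on $\mathbb{G}$. Via the two-point correlation
\[
\rho(F) := \int_X \int\int F(s, u) \, d\Lambda(s) \, d\overline{\Lambda}(u) \, dm(\Lambda), \qquad F \in \mathcal{C}_c(\mathbb{G}\times\mathbb{G}),
\]
the uniform $(K,M)$-bound on $X$ makes $\rho$ a translation-bounded complex Borel measure on $\mathbb{G}\times\mathbb{G}$, and the $\mathbb{G}$-invariance of $m$ makes $\rho$ invariant under the diagonal $\mathbb{G}$-action. Since $B(\phi, \psi) = \rho(\phi_- \otimes \overline{\psi_-})$, the task is to disintegrate $\rho$ under this diagonal action: after the change of variables $(s, u) \mapsto (s - u, u)$, diagonal invariance becomes translation-invariance in the second factor alone, so $\rho$ takes the form $\gamma \otimes dw$ for a translation-bounded measure $\gamma$ on $\mathbb{G}$. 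Unraveling gives $B(\phi, \psi) = \int (\phi * \widetilde{\psi}) \, d\gamma$ (up to a reflection absorbed into $\gamma$), and positivity of $B$ on the diagonal yields positive-definiteness of $\gamma$.

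Applying Theorem 4.7 of \cite{VanDenBergForst} (already cited in the excerpt) to the positive-definite translation-bounded measure $\gamma$, one obtains a unique positive translation-bounded measure $\widehat{\gamma}$ on $\widehat{\mathbb{G}}$ with $\gamma(\phi * \widetilde{\psi}) = \widehat{\gamma}(\widehat{\phi}\,\overline{\widehat{\psi}})$ via $\widehat{\phi * \widetilde{\psi}} = \widehat{\phi}\,\overline{\widehat{\psi}}$, which is exactly \eqref{formula.diffraction.dynamic}. Uniqueness of $\widehat{\gamma}$ follows since the linear span of $\{\widehat{\phi}\,\overline{\widehat{\psi}} : \phi, \psi \in \mathcal{C}_c(\mathbb{G})\}$ lies in $\mathcal{C}_0(\widehat{\mathbb{G}})$, separates points, is stable under conjugation, and vanishes nowhere; hence by Stone--Weierstrass it is uniformly dense in $\mathcal{C}_0(\widehat{\mathbb{G}})$ and, after a truncation, determines $\widehat{\gamma}$.

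For the ergodic case, I apply the Lindenstrauss pointwise ergodic theorem along a tempered Van Hove sequence $(\mathcal{A}_n)_n$ to each continuous function $\mathcal{N}_\phi \overline{\mathcal{N}_\psi} \in \mathcal{C}(X)$: for $m$-almost every $\Lambda \in X$, the spatial averages along the orbit of $\Lambda$ converge to $B(\phi, \psi)$. Using the standard identity $(\mu * \widetilde{\mu})(\phi * \widetilde{\psi}) = \int (\mu * \phi_-)(t)\, \overline{(\mu * \psi_-)(t)}\, dt$ applied to $\mu = \Lambda\vert_{\mathcal{A}_n}$, together with the Van-Hove control of boundary terms, one identifies these spatial averages with $\int (\phi * \widetilde{\psi})\, d\gamma_{\Lambda, n}$, where $\gamma_{\Lambda, n}$ is the truncated autocorrelation from \eqref{formula.auto-correlation}. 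Passing to the vague limit gives $\gamma_\Lambda = \gamma$ $m$-almost surely, hence $\widehat{\gamma_\Lambda} = \widehat{\gamma}$. The main technical hurdle is the disintegration of $\rho$ into $\gamma \otimes dw$: one must propagate the uniform $(K,M)$-bound on $X$ through the change of variables to guarantee that $\gamma$ is genuinely translation-bounded on $\mathbb{G}$, not merely Radon.
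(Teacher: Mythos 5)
Your proposal is correct and follows essentially the same route as the paper, which itself does not give a proof but defers to Theorem 5(b) and Lemma 7 of \cite{BaaLen}: construct the autocorrelation $\gamma$ of the system so that $\gamma(\phi * \widetilde{\psi}) = \langle \mathcal{N}_\phi, \mathcal{N}_\psi\rangle$, observe positive-definiteness and apply the Fourier transformability result of \cite{VanDenBergForst}, get uniqueness by Stone--Weierstrass, and identify $\widehat{\gamma}$ with the almost-sure individual diffraction via the pointwise ergodic theorem along a tempered Van Hove sequence. Your construction of $\gamma$ through the second-moment measure $\rho$ and its disintegration under the diagonal action is a standard equivalent packaging of the Baake--Lenz argument, and the technical points you flag (translation-boundedness of $\gamma$, the reflection conventions, the truncation in the uniqueness step) are exactly the ones handled in that reference.
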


The proof is given by combining Theorem 5(b) and Lemma 7 of \cite{BaaLen} together with the formula defining the diffraction (\ref{formula.diffraction}). The formula (\ref{formula.diffraction.dynamic}) defining the diffraction of a measured dynamical system of translation-bounded measures extends to the following formula, as shown in Proposition 7 in \cite{BaaLen}, for any Borel subset $\mathcal{P}\subseteq \widehat{\mathbb{G}}$
\begin{align}\label{formula.diffraction.dynamic.generalized} \int _{\mathcal{P}} \widehat{\phi }\overline{\widehat{\psi}} \, d \widehat{\gamma} = \int _X \mathsf{E}(\mathcal{P})(\mathcal{N}_{\phi})\overline{\mathsf{E}(\mathcal{P})(\mathcal{N}_{\psi})} \, dm
\end{align}

where $\mathsf{E}(\mathcal{P})$ stands for the spectral projector associated with $\mathcal{P}$ on $L^2(X,m)$. The theorem above provides an efficient tool to deal with the diffraction measure: As it was suggested in \cite{Dwo, Sch, BaaLen} and latter on explicitly formulated in \cite{DenMoo, LenMoo, BaaLenVEn}, the equality set in the theorem above is nothing but an isometric embedding
\begin{align}\label{diffraction.to.dynamic.map} \xymatrixcolsep{3pc}\xymatrix{\Theta : L^2(\widehat{\mathbb{G}}, \widehat{\gamma})\, \,  \ar@{^{(}->}[r] & \, \, L^2(X,m)}
\end{align}

where on the dense subspace of $L^2(\widehat{\mathbb{G}}, \widehat{\gamma})$ formed of the Fourier transforms $\widehat{\phi} $ of compactly supported functions $\phi \in \mathcal{C}_c(\mathbb{G})$ the mapping $\Theta$ writes $\Theta (\widehat{\phi}) := \mathcal{N}_{\phi}$. This is the so-called \textit{diffraction-to-dynamic map} of $(X, \mathbb{G},m)$, called after \cite{LenMoo}. Its range $\mathcal{H}_\Theta$ is the closed subspace of $L^2(X,m)$ spanned by continuous functions on $X$ of the form $\mathcal{N}_{\phi}$ with $\phi \in \mathcal{C}_c(\mathbb{G})$, which is in general not be the whole space $L^2(X,m)$. This subspace is $\mathbb{G}$-stable and thus its orthogonal projection, which we denote $P_\Theta$, commutes with the $\mathbb{G}$-representation, and consequently also with any spectral projector. Now using the mapping $\Theta$ one can check that formula (\ref{formula.diffraction.dynamic.generalized}) admits the following equivalent form: Denoting for $f\in L^{\infty}(\widehat{\mathbb{G}}, \widehat{\gamma})$ the associated multiplication operator on $L^2(\widehat{\mathbb{G}}, \widehat{\gamma})$ by $M_f$, one has for any Borel subset $\mathcal{P}\subseteq \widehat{\mathbb{G}}$
\begin{align}\label{formula.spectral projectors} \Theta \circ M_{\mathbb{I}_{\mathcal{P}}} \circ \Theta ^{-1} = \mathsf{E}(\mathcal{P})P_\Theta
\end{align}

An alternative way to observe this is to note that both $M_{\mathbb{I}_{\mathcal{P}}}$ and $\mathsf{E}(\mathcal{P})P_\Theta$ are projectors, which are in fact spectral projectors for appropriates $\mathbb{G}$-representations on $L^2(\widehat{\mathbb{G}}, \widehat{\gamma})$ and $\mathcal{H}_\Theta$ (see Section 3 of \cite{LenMoo} for more about these representations). These representations are intertwined by the diffraction to dynamic map, hence naturally yielding the intertwining formula (\ref{formula.spectral projectors}) for the associated spectral projectors. The fundamental connection between the dynamical and diffraction spectra of a measured dynamical system of translation-bounded measures states as follows, where point $(i)$ straightly follows from formula (\ref{formula.spectral projectors}) above whereas point $(ii)$ is shown in \cite{BaaLen}, Theorems 6, 7 and 9 therein:\\

\begin{theo}\label{theo:dynamical.diffraction.spectra} \cite{BaaLen} Let $(X, \mathbb{G}, m)$ be a measured dynamical system of translation-bounded measures, with diffraction measure $\widehat{\gamma}$ and projection-valued measure $\mathsf{E}$. Then:\\

(i) $\widehat{\gamma}$ is absolutely continuous with respect to $\mathsf{E}$. In particular the set $\mathcal{S}$ of Bragg peaks, the atoms of $\widehat{\gamma}$, belongs to the eigenvalue group $\mathcal{E}$, the atoms of $\mathsf{E}$.

\vspace{0.2cm}

(ii) $\widehat{\gamma}$ is a pure point measure if and only if $\mathsf{E}$ is a pure point measure. In this case the set of Bragg peaks $\mathcal{S}$ algebraically generates the eigenvalue group $\mathcal{E}$.\\
\end{theo}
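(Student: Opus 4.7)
The plan is to derive both parts from the intertwining identity (\ref{formula.spectral projectors}), $\Theta \circ M_{\mathbb{I}_{\mathcal{P}}} \circ \Theta^{-1} = \mathsf{E}(\mathcal{P})P_\Theta$, which is already established in the excerpt and connects the multiplication calculus on $L^2(\widehat{\mathbb{G}}, \widehat{\gamma})$ with the spectral projectors of the $\mathbb{G}$-representation on $L^2(X,m)$.

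For part $(i)$, I would take a Borel set $\mathcal{P}\subseteq \widehat{\mathbb{G}}$ with $\mathsf{E}(\mathcal{P})=0$, so the right-hand side of (\ref{formula.spectral projectors}) vanishes. Since $\Theta$ is an isometric embedding, this forces $M_{\mathbb{I}_{\mathcal{P}}}=0$ on $L^2(\widehat{\mathbb{G}},\widehat{\gamma})$, equivalently $\mathbb{I}_\mathcal{P}=0$ in $L^\infty(\widehat{\mathbb{G}}, \widehat{\gamma})$, i.e. $\widehat{\gamma}(\mathcal{P})=0$. Hence $\widehat{\gamma}\ll\mathsf{E}$. Applied to a singleton $\mathcal{P}=\{\omega\}$ with $\widehat{\gamma}(\{\omega\})>0$, this yields $\mathsf{E}(\{\omega\})\neq 0$, so every Bragg peak belongs to $\mathcal{E}$.

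For part $(ii)$, the ``if'' direction follows at once from $(i)$: if $\mathsf{E}$ is pure point with atoms $\mathcal{E}$, then $\mathsf{E}(\widehat{\mathbb{G}}\setminus\mathcal{E})=0$, so $\widehat{\gamma}(\widehat{\mathbb{G}}\setminus\mathcal{E})=0$, and $\mathcal{E}$ being countable makes $\widehat{\gamma}$ pure point. For the converse, assume $\widehat{\gamma}$ is pure point with atom set $\mathcal{S}$. Setting $\mathcal{P}=\mathcal{S}$ in (\ref{formula.spectral projectors}) makes $M_{\mathbb{I}_\mathcal{S}}$ the identity on $L^2(\widehat{\mathbb{G}},\widehat{\gamma})$, whence $\mathsf{E}(\mathcal{S})P_{\Theta}=P_{\Theta}$; equivalently, every $\mathcal{N}_\phi$ lies in the closed span of eigenfunctions with eigenvalues in $\mathcal{S}$. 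The remaining step is to promote this containment from the subspace $\mathcal{H}_\Theta$ to all of $L^2(X,m)$. I would enlarge $\{\mathcal{N}_\phi\}$ with the constant function $\mathbf{1}$ (itself an eigenfunction for the eigenvalue $1$) and consider the unital $\ast$-algebra $\mathcal{A}$ they generate inside $C(X)$. By the Riesz-Markov identification recalled in Paragraph \ref{Par:basics.fourier}, distinct translation-bounded measures are separated by some $\mathcal{N}_\phi$, so $\mathcal{A}$ separates points of $X$, and Stone-Weierstrass gives density of $\mathcal{A}$ in $C(X)$ and hence in $L^2(X,m)$. Using ergodicity (implicit in the wording ``eigenvalue group''), every eigenfunction has constant modulus and so is bounded, while products and complex conjugates of bounded eigenfunctions with eigenvalues in $\mathcal{S}$ remain eigenfunctions with eigenvalues in the subgroup $\langle\mathcal{S}\rangle\subseteq\widehat{\mathbb{G}}$. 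Consequently $\mathcal{A}$ sits inside the closed linear span of eigenfunctions with eigenvalues in $\langle\mathcal{S}\rangle$, so this span is dense in $L^2(X,m)$ and $\mathsf{E}$ is pure point with eigenvalue group $\mathcal{E}=\langle\mathcal{S}\rangle$.

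The main obstacle is the promotion step in the converse of $(ii)$, which rests on two ingredients that are standard but must be combined with care: the Stone-Weierstrass density argument, which hinges on point-separation of the $\mathcal{N}_\phi$, and the multiplicative stability of $\mathcal{H}_\mathrm{p}$ under $L^\infty$-bounded products, which relies on the ergodic bound on eigenfunctions. Both are individually elementary, but their combination requires handling $L^2$-convergence of infinite eigenfunction expansions within the bounded algebra $\mathcal{A}$, which is the one subtle point of the proof.
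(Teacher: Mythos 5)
Your proof is correct and follows the route the paper itself indicates: part (i) is exactly the one-line consequence of the intertwining formula (\ref{formula.spectral projectors}) that the text mentions, and your converse in part (ii) reconstructs the standard Stone--Weierstrass argument of \cite{BaaLen} (the source the paper cites for this direction rather than reproving it), namely that $\mathcal{H}_\Theta$ lands in $\mathrm{ran}\,\mathsf{E}(\mathcal{S})$ and that the unital $\ast$-algebra generated by the point-separating family $\lbrace \mathcal{N}_\phi \rbrace$ is dense in $C(X)$ while staying inside the closed span of eigenfunctions with eigenvalues in $\langle\mathcal{S}\rangle$. The only caveat is that the theorem is stated for a general measured system whereas your converse of (ii) invokes ergodicity to get bounded eigenfunctions; this is harmless for the paper's purposes (the result is only ever applied to ergodic systems) and can in any case be removed by truncating an eigenfunction $h$ on the $\mathbb{G}$-invariant sets $\lbrace \vert h\vert \leqslant N\rbrace$, since $\vert h \vert$ is an invariant function.
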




\section{\textsf{The decomposition of translation-bounded measures}}\label{Section:decomposition}

\subsection{\textsf{Existence of the decomposition}}  ~ 

\vspace{0.2cm}

In this section we give the proof of our main result:\\

\begin{theo}\label{theo:decomposition} Let $(X, \mathbb{G},m)$ be an ergodic system of translation-bounded measures with diffraction $\widehat{\gamma}= \widehat{\gamma}_{\mathrm{p}} + \widehat{\gamma}_{\mathrm{c}}$. There exist two ergodic systems of translation-bounded measures: 
\vspace{0.2cm}

\begin{itemize}
\item \textit{$(X_{\mathrm{p}}, \mathbb{G},m_{\mathrm{p}})$ having diffraction $\widehat{\gamma}_{\mathrm{p}}$,}
\vspace{0.2cm}

\item \textit{$(X_{\mathrm{c}}, \mathbb{G},m_{\mathrm{c}})$ having diffraction $\widehat{\gamma}_{\mathrm{c}}$,}
\end{itemize}
\vspace{0.2cm}

which are Borel factors of $(X, \mathbb{G},m)$ under two Borel factor maps
$$\xymatrixcolsep{3.5pc}\xymatrix{  X \ni \Lambda \, \ar@{|->}[r] & \,\Lambda _{\mathrm{p}} \in X_{\mathrm{p}} } \qquad \xymatrixcolsep{3.5pc}\xymatrix{  X \ni \Lambda \, \ar@{|->}[r] & \,\Lambda _{\mathrm{c}} \in X_{\mathrm{c}} }$$
such that the equality $\Lambda = \Lambda _{\mathrm{p}}+ \Lambda _{\mathrm{c}}$ occurs for $m$-almost every $\Lambda\in X$..\\
\end{theo}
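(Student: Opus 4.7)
The plan is to exploit the maximal Kronecker Borel factor together with disintegration and the intensity map of Theorem \ref{theo:intensity}. Let $\pi : X \to \mathrm{T}_{\mathcal{E}}$ be the Borel factor map of Proposition \ref{prop.construction.Borel.factor.map} and let $\Lambda \mapsto \mu_{\pi(\Lambda)}$ be the associated disintegration of $m$. The pivotal computation, obtained by combining Proposition \ref{prop:projection.and.disintegration} with the definition (\ref{process}) of $\mathcal{N}_\phi$ and Fubini, is
\begin{align*}
\mathsf{E}(\mathcal{E})(\mathcal{N}_\phi)(\Lambda) \;=\; \int_X \mathcal{N}_\phi \, d\mu_{\pi(\Lambda)} \;=\; \mathcal{N}_\phi\bigl(\mathsf{i}(\mu_{\pi(\Lambda)})\bigr)
\end{align*}
for $m$-almost every $\Lambda \in X$ and every $\phi \in \mathcal{C}_c(\mathbb{G})$. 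This suggests setting $\Lambda_{\mathrm{p}} := \mathsf{i}(\mu_{\pi(\Lambda)})$ and $\Lambda_{\mathrm{c}} := \Lambda - \Lambda_{\mathrm{p}}$.

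Next I would verify the required measure-theoretic properties. Since $\mu_{\pi(\Lambda)}$ is a probability measure supported in $X \subseteq \mathcal{M}_{(K,M)}(\mathbb{G})$, Theorem \ref{theo:intensity} yields $\Lambda_{\mathrm{p}} \in \mathcal{M}_{(K,M)}(\mathbb{G})$, so $\Lambda_{\mathrm{c}}$ is again translation-bounded. Borel measurability of $\Lambda \mapsto \Lambda_{\mathrm{p}}$ comes from composing Borelness of the disintegration in vague topology with the vaguely continuous intensity map $\mathsf{i}$ (Proposition \ref{prop:barycenter}). The $\mathbb{G}$-map property follows by combining property $(D3)$ of the disintegration with the $\mathbb{G}$-commutativity of $\mathsf{i}$ asserted in Proposition \ref{prop:barycenter}. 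I would then let $X_{\mathrm{p}}$ and $X_{\mathrm{c}}$ be the vague closures of the images of $\Lambda \mapsto \Lambda_{\mathrm{p}}$ and $\Lambda \mapsto \Lambda_{\mathrm{c}}$ inside appropriate $\mathcal{M}_{(K,M')}(\mathbb{G})$ spaces, and let $m_{\mathrm{p}}$, $m_{\mathrm{c}}$ be the respective pushforward measures; ergodicity passes to factors as recalled in Paragraph \ref{paragraph:dynamical.systems}.

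To identify the diffractions, I would apply Theorem \ref{theo:diffraction} to $(X_{\mathrm{p}}, \mathbb{G}, m_{\mathrm{p}})$ and use the pivotal identity above together with (\ref{formula.diffraction.dynamic.generalized}) for $\mathcal{P}=\mathcal{E}$:
\begin{align*}
\int_{\widehat{\mathbb{G}}} \widehat{\phi}\,\overline{\widehat{\psi}} \, d\widehat{\gamma}_{X_{\mathrm{p}}}
&= \int_X \mathcal{N}_\phi(\Lambda_{\mathrm{p}})\,\overline{\mathcal{N}_\psi(\Lambda_{\mathrm{p}})} \, dm(\Lambda) \\
&= \int_X \mathsf{E}(\mathcal{E})(\mathcal{N}_\phi)\,\overline{\mathsf{E}(\mathcal{E})(\mathcal{N}_\psi)} \, dm \;=\; \int_{\mathcal{E}} \widehat{\phi}\,\overline{\widehat{\psi}} \, d\widehat{\gamma}.
\end{align*}
Since $\mathcal{E}$ is countable, $\widehat{\gamma}_{\mathrm{c}}(\mathcal{E})=0$, while Theorem \ref{theo:dynamical.diffraction.spectra}$(i)$ places all atoms of $\widehat{\gamma}$ inside $\mathcal{E}$; hence $\widehat{\gamma}|_{\mathcal{E}} = \widehat{\gamma}_{\mathrm{p}}$, identifying $\widehat{\gamma}_{X_{\mathrm{p}}} = \widehat{\gamma}_{\mathrm{p}}$ on a dense subspace and so everywhere. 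The same computation with the complementary projector $I-\mathsf{E}(\mathcal{E}) = \mathsf{E}(\widehat{\mathbb{G}}\setminus \mathcal{E})$ gives $\widehat{\gamma}_{X_{\mathrm{c}}} = \widehat{\gamma}|_{\widehat{\mathbb{G}}\setminus\mathcal{E}} = \widehat{\gamma}_{\mathrm{c}}$.

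The main obstacle is the careful measure-theoretic bookkeeping behind the map $\Lambda \mapsto \mathsf{i}(\mu_{\pi(\Lambda)})$: one must check that $\mu_{\pi(\Lambda)}$ is actually supported on $X$ for $m$-almost every $\Lambda$ (so that $\mathsf{i}$ lands in $\mathcal{M}_{(K,M)}(\mathbb{G})$), that the Borelness of the disintegration in vague topology survives composition with $\mathsf{i}$, and that the almost-sure equivariance from $(D3)$ can be upgraded to a genuine Borel $\mathbb{G}$-map by restriction to a $\mathbb{G}$-stable Borel set of full measure, as allowed by the general lemma of \cite{FerKat} cited in Paragraph \ref{paragraph:dynamical.systems}.
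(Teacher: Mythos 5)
Your proposal is correct and follows essentially the same route as the paper: the pure point part is defined as the intensity of the fiber measure $\mu_{\pi(\Lambda)}$ over the maximal Kronecker Borel factor, the continuous part as the difference $\Lambda - \Lambda_{\mathrm{p}}$ (the paper phrases this as $\mathsf{i}(\delta_\Lambda - \mu_{\pi(\Lambda)})$, which is the same measure), and the diffractions are identified via Proposition \ref{prop:projection.and.disintegration} and formula (\ref{formula.diffraction.dynamic.generalized}) applied to $\mathcal{P}=\mathcal{E}$ and its complement. The measure-theoretic caveats you flag at the end are exactly the ones the paper handles by modifying the factor maps on null sets.
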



\begin{proof} Let us begin with the construction of the two desired dynamical systems. First, denoting $\mathcal{E}$ the eigenvalue group of $(X, \mathbb{G},m)$ one has by Proposition \ref{prop.construction.Borel.factor.map} a Borel factor map form $X$ to T$_{\mathcal{E}}$. Composing this factor map with the disintegration of the measure $m$ over T$_{\mathcal{E}}$ gives us a Borel $\mathbb{G}$-map 
\begin{align*} \xymatrixcolsep{3pc}\xymatrix{ \mu ^{\mathrm{p}}: X \ni \Lambda  \ar@{|->}[r] & \mu ^{\mathrm{p}}_\Lambda := \mu_{\pi (\Lambda )} \in \mathcal{M}^b(X) } 
\end{align*}

having values in the subset of probability measures on $X$. Now the Borel factor map form $X$ to T$_{\mathcal{E}}$ dualizes in a $\mathbb{G}$-commuting isometric embedding of $L^2(\text{T}_{\mathcal{E}})$ in $L^2(X,m)$ with range the closed Hilbert subspace $\mathcal{H}_{\mathrm{p}}$ spanned by eigenfunctions of the system $(X, \mathbb{G},m)$. The orthogonal projector onto $\mathcal{H}_{\mathrm{p}}$ is the spectral projector $P_{\mathrm{p}} :=\mathsf{E}(\mathcal{E})$ associated with the Borel subset $\mathcal{E} \subset \widehat{\mathbb{G}}$, and for each $f\in \mathcal{C}(X)$ the $L^2$-class of $\Lambda  \longmapsto \int _X f \, d\mu ^{\mathrm{p}}_\Lambda $ is by Proposition \ref{prop:projection.and.disintegration} equal to $P_{\mathrm{p}}(f)$. On the other hand one has another Borel map, which is straightforwardly shown to be a $\mathbb{G}$-map,
\begin{align*} \xymatrixcolsep{3pc}\xymatrix{ \mu ^{\mathrm{c}} : X \ni \Lambda  \ar@{|->}[r] & \mu ^{\mathrm{c}}_\Lambda := \delta _\Lambda - \mu ^{\mathrm{p}}_\Lambda \in \mathcal{M}^b(X) } 
\end{align*}

with the property that for each $f\in \mathcal{C}(X)$ the $L^2$-class of $\Lambda  \longmapsto \int _X f \, d\mu ^{\mathrm{c}}_\Lambda $ is given by $f -\mathsf{E}(\mathcal{E})(f)= \mathsf{E}(\widehat{\mathbb{G}}\backslash \mathcal{E})(f) = : P_{\mathrm{c}} (f)$, the orthogonal projection of $f$ onto the subspace $\mathcal{H}_{\mathrm{c}}$ orthogonal to $\mathcal{H}_{\mathrm{p}}$. It is obvious that both $ \mu ^{\mathrm{p}}$ and $ \mu ^{\mathrm{c}}$ are valued in the compact $\mathbb{G}$-stable subset $\mathcal{M}^b_2(X)$ of $\mathcal{M}^b(X)$ of signed Radon measures of total variation less of equal to $2$. Denote by $Cv_2(X)$ the image under the intensity map of Proposition \ref{prop:barycenter} of the compact $\mathbb{G}$-stable subset $\mathcal{M}^b_2(X)$: It is then a compact $\mathbb{G}$-stable subset of $\mathcal{M}^{\infty}(\mathbb{G})$. By composing the previous Borel $\mathbb{G}$-maps with the barycenter map one gets two Borel $\mathbb{G}$-maps
\begin{align*} \xymatrixcolsep{3pc}\xymatrix{\pi _{\mathrm{p}} :  X \ni \Lambda \ar[r] & \pi _{\mathrm{p}}(\Lambda) := \mathsf{i}(\mu ^{\mathrm{p}}_\Lambda ) \in Cv_2(X) } \\ \xymatrixcolsep{3pc}\xymatrix{ \pi _{\mathrm{c}} : X \ni \Lambda \ar[r] & \pi _{\mathrm{c}}(\Lambda) := \mathsf{i}(\mu ^{\mathrm{c}}_\Lambda) \in Cv_2(X) }
\end{align*}

Pushforwarding the ergodic measure $m$ by there Borel $\mathbb{G}$-maps yield two ergodic probability measures $m_{\mathrm{p}}$ and $m_{\mathrm{c}}$ supported on $Cv_2(X)$, whose support are two compact $\mathbb{G}$-stable subsets of $Cv_2(X)$ which we denote $X_{\mathrm{p}}$ and $X_{\mathrm{c}}$ respectively. The subset of $\Lambda \in X$ having images $ \pi _{\mathrm{p}}(\Lambda)$ and $\pi _{\mathrm{c}}(\Lambda)$ in $X_{\mathrm{p}}$ and $X_{\mathrm{c}}$ respectively is a full Borel subset, so after possibly modifying the maps $ \pi _{\mathrm{p}}$ and $\pi _{\mathrm{c}}$ on a set of measure 0 of $X$ we can assume that these are valued in $X_{\mathrm{p}}$ and $X_{\mathrm{c}}$ respectively. What we therefore have is two ergodic systems of translation-bounded measures $(X_{\mathrm{p}}, \mathbb{G},m_{\mathrm{p}})$ and $(X_{\mathrm{c}}, \mathbb{G},m_{\mathrm{c}})$ as well as two Borel factor maps 
$$\xymatrixcolsep{3.5pc}\xymatrix{  X  \, \ar@{->}^-{\pi _{\mathrm{p}}}[r] & \, X_{\mathrm{p}} } \qquad \xymatrixcolsep{3.5pc}\xymatrix{  X \, \ar@{->}^-{\pi _{\mathrm{c}}}[r] & \, X_{\mathrm{c}} }$$
Our aim in the sequel is then to show that these two Borel factors satisfy the desired properties. First we point a remark on the summands $\widehat{\gamma}_{\mathrm{p}}$ and $\widehat{\gamma}_{\mathrm{c}}$ of the diffraction $\widehat{\gamma}$: first these are the restriction of $\widehat{\gamma}$ on the Borel subsets $\mathcal{E}$ and $\widehat{\mathbb{G}}\backslash \mathcal{E}$ respectively. Moreover, formula (\ref{formula.diffraction.dynamic.generalized}) writes
\begin{align*} \int _{\mathcal{P}} \widehat{\phi }\overline{\widehat{\psi}} \, d \widehat{\gamma} = \int _X \mathsf{E}(\mathcal{P})(\mathcal{N}_{\phi})\overline{\mathsf{E}(\mathcal{P})(\mathcal{N}_{\psi})} \, dm
\end{align*}

with $\mathsf{E}(\mathcal{P})$ the spectral projector associated to $\mathcal{P}$ on $L^2(X,m)$ so in the particular cases of $\mathcal{P}= \mathcal{E}$ or $\widehat{\mathbb{G}}\backslash \mathcal{E}$ this gives, with the notations $P_{\mathrm{p}} :=\mathsf{E}(\mathcal{E})$ and $P_{\mathrm{c}} :=\mathsf{E}(\widehat{\mathbb{G}}\backslash \mathcal{E})$,
\begin{align}\label{formula.simplification.diffraction} \int _{\widehat{\mathbb{G}}} \widehat{\phi }\overline{\widehat{\psi}} \, d \widehat{\gamma}_{\mathrm{p}} = \int _X P_{\mathrm{p}}(\mathcal{N}_{\phi})\overline{P_{\mathrm{p}}(\mathcal{N}_{\psi})} \, dm \qquad \qquad \int _{\widehat{\mathbb{G}}} \widehat{\phi }\overline{\widehat{\psi}} \, d \widehat{\gamma}_{\mathrm{c}} = \int _X P_{\mathrm{c}}(\mathcal{N}_{\phi})\overline{P_{\mathrm{c}}(\mathcal{N}_{\psi})} \, dm
\end{align}
Let us then show that our two systems have diffraction $\widehat{\gamma}_{\mathrm{p}}'$ and $\widehat{\gamma}_{\mathrm{c}}'$ equal to $\widehat{\gamma}_{\mathrm{p}}$ and $\widehat{\gamma}_{\mathrm{c}}$ respectively. For either $\alpha =\mathrm{p}$ or $\mathrm{c}$ consider
\begin{align*}  \xymatrixcolsep{3pc}\xymatrix{  \mathcal{C}_c(\mathbb{G}) \ni \phi \ar@{|->}[r] & \mathcal{N}^\alpha _\phi  \in \mathcal{C}(X_\alpha )  }, \qquad \mathcal{N}^\alpha _\phi (\Lambda ) := \int \phi _- \, d \Lambda 
\end{align*}

It comes for each $\phi \in \mathcal{C}_c(\mathbb{G})$ that $\mathcal{N}^\alpha _\phi \circ \pi _\alpha $ has $L^2$-class equal to $P_\alpha(\mathcal{N}_\phi )$ in $L^2(X, m)$. Indeed it follows from the series of almost everywhere equalities
\begin{align*}\mathcal{N}^\alpha _\phi \circ \pi _\alpha  (\Lambda)  = \int _{\mathbb{G}} \phi _- \, d\pi _\alpha (\Lambda)  = \int _{\mathbb{G}} \phi _- \, d\, \mathsf{i}(\mu ^\alpha_{\Lambda }) = \int _{X} \left[  \int _{\mathbb{G}} \phi _- \, d\Lambda ' \right] d\mu ^\alpha_{\Lambda }(\Lambda ')  = \int _{X} \mathcal{N} _\phi \,d \mu  ^\alpha_{\Lambda}
\end{align*}

whose $L^2$-class is from what has been said earlier in this proof equal to $P_\alpha(\mathcal{N} _\phi)$ in $ L^2(X, m)$. One therefore has, invoking equality of Theorem \ref{theo:diffraction} combined with (\ref{formula.simplification.diffraction}),
\begin{align*}\int \widehat{\phi}\widehat{\overline{\psi}} \, d\widehat{\gamma}_{\alpha}'  = \int _{X_\alpha} \mathcal{N}^\alpha _\phi \overline{\mathcal{N}^\alpha _\psi} \, dm_\alpha  = \int _{X} P_\alpha(\mathcal{N} _\phi)\overline{P_\alpha(\mathcal{N} _\psi )} \, dm = \int \widehat{\phi}\widehat{\overline{\psi}} \, d\widehat{\gamma}_\alpha 
\end{align*}

yielding the desired equalities. It then remains to show the decomposition statement, that is, we need to show that $\Lambda = \pi _{\mathrm{p}}(\Lambda )+\pi _{\mathrm{c}}(\Lambda)$ for almost every $\Lambda\in X$, but this comes from the straightforward computation for any $\phi \in \mathcal{C}_c(\mathbb{G})$
\vspace{0.2cm}
\begin{align*}\int _{\mathbb{G}} \phi _- \, d(\pi _{\mathrm{p}}(\Lambda )+\pi _{\mathrm{c}}(\Lambda))  = \int _{\mathbb{G}} \phi _- \, d\pi _{\mathrm{p}}(\Lambda )+ \int _{\mathbb{G}} \phi _- \, d\pi _{\mathrm{c}}(\Lambda)  =  \mathcal{N}^{\mathrm{p}} _\phi (\pi_{\mathrm{p}} (\Lambda )) + \mathcal{N}^{\mathrm{c}} _\phi (\pi_{\mathrm{c}} (\Lambda )) 
\end{align*}

which is in turn equal for $m$-almost every $\Lambda \in X$ to
\begin{align*}
P_{\mathrm{p}}(\mathcal{N}_\phi )(\Lambda ) +  P_{\mathrm{c}}(\mathcal{N}_\phi )(\Lambda )  = \mathcal{N} _\phi (\Lambda )   = \int _{\mathbb{G}} \phi _- \, d\Lambda 
\end{align*}

Applying this to a countable dense collection of functions $\phi \in \mathcal{C}_c(\mathbb{G})$ one deduces the existence of a Borel subset of full measure in $X$ such that the equality $\Lambda = \pi _{\mathrm{p}}(\Lambda )+\pi _{\mathrm{c}}(\Lambda)$ occurs, yielding the proof.
\end{proof}

\vspace{0.2cm}

Let us make a few remarks on this result. Here we consider an ergodic system of translation-bounded measures $(X, \mathbb{G},m)$ with eigenvalue group $\mathcal{E}$ and diffraction $\widehat{\gamma}$.

\begin{rem} We shall point a remark about the eigenvalue groups of the two systems yield by Theorem \ref{theo:decomposition}. The first associated system $(X_{\mathrm{p}}, \mathbb{G},m_{\mathrm{p}})$ has by construction pure point diffraction, and thus by Theorem \ref{theo:dynamical.diffraction.spectra} pure point dynamical spectrum, with eigenvalue group $\mathcal{E}_{\mathrm{p}}$ a subgroup of $\mathcal{E}$. This latter inclusion may however be strict: Indeed the set of Bragg peaks of $(X_{\mathrm{p}}, \mathbb{G},m_{\mathrm{p}})$ is precisely the set $\mathcal{S}$ of atoms of $\widehat{\gamma}$, and therefore the eigenvalue group $\mathcal{E}_{\mathrm{p}}$ is the subgroup of $\mathcal{E}$ algebraically generated by $\mathcal{S}$, which might not be the entire group $\mathcal{E}$. On the other hand, The second associated system $(X_{\mathrm{c}}, \mathbb{G},m_{\mathrm{c}})$ has by construction no Bragg peak (not even at $0$), but it remains unclear to us whether its eigenvalue group is trivial or not (this latter must however belong to the group $\mathcal{E}$).
\end{rem}

\begin{rem}\label{remark:autocorrelation} The autoccorelation $\gamma$ of a generic element in $X$ is positive-definite, hence a weakly almost periodic measure on $\mathbb{G}$ \cite{ArgDLa}. As such, its admits a unique decomposition into the sum of a \textit{strongly almost periodic} measure $\gamma_{sap}$ and a \textit{null-weakly almost periodic} one $\gamma_{0wap}$, which are both Fourier transformable, and such that $\widehat{\gamma_{sap}}= \widehat{\gamma }_{\mathrm{p}}$ and $\widehat{\gamma_{0wap}}= \widehat{\gamma }_{\mathrm{c}}$, see \cite{ArgDLa} for definitions and proofs. In our situation the two systems $(X_{\mathrm{p}}, \mathbb{G},m_{\mathrm{p}})$ and $(X_{\mathrm{c}}, \mathbb{G},m_{\mathrm{c}})$ have diffraction $\widehat{\gamma }_{\mathrm{p}}$ and $\widehat{\gamma }_{\mathrm{c}}$ respectively, so it straightly follows that the auto-correlation of a generic element of $(X_{\mathrm{p}}, \mathbb{G},m_{\mathrm{p}})$ is exactly the strong almost periodic part $\gamma_{sap}$ of $\gamma$ while a generic element of $(X_{\mathrm{c}}, \mathbb{G},m_{\mathrm{c}})$ has auto-correlation the null weakly almost periodic part $\gamma_{0wap}$ of $\gamma$.
\end{rem}

\begin{rem}\label{remark:positive} The translation-bounded measures belonging in $(X_{\mathrm{p}}, \mathbb{G},m_{\mathrm{p}})$ are, for almost all of them, the intensity of a certain probability measure on $X$. It then a direct verification that if $X$ consists of positive translation-bounded measures then so does $X_{\mathrm{p}}$. In case translation-bounded measures in $X$ are only signed then the translation-bounded measures of both $X_{\mathrm{p}}$ and $X_{\mathrm{c}}$ are signed as well.\\
\end{rem}


In Theorem \ref{theo:decomposition} just above the uniqueness part, that is, part (ii) in the statement of Theorem 1 in the introduction, is absent. We shall prove this in Paragraph \ref{paragraph:uniqueness.decomposition} below. Before we found elegant to turn this uniqueness statement into an operator-theoretic formalism, as it is done in next paragraph, allowing us to set a proof by only invoking standard arguments from operator theory. 

\subsection{\textsf{From Borel factors to operators}}\label{paragraph:factor.to.operator} ~ 

\vspace{0.2cm} 

Let $(X, \mathbb{G},m)$ be a measured dynamical system of translation-bounded measures, with diffraction to dynamic map
\begin{align*}\xymatrixcolsep{3.5pc}\xymatrix{\Theta  : L^2(\widehat{\mathbb{G}}, \widehat{\gamma} )\, \,  \ar@{<->}[r] & \, \, \mathcal{H}_{\Theta } \subseteq L^2(X,m)}, \qquad \Theta (\widehat{\phi}) = \mathcal{N}_\phi
\end{align*}

for any $\phi \in \mathcal{C}_c(\mathbb{G})$. Recall that $P_\Theta$ stands for the orthogonal projection onto $\mathcal{H}_{\Theta }$. Now assume we are given a dynamical system of translation-bounded measures $(X_{\pi}, \mathbb{G},m_{\pi})$ which is a Borel factor of $(X, \mathbb{G},m)$ under a Borel factor map $\mathrm{\pi } : X  \longrightarrow X_{\pi}$. Suppose in addition that its diffraction measure $\widehat{\gamma}_{\pi}$ is absolutely continuous with respect to $\widehat{\gamma}$, with an essentially bounded Radon-Nikodym differential $f_{\pi}:= \frac{d\widehat{\gamma}_{\pi}}{d\widehat{\gamma}} \in L^{\infty}(\widehat{\mathbb{G}}, \widehat{\gamma})$. Let us denote
\begin{align*}  \xymatrixcolsep{3pc}\xymatrix{  \mathcal{C}_c(\mathbb{G}) \ni \phi \ar@{|->}[r] & \mathcal{N}^{\pi} _\phi  \in \mathcal{C}(X_{\pi})  }, \qquad \mathcal{N}^{\pi} _\phi (\Lambda ) := \int \phi _- \, d \Lambda 
\end{align*}

 We shall here construct a bounded linear operator $Q_{\pi}$ on $L^2(X,m)$, which characterizes the factor map $\pi $, hence the factor system $(X_{\pi}, \mathbb{G},m_{\pi})$ itself, in a unique way:

\vspace{0.2cm} 

\begin{prop}\label{prop:factor.to.operator} Let $\mathrm{\pi } : X  \longrightarrow X_{\pi} $ with $(X_{\pi}, \mathbb{G},m_{\pi})$ as above.
\vspace{0.2cm} 
\begin{itemize}
\item[(i)] There exists a unique $ Q_{\pi}\in \mathcal{B}(L^2(X,m))$ such that $\mathcal{N}^{\pi}_{ \phi }\circ \pi     = Q_{\pi}( \mathcal{N}_\phi )$ in the $L^2$ sense for any $ \phi \in \mathcal{C}_c(\mathbb{G})$ and subject to the condition that $Q_{\pi} = Q_{\pi}P_\Theta$.
\\
\item[(ii)] For $\pi$, $\pi '$ as above then $Q_{\pi} = Q_{\pi '}$ if and only if $ \pi = \pi '$ almost everywhere on $X$.
\\
\item[(iii)] The following equality holds, with $ M_{f_{\pi}}$ the multiplication operator by $f_\pi$ on $L^2(\widehat{\mathbb{G}}, \widehat{\gamma})$,
$$\Theta \circ M_{f_{\pi}} \circ \Theta ^{-1}= Q_{\pi}^* Q_{\pi}$$
\end{itemize}
\end{prop}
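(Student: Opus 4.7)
My plan for part (i) is to define $Q_\pi$ by first prescribing its action on the dense subspace $\{\mathcal{N}_\phi : \phi \in \mathcal{C}_c(\mathbb{G})\}$ of $\mathcal{H}_\Theta$ via $Q_\pi(\mathcal{N}_\phi) := \mathcal{N}^\pi_\phi \circ \pi$, extending by continuity to all of $\mathcal{H}_\Theta$, and then extending by zero on $\mathcal{H}_\Theta^\perp$, which enforces the side condition $Q_\pi = Q_\pi P_\Theta$. The only genuine obstacle is showing that the prescription is well-posed and bounded, and both will follow from the single norm estimate
\[
\|\mathcal{N}^\pi_\phi \circ \pi\|_{L^2(X,m)}^2 = \int_{X_\pi} |\mathcal{N}^\pi_\phi|^2 \, dm_\pi = \int_{\widehat{\mathbb{G}}} |\widehat{\phi}|^2 \, d\widehat{\gamma}_\pi = \int_{\widehat{\mathbb{G}}} |\widehat{\phi}|^2 f_\pi \, d\widehat{\gamma} \leq \|f_\pi\|_\infty \, \|\mathcal{N}_\phi\|_{L^2(X,m)}^2,
\]
obtained by combining, in order, the pushforward property of $m_\pi$ under $\pi$, formula (\ref{formula.diffraction.dynamic}) applied to $(X_\pi, \mathbb{G}, m_\pi)$, the Radon-Nikodym relation $d\widehat{\gamma}_\pi = f_\pi\, d\widehat{\gamma}$, and a final application of (\ref{formula.diffraction.dynamic}) to $(X, \mathbb{G}, m)$. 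Uniqueness of the resulting $Q_\pi$ subject to the side condition is automatic, since it is prescribed on a dense subspace of $\mathcal{H}_\Theta$ and forced to vanish on $\mathcal{H}_\Theta^\perp$.

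For part (ii), if $Q_\pi = Q_{\pi'}$ then $\mathcal{N}^\pi_\phi \circ \pi = \mathcal{N}^{\pi'}_\phi \circ \pi'$ in $L^2(X,m)$ for every $\phi \in \mathcal{C}_c(\mathbb{G})$. Since $\mathbb{G}$ is second countable, the space $\mathcal{C}_c(\mathbb{G})$ is separable; fixing a countable dense subcollection $(\phi_n)$ I obtain a single Borel set of full $m$-measure on which the corresponding equalities $\int \phi_n(-s)\,d\pi(\Lambda)(s) = \int \phi_n(-s)\,d\pi'(\Lambda)(s)$ hold simultaneously for every $n$. A routine density argument on this set then forces $\pi(\Lambda) = \pi'(\Lambda)$ in $\mathcal{M}^\infty(\mathbb{G})$ for $m$-almost every $\Lambda$, and the reverse implication is trivial.

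For part (iii), I will check the identity by comparing sesquilinear forms on the dense subspace $\{\mathcal{N}_\phi\}$: polarizing the computation above yields
\[
\langle Q_\pi^* Q_\pi \mathcal{N}_\phi, \mathcal{N}_\psi \rangle = \langle \mathcal{N}^\pi_\phi \circ \pi, \mathcal{N}^\pi_\psi \circ \pi \rangle = \int_{\widehat{\mathbb{G}}} \widehat{\phi}\, \overline{\widehat{\psi}} \, f_\pi \, d\widehat{\gamma} = \langle \Theta M_{f_\pi} \Theta^{-1} \mathcal{N}_\phi, \mathcal{N}_\psi \rangle,
\]
where the last equality uses the isometry property of $\Theta$ together with the defining relation $\Theta(\widehat{\phi}) = \mathcal{N}_\phi$. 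Both $Q_\pi^* Q_\pi$ and $\Theta M_{f_\pi} \Theta^{-1}$ vanish on $\mathcal{H}_\Theta^\perp$, the former because $Q_\pi = Q_\pi P_\Theta$ gives $Q_\pi^* Q_\pi = Q_\pi^* Q_\pi P_\Theta$, and the latter by construction of $\Theta^{-1}$ as supported on $\mathcal{H}_\Theta$. By density and boundedness the identity then extends to all of $L^2(X,m)$.
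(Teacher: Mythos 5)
Your proposal is correct and follows essentially the same route as the paper: the key norm estimate $\Vert \mathcal{N}^{\pi}_\phi\circ\pi\Vert^2 \leqslant \Vert f_{\pi}\Vert_\infty \Vert\mathcal{N}_\phi\Vert^2$ is exactly the boundedness of the identity map $L^2(\widehat{\mathbb{G}},\widehat{\gamma})\to L^2(\widehat{\mathbb{G}},\widehat{\gamma}_{\pi})$ that the paper uses, and you merely inline the paper's factorization $Q_{\pi}=i_{\pi}\circ\widetilde{Q}_{\pi}\circ P_\Theta$ into a direct definition on the dense subspace $\lbrace\mathcal{N}_\phi\rbrace$. Parts (ii) and (iii) match the paper's arguments as well, with your explicit remark that both operators in (iii) are compressions to $\mathcal{H}_\Theta$ being a welcome clarification.
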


\vspace{0.2cm} 

\begin{proof} (i) First the factor map $\pi $ induces a $\mathbb{G}$-comuting isometry
\begin{align*} \xymatrixcolsep{3.5pc}\xymatrix{i_{\pi}:\, L^2(X_{\pi}, m_{\pi})\ni f\,  \ar@{|->}[r] & f \circ \pi  \in  L^2(X, m) }  
\end{align*}

Denote the diffraction to dynamic map of $(X_{\pi}, \mathbb{G},m_{\pi})$ by 
\begin{align*}\xymatrixcolsep{3.5pc}\xymatrix{\Theta _{\pi} : L^2(\widehat{\mathbb{G}}, \widehat{\gamma}_{\pi})\, \,  \ar@{<->}[r] & \, \, \mathcal{H}_{\Theta _{\pi}} \subseteq L^2(X_{\pi},m_{\pi})}
\end{align*}

Now $\widehat{\gamma}_{\pi} = f_{\pi} \widehat{\gamma}$ for some $f_{\pi} \in L^\infty (\widehat{\mathbb{G}}, \widehat{\gamma})$ and therefore $ \Vert f \Vert _{_{L^2 (\widehat{\mathbb{G}}, \widehat{\gamma}_{\pi})}} \leqslant \Vert f_{\pi} \Vert _{_{\infty , ess}}^{\frac{1}{2}} \Vert f \Vert _{_{L^2 (\widehat{\mathbb{G}}, \widehat{\gamma})}}$ for any $f \in L^2 (\widehat{\mathbb{G}}, \widehat{\gamma})$. This shows that the identity map from $ L^2 (\widehat{\mathbb{G}}, \widehat{\gamma})$ to $ L^2 (\widehat{\mathbb{G}}, \widehat{\gamma}_{\pi})$ is well-defined and a bounded linear map, and thus there exists a unique bounded operator $\widetilde{Q}_{\pi}$ making the diagram commutative
\begin{align*} \xymatrixcolsep{3.5pc}\xymatrixrowsep{3.5pc}\xymatrix{ L^2(\widehat{\mathbb{G}}, \widehat{\gamma})\, \, \ar@{->}_-{\mathrm{id}}[d]  \ar@{<->}^-{\Theta}[r] & \, \, \mathcal{H}_{\Theta }  \ar@{->}^-{\widetilde{Q}_{\pi}}[d]  \\
L^2(\widehat{\mathbb{G}}, \widehat{\gamma}_{\pi})\, \,  \ar@{<->}^-{\Theta _{\pi}}[r] & \, \, \mathcal{H}_{\Theta _{\pi}} } 
\end{align*}

Now the operator $Q_{\pi}:= i_{\pi} \circ \widetilde{Q}_{\pi} \circ P_\Theta$ is a well-defined linear operator of $L^2(X,m)$, which is bounded with operator norm $\Vert Q_{\pi} \Vert _{op} \leqslant \Vert f_{\pi} \Vert _{_{\infty , ess}}^{\frac{1}{2}}$ as it is not difficult to check.By construction one has $Q_{\pi} = Q_{\pi}P_\Theta$, and for any $\phi \in \mathcal{C}_c(\mathbb{G})$ the almost everywhere equalities
\begin{align}\label{elementary.equality.operators} Q_{\pi}( \mathcal{N} _\phi )=  i_{\pi} \circ \widetilde{Q}_{\pi} \circ \Theta (\widehat{\phi}) =  i_{\pi} (\Theta _{\pi} (\widehat{\phi}))) = i_{\pi} (( \mathcal{N}^{\pi} _\phi )) =  \mathcal{N}^{\pi} _\phi  \circ \pi 
\end{align}

Uniqueness of the operator $Q_{\pi}$ having these properties is clear, as it is uniquely defined on the whole subspace $\mathcal{H}_\Theta$ and must vanish on its orthogonal space, giving (i).

(ii) The given equivalence statement easily follows, as one can observe that  $\pi = \pi '$ almost everywhere on $X$ if and only if, for any $\phi \in \mathcal{C}_c(\mathbb{G})$, one has for almost all $\Lambda \in X$
\begin{align*} \int \phi _- \, d \, \pi (\Lambda) =  \int \phi _- \, d \, \pi ' (\Lambda),
\end{align*}

that is, according to equalities (\ref{elementary.equality.operators}), if and only is $Q_{\pi}( \mathcal{N} _\phi ) = Q_{\pi '}( \mathcal{N} _\phi )$ for any $\phi \in \mathcal{C}_c(\mathbb{G})$. This latter condition holds if and only if $Q_{\pi}$ and $Q_{\pi '}$ coincide on the closed subspace $\mathcal{H}_{\Theta }$, and since by construction one has always $Q_{\pi}= Q_{\pi} P_\Theta$, with $P_\Theta$ the orthogonal projection onto $\mathcal{H}_{\Theta }$, this is equivalent to have $Q_{\pi} = Q_{\pi '}$ on all $L^2(X,m)$, as desired.

(iii) For the last point it suffices to check the equality in scalar products against functions $\mathcal{N}_\phi$, $\mathcal{N}_\psi$ for $ \phi , \psi \in \mathcal{C}_c(\mathbb{G})$: One has
\vspace{0.2cm}
\begin{align*} \langle \Theta \circ M_{f_{\pi}} \circ \Theta ^{-1}(\mathcal{N}_\phi ), \mathcal{N}_\psi\rangle _{m} =  \langle M_{f_{\pi}} \widehat{\phi}, \widehat{\psi}\rangle _{\widehat{\gamma}} = \int f_{\pi}\widehat{\phi}\overline{\widehat{\psi}} \, d\widehat{\gamma} = \int \widehat{\phi}\overline{\widehat{\psi}} \, d\widehat{\gamma}_{\pi} = \langle \mathcal{N}^{\pi}_\phi, \mathcal{N}^{\pi}_\psi\rangle _{m_{\pi}}
\end{align*}

on one hand, equal by point (i) of this proof to
\begin{align*} \langle Q_{\pi} ( \mathcal{N}_\phi), Q_{\pi}( \mathcal{N}_\psi) \rangle _m = \langle Q_{\pi}^*Q_{\pi} ( \mathcal{N}_\phi),  \mathcal{N}_\psi \rangle _m 
\end{align*}

which settles the proof.
\end{proof}

\subsection{\textsf{Uniqueness of decomposition of measures}} \label{paragraph:uniqueness.decomposition}  ~ 

\vspace{0.2cm} 

Using the result of the earlier paragraph we shall now prove uniqueness of the decomposition set in Theorem \ref{theo:decomposition}:\\

\begin{theo}\label{theo:uniqueness.decomposition} Let $(X, \mathbb{G},m)$ be an ergodic system of translation-bounded measures. Then the two ergodic systems provided in Theorem \ref{theo:decomposition} exist in a unique way, as well as the two Borel factor maps up to almost everywhere equality.\\
\end{theo}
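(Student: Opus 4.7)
The plan is to recast uniqueness through the operator-theoretic correspondence of Proposition \ref{prop:factor.to.operator}, which reduces uniqueness of factor maps (up to $m$-a.e.\ equality) to uniqueness of the associated bounded operator on $L^2(X,m)$. First I would identify the operators attached to the particular decomposition produced in Theorem \ref{theo:decomposition}. Since $\mathcal{N}^{\mathrm{p}}_\phi \circ \pi_{\mathrm{p}} = P_{\mathrm{p}}(\mathcal{N}_\phi)$ in $L^2(X,m)$ for every $\phi\in \mathcal{C}_c(\mathbb{G})$, and by construction $Q_{\pi_{\mathrm{p}}} = Q_{\pi_{\mathrm{p}}}P_\Theta$, the uniqueness part of Proposition \ref{prop:factor.to.operator}(i) forces
$$Q_{\pi_{\mathrm{p}}} = P_{\mathrm{p}} P_\Theta, \qquad Q_{\pi_{\mathrm{c}}} = P_{\mathrm{c}} P_\Theta,$$
where we use that $P_{\mathrm{p}}$, being a spectral projector, commutes with $P_\Theta$ (since $\mathcal{H}_\Theta$ is $\mathbb{G}$-stable).

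Next, given another pair $(\pi_{\mathrm{p}}', \pi_{\mathrm{c}}')$ of Borel factor maps fulfilling the conclusion of Theorem \ref{theo:decomposition}, set $Q' := Q_{\pi_{\mathrm{p}}'}$ and $R' := Q_{\pi_{\mathrm{c}}'}$. Testing $\Lambda = \pi_{\mathrm{p}}'(\Lambda) + \pi_{\mathrm{c}}'(\Lambda)$ against every $\mathcal{N}_\phi$ and using $Q' = Q' P_\Theta$, $R' = R' P_\Theta$, I would obtain the operator identity $Q' + R' = P_\Theta$. Since the diffractions of the two alleged factor systems are $\widehat{\gamma}_{\mathrm{p}}$ and $\widehat{\gamma}_{\mathrm{c}}$, the associated Radon--Nikodym densities with respect to $\widehat{\gamma}$ are $\mathbb{I}_\mathcal{E}$ and $\mathbb{I}_{\widehat{\mathbb{G}}\setminus \mathcal{E}}$, so Proposition \ref{prop:factor.to.operator}(iii) combined with formula (\ref{formula.spectral projectors}) yields
$$(Q')^*Q' = P_{\mathrm{p}} P_\Theta, \qquad (R')^*R' = P_{\mathrm{c}} P_\Theta.$$

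The decisive step, and the place where the main work is concentrated, is to conclude from these constraints alone that $Q' = P_{\mathrm{p}} P_\Theta$. Substituting $R' = P_\Theta - Q'$ into $(R')^*R' = P_{\mathrm{c}} P_\Theta = P_\Theta - P_{\mathrm{p}} P_\Theta$ and expanding gives, after cancellation,
$$Q' + (Q')^* = 2\, P_{\mathrm{p}} P_\Theta.$$
Writing $Q' = P_{\mathrm{p}} P_\Theta + iB$ with $B$ self-adjoint, the relation $(Q')^*Q' = P_{\mathrm{p}} P_\Theta$ expands into $(P_{\mathrm{p}} P_\Theta)^2 + B^2 + i[P_{\mathrm{p}} P_\Theta, B] = P_{\mathrm{p}} P_\Theta$. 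Since $P_{\mathrm{p}}$ and $P_\Theta$ commute, $(P_{\mathrm{p}} P_\Theta)^2 = P_{\mathrm{p}} P_\Theta$, and taking self-adjoint parts forces $B^2 = 0$; as $B$ is self-adjoint, $B = 0$. This is the technical core: the modulus $(Q')^*Q'$ alone leaves a partial-isometry ambiguity, and the compatibility condition $Q' + R' = P_\Theta$ is what rigidifies it.

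Hence $Q' = P_{\mathrm{p}} P_\Theta = Q_{\pi_{\mathrm{p}}}$, and symmetrically $R' = Q_{\pi_{\mathrm{c}}}$. Proposition \ref{prop:factor.to.operator}(ii) then delivers $\pi_{\mathrm{p}}' = \pi_{\mathrm{p}}$ and $\pi_{\mathrm{c}}' = \pi_{\mathrm{c}}$ $m$-almost everywhere. Uniqueness of the two factor systems themselves follows at once, since $m_\alpha'$ is the push-forward of $m$ under $\pi_\alpha'$ and its support $X_\alpha'$ is accordingly determined.
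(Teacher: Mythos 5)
Your overall strategy coincides with the paper's: both proofs pass through Proposition \ref{prop:factor.to.operator} to trade the factor maps for operators, derive $Q'+R'=P_\Theta$ from the almost-everywhere decomposition $\Lambda=\pi_{\mathrm{p}}'(\Lambda)+\pi_{\mathrm{c}}'(\Lambda)$, and use point (iii) together with formula (\ref{formula.spectral projectors}) to get $(Q')^*Q'=P_{\mathrm{p}}P_\Theta$ and $(R')^*R'=P_{\mathrm{c}}P_\Theta$. The divergence, and the problem, lies in your ``decisive step''. First, expanding $(P_\Theta-Q')^*(P_\Theta-Q')$ gives $P_\Theta-P_\Theta Q'-(Q')^*P_\Theta+(Q')^*Q'$, so the cancellation actually yields $P_\Theta Q'+(Q')^*P_\Theta=2P_{\mathrm{p}}P_\Theta$, not $Q'+(Q')^*=2P_{\mathrm{p}}P_\Theta$. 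Passing from one to the other requires $P_\Theta Q'=Q'$, i.e.\ that the \emph{range} of $Q'$ lies in $\mathcal{H}_\Theta$; Proposition \ref{prop:factor.to.operator} only provides the domain-side condition $Q'=Q'P_\Theta$, and for an arbitrary competing factor map $\pi'$ nothing forces the functions $\mathcal{N}^{\pi'}_\phi\circ\pi'$ to lie in the closed span of the $\mathcal{N}_\psi$. So the decomposition $Q'=P_{\mathrm{p}}P_\Theta+iB$ with $B$ self-adjoint is not yet available.

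Second, even granting that decomposition, the deduction ``taking self-adjoint parts forces $B^2=0$'' is not valid: writing $E:=P_{\mathrm{p}}P_\Theta$, the commutator $[E,B]$ of two self-adjoint operators is skew-adjoint, so $i[E,B]$ is itself self-adjoint, and the identity $B^2+i[E,B]=0$ is already an equality of self-adjoint operators with nothing to separate. The conclusion $B=0$ is nevertheless true, but needs an extra argument: compressing the identity by $E$ and by $I-E$ kills the commutator term and gives $EB^2E=(I-E)B^2(I-E)=0$, hence $BE=B(I-E)=0$. The paper sidesteps both difficulties by taking the polar decompositions $Q'=V\vert Q'\vert$ and $R'=V'\vert R'\vert$, observing that $\vert Q'\vert=P_{\mathrm{p}}P_\Theta$ and $\vert R'\vert=P_{\mathrm{c}}P_\Theta$ are projectors with mutually orthogonal ranges, and evaluating $Q'+R'=P_\Theta$ on a vector $h$ in the range of $P_{\mathrm{p}}P_\Theta$: there $R'h=0$, so $Q'h=h$, forcing $V$ to be the identity on its initial space and hence $Q'=P_{\mathrm{p}}P_\Theta$ (and symmetrically for $R'$). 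You should replace your algebraic manipulation by an argument of this kind.
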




\begin{proof} Let $\mathcal{S}$ be the countable subset of atoms of $\widehat{\gamma}$, with complementary set $\widehat{\mathbb{G}}\backslash \mathcal{S}$. Let then $(X_{\mathrm{p}}', \mathbb{G},m_{\mathrm{p}}')$ and $(X_{\mathrm{p}}', \mathbb{G},m_{\mathrm{p}}')$ be Borel factors of $(X, \mathbb{G},m)$ under two Borel factor maps $\pi_{\mathrm{p}}'$ and $\pi_{\mathrm{c}}'$, with respective diffraction $\widehat{\gamma}_{\mathrm{p}} = \mathbb{I}_{\mathcal{S}}\widehat{\gamma} $ and $\widehat{\gamma}_{\mathrm{p}} = \mathbb{I}_{\widehat{\mathbb{G}}\backslash \mathcal{S}}\widehat{\gamma} $ and such that $\Lambda = \pi_{\mathrm{p}}'(\Lambda) + \pi_{\mathrm{c}}'(\Lambda)$ hold for almost every $\Lambda \in X$, as in Theorem \ref{theo:decomposition}. By Proposition \ref{prop:factor.to.operator} the factor maps $\pi_{\mathrm{p}}'$ and $\pi_{\mathrm{c}}'$ give rise to a pair of operators $Q$ and $Q'$ on $L^2(X,m)$. Then it will be sufficient to show that
\begin{align}\label{Q and Q' projectors} Q = \mathsf{E}(\mathcal{S})P_\Theta , \qquad Q' = \mathsf{E}(\widehat{\mathbb{G}}\backslash \mathcal{S})P_\Theta
\end{align}

Indeed by the equivalence set in Proposition \ref{prop:factor.to.operator} this will readily ensure uniqueness of the pair of Borel factors, and that of Borel factor maps up to almost everywhere equality. To show equalities (\ref{Q and Q' projectors}), first observe that $Q+Q'=P_\Theta$: Indeed this follows from the equalities holding for any $\phi \in \mathcal{C}_c(\mathbb{G})$ and almost every$ \Lambda \in X$
\begin{align*} Q(\mathcal{N}_\phi)(\Lambda)+Q'(\mathcal{N}_\phi)(\Lambda) = \mathcal{N}^{\pi_{\mathrm{p}}'}_\phi (\Lambda)+ \mathcal{N}^{\pi_{\mathrm{c}}'}_\phi (\Lambda) = \int \phi _- \, d \, \pi_{\mathrm{p}}'(\Lambda) + \int \phi _- \, d \, \pi_{\mathrm{c}}'(\Lambda) = \mathcal{N}_\phi (\Lambda)
\end{align*}

the latest equality being given by the almost everywhere equality $\Lambda = \pi_{\mathrm{p}}'(\Lambda) + \pi_{\mathrm{c}}'(\Lambda)$. Second, observe that the assumption on diffraction implies by point (iii) of Proposition \ref{prop:factor.to.operator} that $Q^*Q$ and $Q^{'*}Q'$ are equal to the operators $\Theta \circ M_{\mathbb{I}_{\mathcal{S}}} \circ \Theta ^{-1}$ and $\Theta \circ M_{\mathbb{I}_{\widehat{\mathbb{G}}\backslash \mathcal{S}}} \circ \Theta ^{-1}$ respectively. From formula (\ref{formula.spectral projectors}) one deduces that the absolute values of $Q$ and $Q'$ (see \cite{ReedSimon} for definition) satisfy $\vert Q \vert = \vert Q \vert ^2= Q^*Q = \mathsf{E}(\mathcal{S})P_\Theta$ whereas $\vert Q' \vert = \vert Q' \vert ^2= Q^{'*}Q' = \mathsf{E}(\widehat{\mathbb{G}}\backslash \mathcal{S})P_\Theta$. Now the bounded operators $Q$ and $Q'$ admit a polar decomposition (\cite{ReedSimon}, Theorem VI.10 therein) of in the form $Q = V \vert Q \vert$ and $Q' = V' \vert Q' \vert$, where $V$ and $V'$ are partial isometries with initial spaces the ranges of $\vert Q \vert$ and $\vert Q' \vert$ respectively, that is to say, the ranges of the projectors $\mathsf{E}(\mathcal{S})P_\Theta$ and $\mathsf{E}(\widehat{\mathbb{G}}\backslash \mathcal{S})P_\Theta$. Then to show equalities (\ref{Q and Q' projectors}) it only remains to show that these partial isometries are the identity on their respective initial space. Since $Q+Q'=P_\Theta$ one has for any $h$ in the initial space of $V$ that, reminding that $\mathsf{E}(\mathcal{S})P_\Theta$ and $\mathsf{E}(\widehat{\mathbb{G}}\backslash \mathcal{S})P_\Theta$ have orthogonal ranges, $$V(h)= V \mathsf{E}(\mathcal{S})P_\Theta (h) =  V \mathsf{E}(\mathcal{S})P_\Theta(h) +  V' \mathsf{E}(\widehat{\mathbb{G}}\backslash \mathcal{S})P_\Theta (h)= (Q+Q')(h) =h$$ whereas for any $h'$ in the initial space of $V'$ that $$V'(h')= V' \mathsf{E}(\widehat{\mathbb{G}}\backslash \mathcal{S})P_\Theta (h') =  V \mathsf{E}(\mathcal{S})P_\Theta(h') +  V' \mathsf{E}(\widehat{\mathbb{G}}\backslash \mathcal{S})P_\Theta(h')= (Q+Q')(h') =h'$$
as desired.
\end{proof}

Let us remark that, given dynamical system of translation-bounded measures $(X, \mathbb{G}, m)$ with diffraction $\widehat{\gamma}$, if one drops the requirement of forming a decomposition of (almost every) measures in $X$ then one may exhibit several different Borel factors having diffraction $\widehat{\gamma}_{\mathrm{p}}$ and $\widehat{\gamma}_{\mathrm{c}}$ respectively. Indeed given a pair of such Borel factors it suffices to consider for each another system of translation-bounded measures which is Borel conjugated and \textit{homometric}, that is, with same diffraction, which even in the purely diffractive situation (in this case any two homometric systems are Borel conjugated) seems very likely to exist in general \cite{Ter}.

\subsection{\textsf{Support of positive translation-bounded measures}}  ~ 

\vspace{0.2cm} 

In this paragraph we shall exclusively deal with positive translation-bounded measures. Here we will be interested, given an ergodic system $(X, \mathbb{G},m)$ of positive translation-bounded measures, by the support of the positive, according to Remark \ref{remark:positive}, translation-bounded measures of the system $(X_{\mathrm{p}}, \mathbb{G},m_{\mathrm{p}})$ resulting from Theorem \ref{theo:decomposition}. At first one expect, as a very consequence of the ergodicity property, that the support of these measures should be large. But large in which sense ? For instance, does these measures have \textit{relatively dense} support, or at least almost all of them ? Here a subset $B \subset \mathbb{G}$ is relatively dense if there exists a compact set $K \subset \mathbb{G}$ with $B+K = \mathbb{G}$. This is not a trivial point and it seems judicious to deal with a weaker notion than relative density.


Such a weaker notion is set in Proposition \ref{prop:density} just below. First select a Van Hove sequence $(\mathcal{A}_k)_{k\in \mathbb{N}}$ of $\mathbb{G}$, which always exist in $\sigma$-compact LCA groups \cite{Sch}. We let then such a sequence be chosen. Then one defines the \textit{density} of a subset $B \subseteq \mathbb{G}$ (along the Van Hove sequence $(\mathcal{A}_k)_{k\in \mathbb{N}}$) is, whenever it exists, given by
\begin{align*} dens(B):= \lim _{k\rightarrow \infty} \dfrac{\vert B \cap \mathcal{A}_k \vert}{\vert\mathcal{A}_k \vert}
\end{align*}

This definition of density may differ from the one used elsewhere in the literature, specially for point sets, whose density always vanish in our sense. The density of a subset $B \subseteq \mathbb{G}$ as defined above is, if it exists, a value comprised between $0$ and $1$, and often depends on the choice of Van Hove sequence made at the beginning. Using this one has:

\vspace{0.2cm} 

\begin{prop}\label{prop:density} Let $(X, \mathbb{G}, m)$ be a non-trivial ergodic system of positive translation-bounded measures. Then almost any $\Lambda \in X$ has the following property: for any $\varepsilon > 0$ there is a compact set $K \subset \mathbb{G}$ such that $dens(Supp(\Lambda) +K)$ is greater than $1-\varepsilon$.
\end{prop}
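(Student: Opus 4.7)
The plan is to apply the pointwise ergodic theorem to a countable family of bounded Borel observables of the form $\mathbb{I}_{\mathcal{N}_{\phi}>0}$, and to interpret the resulting time averages as densities of compact thickenings of $Supp(\Lambda)$.

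First, I would use ergodicity to dispose of the zero measure. The element $0\in\mathcal{M}^{\infty}(\mathbb{G})$ is a $\mathbb{G}$-fixed point, so $\{0\}\cap X$ is a Borel $\mathbb{G}$-invariant subset of $X$ and by ergodicity has $m$-measure $0$ or $1$; the non-triviality hypothesis excludes the value $1$, hence $\Lambda\neq 0$ holds for $m$-almost every $\Lambda\in X$. Next, I would fix a nested exhaustion $(U_n)_n$ of $\mathbb{G}$ by relatively compact open sets, $\overline{U_n}\subseteq U_{n+1}$, and select by Urysohn's lemma nonnegative $\phi_n\in\mathcal{C}_c(\mathbb{G})$ with $\{\phi_n>0\}=U_n$. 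Since $\Lambda\in X$ is positive one has $\mathcal{N}_{\phi_n}(\Lambda)=\int\phi_n(-s)\,d\Lambda(s)>0$ if and only if $Supp(\Lambda)\cap(-U_n)\neq\emptyset$, so the Borel sets $A_n:=\{\mathcal{N}_{\phi_n}>0\}$ form an increasing sequence whose union is $\{\Lambda\neq 0\}$, whence $m(A_n)\nearrow 1$.

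Assuming (as allowed by Paragraph~\ref{Par:diffraction} via \cite{Lin}) the Van Hove sequence $(\mathcal{A}_k)_k$ to be tempered, Lindenstrauss's pointwise ergodic theorem applied to the bounded Borel function $\mathbb{I}_{A_n}\in L^1(X,m)$ gives, for $m$-a.e. $\Lambda$,
$$\lim_{k\to\infty}\frac{1}{|\mathcal{A}_k|}\int_{\mathcal{A}_k}\mathbb{I}_{A_n}(\Lambda*\delta_{-t})\,dt=m(A_n).$$
The identity $\mathcal{N}_{\phi_n}(\Lambda*\delta_{-t})=\int\phi_n(t-s)\,d\Lambda(s)$ shows that the set $\{t\in\mathbb{G}:\Lambda*\delta_{-t}\in A_n\}$ is precisely $Supp(\Lambda)+U_n$, so the limit above is exactly $dens(Supp(\Lambda)+U_n)$. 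Setting $K_n:=\overline{U_n}$, which is compact, the inclusion $U_n\subseteq K_n$ yields $dens(Supp(\Lambda)+K_n)\geq m(A_n)$. Intersecting the corresponding countable family of full-measure sets produces a single $m$-conull subset on which, given $\varepsilon>0$, choosing $n$ with $m(A_n)>1-\varepsilon$ and taking $K:=K_n$ delivers the claim.

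The main obstacle I anticipate is the ergodic-theoretic technicality: Lindenstrauss's pointwise theorem requires temperedness of the averaging sequence, which is not automatic for a general Van Hove sequence, and the passage from the density of $Supp(\Lambda)+U_n$ to that of its compact thickening $Supp(\Lambda)+K_n$ must be justified as a genuine limit rather than a $\liminf$. The gap between the two, namely $Supp(\Lambda)+\partial U_n$, can be squeezed by applying the same ergodic argument to an auxiliary function supported slightly beyond $K_n$ (e.g.\ with support in $U_{n+1}$), and subtracting; alternatively one may reformulate the conclusion in terms of lower density, which is what the argument directly produces.
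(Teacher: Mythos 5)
Your proof is correct and follows essentially the same route as the paper's: apply the pointwise ergodic theorem to the indicator of the Borel set $\{\mathcal{N}_{\phi_n}>0\}$, identify the orbit average with $dens(Supp(\Lambda)+U_n)$, and use non-triviality together with the increasing union to force the measures of these sets to tend to $1$. You are in fact slightly more careful than the paper on two points it leaves implicit, namely the temperedness of the averaging sequence and the passage from the open set $U_n$ to a compact thickening.
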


\vspace{0.2cm} 

\begin{proof} Consider an increasing sequence of symmetric open sets $U_n\subset \mathbb{G}$ with compact closure and whose union is all $\mathbb{G}$, and set $Q_n$ to be the subset of $\Lambda \in X$ such that $0 \in Supp(\Lambda)+U_n$. Then each $Q_n$ is Borel in $X$. For, consider a sequence $\phi _n \in \mathcal{C}_c(\mathbb{G})$ of continuous functions, with $\phi _n$ being nowhere vanishing inside $U_n$ and entirely vanishing outside for each $n\in \mathbb{N}$ (such functions always exist because $\mathbb{G}$ is metrizable). Then for each $\Lambda \in X$ and $s\in \mathbb{G}$ there is an equivalence between having $s\notin Supp(\Lambda )+U_n$, $\Lambda (U_n +s)= 0$ and $\int \phi _n (.- s) \, d \Lambda = 0$. Indeed equivalence between the two last conditions is obvious, whereas if $\Lambda (U_n+s) >0 $ then $Supp(\Lambda)$ must cross $U_n+s$, yielding $s\in Supp(\Lambda)-U_n= Supp(\Lambda)+U_n$, and in the other direction if there is an $s$ with $\Lambda (U_n+s)=0$ then $Supp(\Lambda)$ cannot intersect the open set $U_n+s$ and thus $s$ cannot belong to $Supp(\Lambda)+U_n$. As a result, one is allowed to write $Q_n = \mathcal{N}_{\phi _n} ^{-1}(]0; + \infty [)$, which is open and thus Borel in $X$.

Now the Pointwise Ergodic Theorem (see Theorem \ref{theo:Birkhoff}, and \cite{MulRic} for further details), when applied to the Borel indicator function $\mathbb{I}_{Q_n}$, yields a Borel subset of full measure $X^{(n)}\subseteq X$ such that one has for any $\Lambda \in X^{(n)}$ the convergence
\begin{align*}  \xymatrixrowsep{4pc}\xymatrix{\dfrac{1}{\vert \mathcal{A}_k \vert } \displaystyle \int _{\mathcal{A}_k} \mathbb{I}_{Q_n} (\Lambda .s ) \, ds \, \,  \ar@{->}^-{k \, \rightarrow \, \infty}[r] &  \displaystyle \, \, \int _X \mathbb{I}_{Q_n} \, dm \, = \,  m(Q_n) } 
\end{align*}

However $\mathbb{I}_{Q_n} (\Lambda .s ) \neq 0$ if and only if $0 \in Supp(\Lambda .s) + U_n= Supp(\Lambda ) -s + U_n$, that is, $s \in Supp(\Lambda ) + U_n$ so that the left terms are nothing but
\begin{align*} \dfrac{1}{\vert \mathcal{A}_k \vert } \displaystyle \int _{\mathcal{A}_k} \mathbb{I}_{Q_n} (\Lambda .s ) \, ds  \, = \, \dfrac{\vert (Supp(\Lambda ) + U_n) \cap \mathcal{A}_k \vert }{\vert \mathcal{A}_k \vert }
\end{align*}
so that we arrive by taking limit to the equality, which holds for any $\Lambda \in X^{(n)}$,
\begin{align*}  dens ( Supp(\Lambda ) + U_n) = m(Q_n)
\end{align*}
Let us show that $m(Q_n)$ converges to 1: First, an the sequence $U_n$ is increasing in $\mathbb{G}$ the sequence $Q_n$ is also increasing in $X$. Therefore $m(Q_n)$ converges to $m(\bigcup _n Q_n)$. Now a $\Lambda$ not belonging in $ \bigcup _n Q_n$ must satisfy $0 \notin Supp(\Lambda)+U_n$ for each integer $n$, which is only possible for the everywhere trivial measure. Since $(X, \mathbb{G}, m)$ was assumed non-trivial it follows that $\bigcup _n Q_n$ has complementary set of measure 0, so is of measure 1 in $X$, as desired.

As a result, for any $\Lambda$ belonging in the Borel susbset of full measure $X^{(\infty)}$ given by the countable intersection of the $X^{(n)}$ one has the convergence
\begin{align*}  \xymatrixrowsep{4pc}\xymatrix{dens ( Supp(\Lambda ) + U_n) \, = \, m(Q_n) \, \,  \ar@{->}^-{n \, \rightarrow \, \infty}[r] &  \displaystyle \, \, 1  }   \
\end{align*}
which gives the proof.
\end{proof}

This proposition in particular apply for the system $(X_{\mathrm{p}}, \mathbb{G},m_{\mathrm{p}})$ associated by Theorem \ref{theo:decomposition} to any given ergodic system $(X, \mathbb{G}, m)$ of positive translation-bounded measures. Now returning to relative density, to complete the above Proposition we have the following result:


\vspace{0.2cm} 

\begin{prop}\label{prop:relatively.dense.support} Let $(X, \mathbb{G}, m)$ be an ergodic system of positive translation-bounded measures. Assume that almost any measure in $X$ have relatively dense support. Then almost any measure in $X_{\mathrm{p}}$ also have relatively dense support.
\end{prop}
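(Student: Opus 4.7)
The plan is to lift relative density from $X$ to $X_{\mathrm{p}}$ via the construction $\Lambda_{\mathrm{p}}=\mathsf{i}(\mu^{\mathrm{p}}_\Lambda)$ from the proof of Theorem \ref{theo:decomposition}. The key observation to exploit is that, for $m$-almost every $\Lambda$, $\mathrm{supp}(\Lambda_{\mathrm{p}})$ contains $\mathrm{supp}(\Lambda')$ for every $\Lambda'$ in the topological support of the probability measure $\mu^{\mathrm{p}}_\Lambda\in\mathrm{Prob}(X)$; then it will suffice to exhibit at least one such $\Lambda'$ having relatively dense support, and disintegration will provide it.

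First I would verify that $A:=\{\Lambda\in X:\mathrm{supp}(\Lambda)\text{ is relatively dense in }\mathbb{G}\}$ is Borel. Choosing a nested exhaustion $(U_n)_n$ of $\mathbb{G}$ by open symmetric relatively compact sets with $\overline{U_n}\subset U_{n+1}$, a countable dense $D\subset\mathbb{G}$, and open symmetric neighborhoods $W_n$ of $0$ satisfying $U_n+W_n\subseteq U_{n+1}$, a straightforward density argument yields the sandwich
\[\{\Lambda:\mathrm{supp}(\Lambda)+U_n=\mathbb{G}\}\ \subseteq\ \bigcap_{t\in D}\{\Lambda:\Lambda(U_n+t)>0\}\ \subseteq\ \{\Lambda:\mathrm{supp}(\Lambda)+U_{n+1}=\mathbb{G}\}.\]
Each set $\{\Lambda:\Lambda(U_n+t)>0\}$ is open in $X$, being a countable union of level sets $\{\mathcal{N}_\phi>0\}$ for suitable $\phi\in\mathcal{C}_c(\mathbb{G})$ supported in $U_n+t$, so the middle set is Borel and $A=\bigcup_n\bigcap_{t\in D}\{\Lambda:\Lambda(U_n+t)>0\}$ is Borel. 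The hypothesis $m(A)=1$ combined with the disintegration identity $\int_X\mu^{\mathrm{p}}_\Lambda(A)\,dm(\Lambda)=m(A)$ forces $\mu^{\mathrm{p}}_\Lambda(A)=1$ for $m$-almost every $\Lambda$, so that $A\cap\mathrm{supp}(\mu^{\mathrm{p}}_\Lambda)\neq\emptyset$, and I may select $\Lambda'$ in this intersection with an associated compact $K\subset\mathbb{G}$ satisfying $\mathrm{supp}(\Lambda')+K=\mathbb{G}$.

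The key step is then the support inclusion $\mathrm{supp}(\Lambda_{\mathrm{p}})\supseteq\mathrm{supp}(\Lambda')$. For $x\in\mathrm{supp}(\Lambda')$ and an open neighborhood $V$ of $x$, pick $\phi\in\mathcal{C}_c(\mathbb{G})$ non-negative, supported in $V$, and strictly positive near $x$. Then $\int\phi\,d\Lambda'>0$, and $\{\Lambda''\in X:\int\phi\,d\Lambda''>0\}$ is an open neighborhood of $\Lambda'$, hence of positive $\mu^{\mathrm{p}}_\Lambda$-measure. Since all measures in $X$ are positive, the integrand $\Lambda''\mapsto\int\phi\,d\Lambda''$ is non-negative on $X$ and strictly positive on this neighborhood, so by the intensity formula
\[\int\phi\,d\Lambda_{\mathrm{p}}=\int_X\int\phi\,d\Lambda''\,d\mu^{\mathrm{p}}_\Lambda(\Lambda'')>0,\]
giving $\Lambda_{\mathrm{p}}(V)>0$ and $x\in\mathrm{supp}(\Lambda_{\mathrm{p}})$. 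Hence $\mathrm{supp}(\Lambda_{\mathrm{p}})+K=\mathbb{G}$, and since $\pi_{\mathrm{p}}$ pushes this full-$m$-measure subset of $X$ to a full-$m_{\mathrm{p}}$-measure subset of $X_{\mathrm{p}}$, the conclusion follows. The main obstacle will be the Borel-measurability of $A$: the naive definition has an uncountable quantifier over $t\in\mathbb{G}$ that must be traded against a slight enlargement of the cover via a countable dense subset of $\mathbb{G}$; once past this technicality, the rest reduces to a routine application of disintegration and of continuity of the processes $\mathcal{N}_\phi$.
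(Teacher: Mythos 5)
Your proof is correct, and it follows the same broad strategy as the paper (Borel measurability of the relative-density set, disintegration over the maximal Kronecker factor, positivity plus the intensity formula), but two of the key steps are executed differently. For measurability, the paper fixes a single window $U$ and shows that the set $X_U$ of measures with $U$-relatively dense support is Borel by writing the condition as positivity of infima, over countable dense subsets of an exhausting family of compacts, of the continuous functions $s\mapsto\int\phi(\cdot+s)\,d\Lambda$; you instead sandwich $\{\mathrm{supp}(\Lambda)+U_n=\mathbb{G}\}$ between countable intersections indexed by a dense subgroup, at the cost of enlarging $U_n$ to $U_{n+1}$ — both work. The more substantive divergence is at the end: the paper uses that each $X_U$ is $\mathbb{G}$-stable and invokes \emph{ergodicity} to promote one $X_{U_n}$ of positive measure to full measure, so that the fiber measures $\mu_{\pi(\Lambda)}$ concentrate on a single $X_U$ and $\Lambda_{\mathrm{p}}(U+t)=\int_{X_U}\Lambda'(U+t)\,d\mu_{\pi(\Lambda)}(\Lambda')$ is an integral of a strictly positive function; you bypass ergodicity entirely by observing that $\mathrm{supp}(\Lambda_{\mathrm{p}})$ contains $\mathrm{supp}(\Lambda')$ for \emph{every} $\Lambda'$ in the topological support of $\mu_{\pi(\Lambda)}$, so a single witness $\Lambda'\in A\cap\mathrm{supp}(\mu_{\pi(\Lambda)})$ suffices. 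Your route buys a slightly more general argument (it would survive without ergodicity, given only the disintegration) at the price of losing the uniformity the paper obtains, namely a single compact $U$ working for $m_{\mathrm{p}}$-almost every measure in $X_{\mathrm{p}}$ rather than a $\Lambda$-dependent one; for the statement as given this is immaterial. One phrasing caveat: at the very end, rather than pushing forward your full-measure set of good $\Lambda$ (images of Borel sets need not be Borel), conclude by noting that the Borel set $A_{\mathrm{p}}\subseteq X_{\mathrm{p}}$ of measures with relatively dense support has $\pi_{\mathrm{p}}^{-1}(A_{\mathrm{p}})$ containing that full-measure set, whence $m_{\mathrm{p}}(A_{\mathrm{p}})=m(\pi_{\mathrm{p}}^{-1}(A_{\mathrm{p}}))=1$.
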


\vspace{0.2cm} 


\begin{proof} Let $U \subset \mathbb{G}$ be a symmetric open subset with compact closure. Then the collection $X_U \subseteq X$ of translation-bounded measures having $U$-relatively dense support is a $\mathbb{G}$-stable Borel subset of $X$. For, it is obviously $\mathbb{G}$-stable and moreover, from an argument of the proof of Proposition \ref{prop:density},
\begin{align}\label{equivalence} \Lambda \in X_U \qquad \Longleftrightarrow \qquad  \Lambda (U+s) >0 \text{ for each } s\in \mathbb{G}
\end{align}
Now considering a continuous function $\phi \in \mathcal{C}_c(\mathbb{G})$ nowhere vanishing inside $U$ and null outside $U$ (such a function exists since $\mathbb{G}$ is metrizable), one gets that the condition $\Lambda (U+s) >0 $ for any $s\in \mathbb{G}$ is equivalent to have $\int \phi (.+s)\, d \Lambda >0 $ for any $s\in \mathbb{G}$. Now consider a sequence $(K_k)_{k\in \mathbb{N}}$ of compact sets in $\mathbb{G}$ whose union covers $\mathbb{G}$. Since $\int \phi (.+s)\Lambda$ is continuous in the variable $s$, our condition is in turns equivalent to have $inf _{s\in K_k} \int \phi (.+s) \, d\Lambda \geqslant \delta _k $ for some $\delta > 0$, for each $k\in \mathbb{N}$. Again by continuity in the variable $s$ its infimum over $s\in K_k$ can be set on a countable dense subset of $K_k$, independent on $\Lambda$, showing that $F_k (\Lambda) :=  inf _{s\in K_k} \int \phi (.+s) \, d\Lambda $ is the infimum of a countable collection of continuous functions on $X$ and thus is Borel on $X$. One concludes that $X_U$ is Borel by observing that 
\begin{align*} X_U = \bigcap _{k\in \mathbb{N}} F_k ^{-1}\left( ]0, + \infty [\right) 
\end{align*}
Now consider an increasing sequence of symmetric open subset $U_n \subset \mathbb{G}$ with compact closure and whose union covers $\mathbb{G}$. Since almost any measure in $X$ have relatively dense support it follows that the increasing countable union of Borel sets $X_{U_n}$ has measure 1. Therefore some of those must have non zero measure, which we simply denote $X_U$, and since it is a Borel $\mathbb{G}$-stable subset then ergodicity ensure that $m(X_U)=1$. Now the Disintegration Theorem ensures, when applied to the Borel indicator function $\mathbb{I}_{X_U}$, that the set $X_U'$ of $\Lambda \in X$ such that $X_U$ has $\mu _{\pi (\Lambda)}$-measure 1 is also of full measure in $X$. Let moreover $X_U''$ be the Borel subset of full measure in $X$ such that the equality $\Lambda _{\mathrm{p}} = \mathsf{i}(\mu _{\pi (\Lambda)})$ holds. Then for any $\mathsf{\Lambda}$ belonging simultaneously to these three sets one has
\begin{align*} \mathsf{\Lambda}_\mathrm{p}(U +t)  =  \int _X \Lambda (U+t) \, d \mu _{\pi (\mathsf{\Lambda})}(\Lambda) =  \int _{X_U} \Lambda (U+t) \, d \mu _{\pi (\mathsf{\Lambda})}(\Lambda)
\end{align*}
which is the integral of a strictly positive function (by (\ref{equivalence})) with respect to a probability measure, and thus must be strictly positive for any $t\in \mathbb{G}$. Therefore the image of such measures in $X_{\mathrm{p}}$ have $U$-relatively dense support, so that the measures belonging to $X_{\mathrm{p}}$ and having $U$-relatively dense support is of full $m_{\mathrm{p}}$-measure, as desired.
\end{proof}

\section{\textsf{Decomposition of weighted Meyer sets}}\label{Section:weighted.meyer.sets}

In this section we focus on a very particular type of translation-bounded measures. Very often, the translation-bounded measures considered in the literature are \textit{Dirac combs}, with support being, form the most particular to the most general, a subset of a lattice \cite{Baa, BaaBirGri}, a point set with an extra geometric property such as uniform discreteness, finite local complexity or the Meyer property \cite{BaaMoo0, BaaMoo, BaaZin, Str3}, or ultimately a possibly dense (yet countable) subset \cite{Ric, LenRic}. In the remaining part of this work, the translation-bounded measures we will consider are Dirac combs supported on a Meyer set, simply referred as \textit{weighted Meyer sets}. Our main result here is the claim that for an ergodic system of weighted Meyer sets $(X, \mathbb{G},m)$, the two associated systems $(X_{\mathrm{p}}, \mathbb{G},m_{\mathrm{p}})$ and $(X_{\mathrm{c}}, \mathbb{G},m_{\mathrm{c}})$ also consist of weighted Meyer sets.


\subsection{\textsf{Weighted FLC sets}} ~ 

\vspace{0.2cm} 

A point set $S$ of $\mathbb{G}$ is called uniformly discrete if there is an open set $U$ such that any of its translates by an element of $\mathbb{G}$ contains at most one element of $S$. If one considers its difference set $$S-S:= \left\lbrace t-t' \, : \, t, t' \in S\right\rbrace $$ then this means that $0$ in isolated in $S-S$. A uniformly discrete set is called of \textit{finite local complexity} (FLC) if the difference set $S-S$ is closed and all its points are isolated. In studying a particular FLC set $\mathsf{S}$ one usually consider a whole ensemble of FLC sets $X_{\mathsf{S}}$ called its \textit{hull}, stable under the natural $\mathbb{G}$-action shifting sets point by point (see Section 2 of \cite{Sch}). The hull $X_{\mathsf{S}}$ of a FLC set $\mathsf{S}$ is then a compact space with jointly continuous $\mathbb{G}$-action when equipped with a topology, the so-called \textit{local topology}, for which a neighborhood basis at each $S\in X_{\mathsf{S}}$ is yield by
\vspace{0.2cm}
\begin{align*}\mathcal{O}_{U,K}(S):=\left\lbrace  S' \in X_{\mathsf{S}} \, : \, \, \exists \; s\in U \text{ such that } S \cap K \equiv (S ' -s)\cap K \right\rbrace 
\end{align*}

\vspace{0.2cm}
for $0 \in U\subset \mathbb{G}$ open and $K$ compact (see \cite{Sch} for details and a proof). An important remark that we will further need is that whenever a set $S$ belongs to the hull $X_{\mathsf{S}}$ of some FLC set $\mathsf{S}$ then its difference set $S-S$ is included in $\mathsf{S}-\mathsf{S}$.

On the other hand a FLC set $S$ obviously defines a translation-bounded measure $\delta_{S}$ by setting a Dirac mass at any point of $S$. It is therefore natural to look at its associated dynamical system of translation-bounded measures $(X_{\delta _S}, \mathbb{G})$, and as discussed in \cite{BaaLen}, Section 4 therein, this system is conjugated with $(X_{S}, \mathbb{G})$ under the natural map associating a FLC set its corresponding Dirac comb. In the light of this correspondence, a natural generalization of FLC sets is the following:

\vspace{0.2cm} 

\begin{de} A weighted FLC set is a translation-bounded measure supported on a FLC set.
\end{de}

\vspace{0.2cm} 

A weighted FLC set is thus always of the form
\begin{equation*}
 \Lambda = \sum _{t \in S} c_t \delta _t \qquad \mathrm{with} \qquad    \left\{
    \begin{split}
0 \leqslant \vert c_t \vert \leqslant M\\ S \, \, \, \mathrm{ FLC } \, \, \, \mathrm{ set} 
    \end{split}
  \right.
\end{equation*}

Weighted lattices \cite{Baa} are important examples of such measures.

\vspace{0.2cm} 

\begin{prop} If a weighted FLC set $\mathsf{\Lambda}$ is supported on some FLC set $\mathsf{S}$ then any $\Lambda \in X_{\mathsf{\Lambda}}$ is supported on some $S\in X_{\mathsf{S}}$.
\end{prop}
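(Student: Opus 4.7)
The plan is to lift the approximation of $\Lambda$ by translates of $\mathsf{\Lambda}$ to a simultaneous approximation of its support by translates of $\mathsf{S}$, and then use the uniform discreteness inherent in the FLC property to transfer the support inclusion to the limit.

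Let me first fix conventions: if $\Lambda \in \mathcal{M}(\mathbb{G})$ is supported on a Borel set $T$, then for each $t\in\mathbb{G}$ the translate $\Lambda * \delta_t$ is supported on $T+t$, since $(\Lambda*\delta_t)(A) = \Lambda(A-t)$. Pick $\Lambda \in X_{\mathsf{\Lambda}}$. By definition of the hull inside some $\mathcal{M}_{(K,M)}(\mathbb{G})$ (metrizable by Proposition \ref{prop.convexity}), there is a sequence $(t_n)_{n\in\mathbb{N}}$ in $\mathbb{G}$ such that $\mathsf{\Lambda} * \delta_{t_n} \to \Lambda$ in the vague topology. Since the hull $X_{\mathsf{S}}$ is compact and metrizable in the local topology, after passing to a subsequence I may assume in addition that $\mathsf{S}+t_n \to S$ for some $S \in X_{\mathsf{S}}$. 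The whole claim then reduces to showing that $\mathrm{supp}(\Lambda) \subseteq S$.

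Suppose, for contradiction, that some $p \in \mathrm{supp}(\Lambda)$ does not lie in $S$. Since $S$ is uniformly discrete, hence closed, I can pick an open neighborhood $U$ of $p$ with compact closure $\overline{U}$ such that $\overline{U}\cap S = \emptyset$. Using regularity of the topology on the LCA group $\mathbb{G}$ and compactness of $\overline{U}$, there exists an open symmetric neighborhood $V$ of $0$ with $(\overline{U}+V)\cap S = \emptyset$, which is equivalent to $(S-s)\cap \overline{U} = \emptyset$ for every $s\in V$. Now apply convergence $\mathsf{S}+t_n \to S$ in the local topology, with $K$ a compact neighborhood of $\overline{U}$ and $V$ as above: for $n$ large enough, $\mathsf{S}+t_n \in \mathcal{O}_{V,K}(S)$, so there is $s_n\in V$ with $(\mathsf{S}+t_n)\cap K = (S-s_n)\cap K$. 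Intersecting with $\overline{U}$ (which lies in $K$) and using the choice of $V$ yields $(\mathsf{S}+t_n)\cap \overline{U} = \emptyset$ eventually.

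Consequently, for $n$ large the measure $\mathsf{\Lambda} * \delta_{t_n}$, being supported on $\mathsf{S}+t_n$, vanishes on $U$, so that $\int \phi\, d(\mathsf{\Lambda}*\delta_{t_n}) = 0$ for every $\phi \in \mathcal{C}_c(U)$. Passing to the vague limit gives $\int \phi\, d\Lambda = 0$ for every such $\phi$, which forces $\Lambda|_U = 0$ by the Riesz-Markov identification; this contradicts $p \in \mathrm{supp}(\Lambda)$. Hence $\mathrm{supp}(\Lambda)\subseteq S$, and the proof is complete.

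The mildly delicate step is the one using the local topology to transfer the separation between $\overline{U}$ and $S$ to a separation between $\overline{U}$ and $\mathsf{S}+t_n$ eventually; everything else is a routine compactness-and-diagonal-extraction argument. The uniform discreteness built into FLC is precisely what is needed here, since it guarantees that the support of a vague limit of Dirac combs on translates of $\mathsf{S}$ cannot acquire new points outside the limiting FLC set.
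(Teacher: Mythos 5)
Your proof is correct and follows essentially the same route as the paper's: extract a subsequence so that the translated supports $\mathsf{S}+t_n$ converge in the compact, metrizable local topology of $X_{\mathsf{S}}$ to some $S$, then use that convergence to control the supports of the approximating measures on compacta and pass to the vague limit. The only difference is presentational — the paper re-centres the translations so the translated supports agree exactly with $S$ on an exhausting sequence of compacts and concludes directly, whereas you argue by contradiction via a separating neighbourhood — but the key ingredients (compactness of the point-set hull, the local topology, uniform discreteness) are identical.
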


\vspace{0.2cm} 

\begin{proof} Let $\Lambda \in X_{\mathsf{\Lambda}}$ be chosen. It is a vague limit of a sequence of translates $\mathsf{\Lambda}*\delta _{s_n}$ for $(s_n)_{n\in \mathbb{N}}\subset \mathbb{G}$, each respective translate having support in $\mathsf{S}-s_n$. Since $X_{\mathsf{S}}$ is compact for the topology described earlier the sequence $\mathsf{S}-s_n$ must accumulates at some FLC set $S$, and after possibly extracting one can suppose that $\mathsf{S}- s_n$ also converges to $S$ in $X_{\mathsf{S}}$. Select a decreasing sequence $(U_k)_k$ of neighborhoods of $0$ in $\mathbb{G}$ whose intersection  is the origin, and furthermore let $(K_k)_k$ be an increasing sequence of compact sets whose interiors cover $\mathbb{G}$. Thus one is able to extract a subsequence $(s_{n_k})_k$ of $(s_n)_n$ such that for each $k\in \mathbb{N}$ one has $ S\cap K_k \equiv (\mathsf{S}-s_{n_k}-\epsilon _k)\cap K_k $ for some $\epsilon _k \in  U_k$. Let $s_k':= s_{n_k}+ \epsilon _k$: Then $\mathsf{\Lambda}*\delta _{s_k'}$ converges vaguely to $\Lambda$, each having support in $\mathsf{S}-s_k'$ where $S\cap K_k \equiv (\mathsf{S}-s_k')\cap K_k$ for all integer $k\in \mathbb{N}$. Therefore on each compact $K_{k_0}$ the restriction of $\mathsf{\Lambda}*\delta _{s_k'}$ on $K_{k_0}$ is eventually supported on the finite set $S\cap K_{k_0}$, and it follows that the restriction of $\Lambda$ on $K_{k_0}$ is supported on $S\cap K_{k_0}$. Therefore $\Lambda$ is a weighted Dirac comb supported on $S$, as desired.
\end{proof}

\vspace{0.2cm}
 
\begin{rem} Note that if $\mathsf{\Lambda}$ is a weighted FLC set with support the FLC set $S(\mathsf{\Lambda})$, then the mapping associating a $\Lambda \in X_{\mathsf{\Lambda}}$ its support is in general neither valued in $ X_{S(\mathsf{\Lambda})}$ nor continuous between these two spaces. One has in fact that it yields a well-defined and continuous map $X_{\mathsf{\Lambda}} \longrightarrow X_{S(\mathsf{\Lambda})}$ if and only if any non-zero coefficient of the Dirac comb $\mathsf{\Lambda}$ has absolute value bounded from below by some positive constant.\\
\end{rem}

Let $\mathsf{\Lambda}$ be a weighted FLC set. It will later be convenient to have at our disposal a "local topology" description of the topology of the hull $X_{\mathsf{\Lambda}}$. This is provided by considering the collection of subsets of $X_{\mathsf{\Lambda}}\times X_{\mathsf{\Lambda}}$ 
\vspace{0.2cm}
\begin{align*}\mathcal{O}_{U,K,\varepsilon}:=\left\lbrace  (\Lambda, \Lambda ') \in X_{\mathsf{\Lambda}}\times X_{\mathsf{\Lambda}} \, : \, \,  \inf _{s\in U} \left[  \sup _{v\in K}\left| \Lambda (v) - \Lambda ' (v+s)\right| \right]  < \varepsilon \right\rbrace 
\end{align*}

\vspace{0.2cm}
for $0 \in U\subset \mathbb{G}$ open, $K$ compact and $\varepsilon > 0$, which constitutes a basis of a uniformity on $X_{\mathsf{\Lambda}}$ (see \cite[Chapter 6]{Kelley} for more on uniformities). Here and all along this section we note $\Lambda (v)$ for the $\Lambda $-measure of the singleton $\lbrace v\rbrace$. This defines a unique topology $\mathcal{T}_{loc}$ which we shall call the local topology of $X_{\mathsf{\Lambda}}$, such that a neighborhood basis at any $\Lambda \in X_{\mathsf{\Lambda}}$ is provided by $\mathcal{O}_{U,K,\varepsilon}(\Lambda) :=\left\lbrace  \Lambda ' \in X_{\mathsf{\Lambda}} \, : \, \,  (\Lambda , \Lambda ') \in \mathcal{O}_{U,K,\varepsilon} \right\rbrace$ for $0 \in U\subset \mathbb{G}$ open, $K$ compact and $\varepsilon > 0$ \cite[Chapter 6, Theorem 5]{Kelley}. Since $\mathbb{G}$ is $\sigma$-compact and $1^{st}$ countable one easily extracts a countable family of sets $\mathcal{O}_{U,K, \varepsilon }$ forming a basis of the same uniformity, and by \cite[Chapter 6, Theorem 13]{Kelley} the local topology is consequently metrizable.
 
 \vspace{0.2cm}
 
\begin{prop}\label{prop:equivalence.topologies} Let $\mathsf{\Lambda}$ be a weighted FLC set. Then the vague topology coincides with the local topology on $X_{\mathsf{\Lambda}}$. In particular $(X_{\mathsf{\Lambda}},\mathcal{T}_{loc}) $ is compact.
\end{prop}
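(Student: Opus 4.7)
My plan is to prove the proposition by showing that the identity map $(X_{\mathsf{\Lambda}}, \mathcal{T}_{vague}) \longrightarrow (X_{\mathsf{\Lambda}}, \mathcal{T}_{loc})$ is continuous. Since $X_{\mathsf{\Lambda}}$ is vaguely compact by Proposition \ref{prop.convexity} and the local topology is Hausdorff (a short verification using uniform discreteness inherited from $\mathsf{S}$: if atoms of two measures sit in successive entourages $\mathcal{O}_{U_n,K,\varepsilon_n}$ with $U_n\downarrow\{0\}$, $\varepsilon_n\downarrow 0$, then the FLC property forces their atoms in $K$ to coincide in both position and weight), a continuous bijection from a compact space to a Hausdorff space is automatically a homeomorphism, which yields both the coincidence of the two topologies and the compactness assertion.

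To establish continuity it suffices to work sequentially, both topologies being metrizable. Suppose $\Lambda_n\to\Lambda$ vaguely in $X_{\mathsf{\Lambda}}$. By the preceding Proposition each $\Lambda_n$ is supported on some $S_n\in X_{\mathsf{S}}$, and $\Lambda$ on some $S\in X_{\mathsf{S}}$. The space $X_{\mathsf{S}}$ is compact for the local topology on FLC sets (see Schlottmann \cite{Sch}), so after extraction one may assume $S_n\to S^{*}$ in $X_{\mathsf{S}}$, and a cut-off function argument combined with vague convergence of $\Lambda_n$ quickly shows that $\mathrm{supp}(\Lambda)\subseteq S^{*}$. By the usual subsequence-of-subsequence argument it is then enough to prove that $\Lambda_n\in\mathcal{O}_{U,K,\varepsilon}(\Lambda)$ for large $n$ along this subsequence.

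Fix $U,K,\varepsilon$. The local convergence $S_n\to S^{*}$ produces, for a large compact $K'\supset K$, shifts $s_n\in U$ with $S^{*}\cap K' = (S_n - s_n)\cap K'$. Let $r>0$ be a uniform discreteness radius shared by all elements of $X_{\mathsf{S}}$. For each of the finitely many atoms $v\in S^{*}\cap K$ choose a continuous bump $\phi_v$ supported in the open ball of radius $r/3$ around $v$ with $\phi_v(v)=1$; uniform discreteness ensures that $\Lambda$ has no other atom in that ball, so $\int\phi_v\,d\Lambda = \Lambda(v)$, and for $n$ large the unique atom of $\Lambda_n$ in that ball is $v+s_n$, giving $\int\phi_v\,d\Lambda_n = \phi_v(v+s_n)\Lambda_n(v+s_n)$. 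Vague convergence then forces $\Lambda_n(v+s_n)\to\Lambda(v)$. For $v\in K\setminus S^{*}$ both $\Lambda(v)$ and $\Lambda_n(v+s_n)$ vanish by the matching of supports on $K'$. Taking the maximum over the finite set $S^{*}\cap K$ yields $\sup_{v\in K}|\Lambda(v)-\Lambda_n(v+s_n)|<\varepsilon$ for $n$ large, i.e.\ $\Lambda_n\in\mathcal{O}_{U,K,\varepsilon}(\Lambda)$.

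The delicate point is the need for a \emph{single} shift $s_n\in U$ fitting simultaneously all atoms of $\Lambda$ in $K$; this is precisely what the local-topology compactness of $X_{\mathsf{S}}$ delivers. Once that geometric alignment is in place, the weight comparison reduces to a standard bump-function test for vague convergence, made tractable by the finiteness of $S^{*}\cap K$ inherited from uniform discreteness.
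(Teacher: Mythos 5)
Your argument follows essentially the same route as the paper's: continuity of the identity map from the vague to the local topology, combined with vague compactness and Hausdorffness of the local topology, then a subsequence-of-subsequences reduction, alignment of supports via compactness of $X_{\mathsf{S}}$ in its local topology, and a bump-function test at the finitely many atoms in $K$. One small point to tighten: the step ``vague convergence forces $\Lambda_n(v+s_n)\to\Lambda(v)$'' from $\int\phi_v\,d\Lambda_n=\phi_v(v+s_n)\Lambda_n(v+s_n)$ needs either $s_n\to 0$ (available by invoking the local convergence $S_n\to S^{*}$ along a shrinking sequence of neighborhoods of $0$, not just the fixed $U$) or a bump $\phi_v$ identically equal to $1$ on a neighborhood of $v$ containing $v+s_n$, which is the device the paper uses; with only $\phi_v(v)=1$ and $s_n\in U$ the prefactor $\phi_v(v+s_n)$ need not tend to $1$.
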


\vspace{0.2cm}



\begin{proof} We only need to show the continuity of the identity map from $(X_{\mathsf{\Lambda}},\mathcal{T}_{vague}) $ to $(X_{\mathsf{\Lambda}},\mathcal{T}_{loc}) $, and the result will follows from compacity of the former topology and Hausdorff property (which is straightforward to show since elements of $X_{\mathsf{\Lambda}}$ are atomic measures) of the latter. Since both topologies are metrizable it suffices to show that is a sequence $(\Lambda _n)_n$ converges vaguely to $\Lambda$ then it converges to $\Lambda$ in the local topology. Let thus a sequence $(\Lambda _n)_n$ converging vaguely to a limit $\Lambda$: To show that this sequence also converges to $\Lambda$ in the local topology is is sufficient to show that from any subsequence can be extracted another subsequence converging to $\Lambda$ in the local topology.

Denote by $\mathsf{S}$ the support of the weighted FLC set $\mathsf{\Lambda}$. Then let $(\Lambda _{n_k})_k$ be a subsequence, which still converges vaguely to $\Lambda$. Repeating the argument of the proof of Proposition \ref{} one can extract a subsequence $(\Lambda_{n_{k_l}})_l$ (still vaguely converging to $\Lambda$), as well a a sequence $(S_{k_l})_l$ in $X_{\mathsf{S}}$ with limit $S\in X_{\mathsf{S}}$, such that: $\Lambda_{n_{k_l}}$ is supported on $S_l$, $\Lambda$ is supported on $S$, and for a given decreasing sequence $(U_l)_l$ of neighborhoods of $0$ in $\mathbb{G}$ whose intersection is the origin and a given increasing sequence $(K_l)_l$ of compact sets whose interiors cover $\mathbb{G}$ one has for each $l\in \mathbb{N}$ that $$ S\cap K_l \equiv (S_{k_l}-\epsilon _l)\cap K_l \; \; \text{ for some } \; \; \epsilon _l \in  U_l$$
We claim now that the subsubsequence $(\Lambda_{n_{k_l}})_l$ converges to $\Lambda$ in the local topology: Indeed for any $0 \in U\subset \mathbb{G}$ open, $K$ compact and $\varepsilon > 0$ there exists an integer $L$ such that whenever $l\geqslant L$ then $$ S\cap K \equiv (S_{k_l}-s _l)\cap K \; \; \text{ for some } \; \; s _l \in  U_l \subseteq U$$

Therefore for $l\geqslant L$ each $\Lambda _{n_{k_l}}$ admits a $s _l \in U$ such that $\Lambda$ and $\Lambda _{n_{k_l}} * \delta _{s_l}$ are all, when restricted to the compact set $K$, supported on the finite set $S\cap K$, that is to say, such that 
\begin{align*}\left| \Lambda (v) - \Lambda _{n_{k_l}} (v-s_l)\right| =\left| \Lambda (v) - \Lambda _{n_{k_l}} * \delta _{s_l} (v)\right| = 0  < \varepsilon \qquad \forall \; v\in K \setminus S
\end{align*}

Hence we will be done once we find an integer $L'\geqslant L$ such that for $l\geqslant L'$ one also has $\vert \Lambda (p) - \Lambda _{n_{k_l}} (p- s_l)\vert  < \varepsilon$ at points $p \in S\cap K$. To do so consider an open set $U'$ such that $(S\cap K - S\cap K) \cap U' = \left\lbrace 0\right\rbrace $, and moreover consider an open set $0 \in U''\subset \mathbb{G}$ whose closure is strictly included in $U'$: Then one can exhibit a compactly supported continuous function $\phi$ identically equal to $1$ on $U''$ and vanishing outside $U'$, and we thus let $L'\geqslant L$ be such that 
\begin{align*} -s_l \in U'' \quad \text{ and } \quad  \left| \int \phi * \delta _p \, d\Lambda -  \int \phi * \delta _p \, d\Lambda _{n_{k_l}} \right|  < \varepsilon \qquad \forall \; p \in S \cap K \; \text{ and } \; l \geqslant L'
\end{align*}

which exists since $(s_l)_l$ converges to $0$ and $(\Lambda _{n_{k_l}} )_l$ converges vaguely to $\Lambda$. From our particular choice of $\phi$, for such an $L'$ one has for any $p \in S \cap K$ that
\begin{align*} \left| \Lambda (p) - \Lambda _{n_{k_l}} (p-s_l)\right| =   \left| \int \phi * \delta _p \, d\Lambda -  \int \phi * \delta _p \, d\Lambda _{n_{k_l}} \right|  < \varepsilon
\end{align*}

This shows that the subsubsequence $(\Lambda _{n_{k_l}})_l$ converges to $\Lambda$ in the local topology, which finishes the proof.
\end{proof}

\subsection{\textsf{Weighted Meyer sets and Cut $\& $ Project representations}} ~ 

\vspace{0.2cm}

A strengthening of the FLC property for point sets is to require two additional facts: First the uniformly discrete set $S$ should be relatively dense, meaning that there exists a compact set $K$ such $S+K$ covers the whole group $\mathbb{G}$ (we call such set a \textit{Delone set}), and second the difference set $S-S$ should itself be uniformly discrete. A uniformly discrete obeying these two additional properties is called a \textit{Meyer set}. Meyer sets have several different by equivalent characterizations, see \cite{Moo} as well as \cite{Lag, Lag2} for the case $\mathbb{G}= \mathbb{R}^d$ and the improvement \cite{Str3} for the general case. If $\mathsf{S}$ is a Meyer set then as an FLC set it has a hull $X_{\mathsf{S}}$, and it is not hard to show that any $S\in X_{\mathsf{S}}$ is also a Meyer set.\\

We now present the widely studied \textit{Cut $\&$ Project formalism} for point sets, see the different works \cite{Sch, HucRic}. Suppose we are given a triple $(H, \Gamma, s_H)$ where $H$ is a LCA group, $\Gamma$ a finitely generated subgroup of $\mathbb{G}$ and a group morphism $ \xymatrixcolsep{3pc}\xymatrix{ s_H: \Gamma \ar[r] & H  }$ with range $ s_H(\Gamma)$ dense in $H$, and whose graph $\mathcal{G}(s_H):= \left\lbrace (s_H(t),t)\in H \times \mathbb{G}\, : \, t\in \Gamma\right\rbrace$ is furthermore a lattice, that is, a discrete and co-compact subgroup of $H\times \mathbb{G}$. Such a triple is called a \textit{cut $\&$ project scheme} (CPS for short). The LCA group $H$ of a CPS is commonly called the internal space (or internal group), the subgroup $\Gamma$ of $\mathbb{G}$ the structure group of the CPS, and the morphism $s_H$ the *-map of the CPS. In addition,  compact topologically regular subset $W$ of $H$, that is, a compact set which is the closure of its interior $\mathring{W}$ in $H$, will be called a \textit{window}.

\vspace{0.2cm} 

\begin{de}\label{def:cut.and.project.representation} Let $S$ be a point set of $\mathbb{G}$. We call Cut $\&$ Project representation of $S$ any CPS $(H, \Gamma, s_H)$ such that $S$ belongs to $\Gamma$, and whose image $s_H(S)$ is relatively compact in $H$.
\end{de}

\vspace{0.2cm} 

Not all point sets admit a Cut $\&$ Project representation, see Theorem \ref{theo:existence.cut.project.representation} below. Whenever a point set $S$ belongs to $\Gamma$ for some CPS $(H, \Gamma, s_H)$ then it can thus be lifted in a subset of a lattice (namely $\mathcal{G}(s_H)$) in $H \times \mathbb{G}$, and the condition of relative compactness for $s_H(S)$ means that this lifting stand in a "not too thick" strip about $\mathbb{G}$ (when naturally embedded into $H \times \mathbb{G}$). Given some point set $S$, We will call a Cut $\&$ Project representation $(H, \Gamma, s_H)$ of $S$ \textit{irredundant} whenever the closure $W_S$ of $s_H(S)$ in $H$ is irredundant in $H$, that is, if there is no non-trivial element $w$ of $H$ satisfying $W_S + w= W_S$. It is always possible to turn a given Cut $\&$ Project representation into an irredundant one by simply modding out a certain compact subgroup of $H$, see \cite{BaaLenMoo, LeeMoo} for details and a proof. Thus a point set having a Cut $\&$ Project representation automatically also admits an irredundant Cut $\&$ Project representation.

\vspace{0.2cm} 

\begin{de}\label{defmodelset} A closed model set is a point set obtained from a CPS $(H, \Gamma, s_H)$ and a window $W$ in $H$ by
\begin{align*}\mathfrak{P}_{H}(W):= \left\lbrace t \in \Gamma \; : \; s_H(t ) \in W \right\rbrace 
\end{align*}

\end{de}

\vspace{0.2cm} 

It is a true fact that a closed model set is always a Meyer set. Moreover a closed model set $S$ always comes with a Cut $\&$ project representation, namely that of the CPS used to construct it, where $s_H(S)$ a compact closure $W$ in $H$.\\

\begin{theo}\label{theo:existence.cut.project.representation} Let $S$ be a point set of $\mathbb{G}$. The following assertions are equivalent:

\vspace{0.2cm}

$(i) \; S$ is a subset of a Meyer set,

$(ii) \; S$ admits a Cut $\&$ Project representation,

$(iii) \; S $ is a subset of a closed model set.\vspace{0.2cm}\\
If it holds true then the Cut $\&$ Project representation can be chosen irredundant.\\
\end{theo}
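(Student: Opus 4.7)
The plan is to establish the equivalence by running around the cycle $(iii) \Rightarrow (i)$, $(iii) \Rightarrow (ii)$, $(ii) \Rightarrow (iii)$, and finally the substantive direction $(i) \Rightarrow (ii)$; the statement on irredundance will be added as a separate short argument at the end.

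Two of the implications are immediate. The implication $(iii) \Rightarrow (i)$ has in fact been recalled in the text just above the statement: every closed model set is a Meyer set. The implication $(iii) \Rightarrow (ii)$ is just as easy, since if $S \subseteq \mathfrak{P}_H(W)$ for a CPS $(H,\Gamma,s_H)$ and a window $W$, then $S\subseteq \Gamma$ and $s_H(S)\subseteq W$ is relatively compact, so the very same CPS qualifies as a Cut $\&$ Project representation of $S$.

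For $(ii) \Rightarrow (iii)$, I would start from a CPS $(H,\Gamma,s_H)$ with $s_H(S)$ relatively compact, and turn its closure into a genuine window by a mild thickening: choosing an open neighbourhood $V$ of $0$ in $H$ with compact closure, the set
\[
W \, := \, \overline{\,s_H(S) + V\,}
\]
is a compact subset of $H$ whose interior contains the open set $s_H(S)+V$, so that $W = \overline{\mathring{W}}$ and $W$ is topologically regular in the sense of the definition. Visibly $S \subseteq \mathfrak{P}_H(W)$, hence $(iii)$ holds.

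The main obstacle and the core of the theorem is the remaining implication $(i)\Rightarrow (ii)$. Given $S\subseteq M$ with $M$ a Meyer set of $\mathbb{G}$, what is needed is the production of an internal LCA group $H$, a finitely generated (or countable) subgroup $\Gamma$ of $\mathbb{G}$ containing $M$, and a morphism $s_H : \Gamma \to H$ with dense range and lattice graph, such that $s_H(M)$ is relatively compact in $H$. This is exactly the content of Meyer's structure theorem for harmonious sets, extended from $\mathbb{R}^d$ to the $\sigma$-compact LCA setting by Strungaru; I would invoke it directly as proven in \cite{Meyer, Moo, Lag, Lag2, Str3} rather than reproduce its harmonic-analysis proof here. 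Once this CPS is in hand, the inclusion $s_H(S)\subseteq s_H(M)$ yields relative compactness of $s_H(S)$, giving $(ii)$.

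For the final claim, starting from any CPS representation $(H,\Gamma,s_H)$ provided by the cycle, the stabilizer $H_0 := \{w\in H : w + \overline{s_H(S)} = \overline{s_H(S)}\}$ is a compact subgroup of $H$. Quotienting $H$ by $H_0$ and composing $s_H$ with the projection $H\to H/H_0$ produces, by the standard procedure detailed in \cite{BaaLenMoo, LeeMoo}, a CPS in which the closure of the image of $S$ is irredundant, yielding the required irredundant Cut $\&$ Project representation of $S$.
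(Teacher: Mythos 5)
Your proposal is correct and follows essentially the same route as the paper: the hard implication $(i)\Rightarrow(ii)$ is delegated to Meyer/Strungaru's structure theorem applied to a Meyer superset, $(ii)\Rightarrow(iii)$ is obtained by enclosing $s_H(S)$ in a compact topologically regular window, $(iii)\Rightarrow(i)$ uses that closed model sets are Meyer, and irredundance is obtained by quotienting out the stabilizer of $\overline{s_H(S)}$ as in the cited references. Your explicit construction $W=\overline{s_H(S)+V}$ is a harmless concretization of the paper's ``any compact topologically regular subset containing $s_H(S)$''.
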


The hard part of this Theorem is $(i)\Rightarrow (ii)$, which is sufficient to prove when $S$ is itself a Meyer set, and which was proved by Meyer \cite[Chapter II, Section 5, Proposition 4]{Meyer}, later on followed by several works \cite{Lag, Moo, BaaMoo, Auj}. Assuming this let us then provide a short proof of the remaining statements:\\

\begin{proof} Assume $(i)$: Then $S$ is contained in some Meyer set $S'$, which by \cite{Str3}, Theorem ? admits a Cut $\&$ Project representation $(H, \Gamma, s_H)$. It follows that the closure $W_S$ of $s_H(S)$ is contained in the closure of $s_H(S')$ which is compact in $H$, and therefore $(H, \Gamma, s_H)$ is a Cut $\&$ Project representation of $S$, giving $(ii)$. Then assuming $(ii)$ one gives rise by considering any compact topologicaly regular subset $W$ of $H$ containing $s_{H}(S)$ to a closed model set $\mathfrak{P}_{H}(W):= \left\lbrace t \in \Gamma \; : \; s_{H}(t ) \in W \right\rbrace$ which contains $S$, yielding $(iii)$. As any closed model set is a Meyer set this immediately gives $(i)$.

Finally, following \cite{BaaLenMoo}, Section 9 therein, given a Cut $\&$ Project representation $(H, \Gamma, s_H)$ of $S$ with closure $W_S$ of $s_H(S)$ in $H$ one mods out the compact subgroup $K_S$ of elements $w$ of $H$ such that $W_S +w= W_S$, which results in a an irredundant representation $(H', \Gamma, s_{H'} )$ of $S$ with internal group $H'=H$ modulo $K_S$, $*-$map $s_{H'}= s_{H}$ modulo $K_S$ and $s_{H'}(S)$ having closure the compact subset $W_S'=W_S$ modulo $K_S$.
\end{proof}

\vspace{0.2cm}

\begin{de} A weighted Meyer set is a translation-bounded measure supported on a Meyer set.
\end{de}

\vspace{0.2cm} 

A weighted Meyer set is hence always of the form
\begin{equation*}
 \Lambda = \sum _{t \in S} c_t \delta _t \qquad \mathrm{with} \qquad    \left\{
    \begin{split}
0 \leqslant \vert c_t \vert \leqslant M\\ S \, \, \, \mathrm{ Meyer } \, \, \, \mathrm{ set} 
    \end{split}
  \right.
\end{equation*}

Obviously a weighted Meyer set is always a weighted FLC set, and its support admits \textit{by definition} a Meyer super-set containing it. A weighted Meyer set may fail to have a Meyer set support since it may not be relatively dense. Any subset (even the finite ones) of a Meyer set then yields a Dirac comb which is according to this definition a weighted Meyer set. A weighted Meyer set could have also been called \textit{weighted model set}, but since this terminology makes explicitly reference to a Cut $\&$ Project scheme representing it we preferred the former appellation.

\subsection{\textsf{The torus parametrization of a Cut $\& $ Project representation}} ~ 

\vspace{0.2cm} 

In \cite{BaaHerPle} the authors illustrated in a particular case a relation, which is of main importance here and more generally in the whole point sets theory, between Cut $\&$ Project representations of a Meyer set $S$ and its associated dynamical system $(X_S, \mathbb{G})$: They namely showed that existence of a Cut $\&$ Project representation of $S$ always gives rise to a Kronecker factor of the system $(X_S, \mathbb{G})$ (and thus to a group of eigenvalues for $(X_S, \mathbb{G})$). Such result has later on been stated and proved in its greatest generality by Schlottmann in \cite{Sch}, Section 4 therein. 

To be more precise, it is shown in \cite{Sch} that if a \textit{repetitive} Meyer set $S$ admits an irredundant Cut $\&$ Project representation $(H, \Gamma , s_H)$ then there is a (continuous) factor map from $X_S$ onto $\left[ H\times \mathbb{G}\right] _{\mathcal{G}(s_H)}  $, the compact Abelian group yield by the quotient of $ H\times \mathbb{G}$ by its subgroup $\mathcal{G}(s_H)$ and endowed with the $\mathbb{G}$-action "by rotation", set for $s\in \mathbb{G}$ on an element $[w,t]$ by $[w,t] .s := [w,t+s]$. Here repetitivity of a Meyer set means minimality of its dynamical system $(X_S, \mathbb{G})$. The factor map from $X_S$ onto the Kronecker factor issued from a Cut $\&$ Project representation of $S$ is called a \textit{torus parametrization}.

We propose here to set this result in a slightly more general situation, namely that of weighted Meyer sets, by carefully adapting the proof of \cite{Sch} to our setting. The original proof has been set with the assumption of repetitivity on the Meyer sets, and we shall not assume this here. As a result, the torus parametrization yield by a Cut $\&$ Project representation of a weighted Meyer set $\mathsf{\Lambda}$ will no longer be defined on the entire hull $X_{\mathsf{\Lambda}}$ but rather only on the rubber local isomorphism class RLI($\mathsf{\Lambda}$) (one will notices that repetitivity of $\mathsf{\Lambda}$, that is to say, minimality of $(X_{\mathsf{\Lambda}}, \mathbb{G})$, precisely means that RLI($\mathsf{\Lambda})=X_{\mathsf{\Lambda}}$). This will however be sufficient to prove the main result of this section, namely Theorem \ref{theo:decomposition.weighted.meyer.sets} in next paragraph (Theorem 4 in the introductory part).\\

\begin{theo}\label{theo:parametrization} Let $\mathsf{\Lambda}$ be a weighted Meyer set with support $S(\mathsf{\Lambda})$ having an irredundant Cut $\& $ Project representation $(H, \Gamma, s_H)$, with $W$ the closure of $s_H(S(\mathsf{\Lambda}))$ in $H$.\\
\\
$(i)$ For each $\Lambda \in X_{\mathsf{\Lambda}}$ there exists $ (w,t) \in H\times \mathbb{G}$ such that $S(\Lambda) \subseteq \mathfrak{P}_H(W +w)+t$,\\
\\
$(ii)$ When $\Lambda \in \mathrm{RLI}(\mathsf{\Lambda})$ the element $(w,t) \in H\times \mathbb{G}$ is unique modulo $\mathcal{G}(s_H)$,\\
\\
$(iii)$ The mapping $\xymatrixcolsep{2pc}\xymatrix{ \mathrm{RLI}(\mathsf{\Lambda}) \ni \Lambda \, \ar@{|->}[r] & \, [w,t] \in \left[ H\times \mathbb{G}\right] _{_{\mathcal{G}(s_H)}}  }$ is a continuous $\mathbb{G}$-map.\\
\end{theo}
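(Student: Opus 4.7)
The strategy is an adaptation of Schlottmann's torus parametrization argument \cite{Sch}, modified so as not to rely on minimality/repetitivity, but only on the fact that the mapping is defined on the single RLI-class of $\mathsf{\Lambda}$. Throughout, let $\mathbb{T}:=[H\times \mathbb{G}]/\mathcal{G}(s_H)$, a compact Abelian group, and observe the identity $\mathfrak{P}_H(W+w)+t = \mathfrak{P}_H(W+w')+t'$ whenever $(w,t)-(w',t')\in \mathcal{G}(s_H)$, so that the closed model set $\mathfrak{P}_H(W+w)+t$ depends only on the class $[w,t]\in \mathbb{T}$. The transformation rule $S(\Lambda * \delta_s) = S(\Lambda) + s$ then makes clear that part (iii) will give a $\mathbb{G}$-map as soon as continuity is established, as the torus action is exactly $[w,t].s = [w,t+s]$.

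For part (i), I would write $\Lambda \in X_{\mathsf{\Lambda}}$ as a vague, hence by Proposition \ref{prop:equivalence.topologies} local, limit of translates $\mathsf{\Lambda}*\delta_{t_n}$ and examine the classes $[0,t_n]\in \mathbb{T}$. Compactness of $\mathbb{T}$ yields after extraction a limit $[w,t]$, meaning one may choose representatives $(s_H(\delta_n),t_n+\delta_n)\to(w,t)$ for some $\delta_n\in \Gamma$. For each $s\in S(\Lambda)$, local convergence provides $\gamma_n \in S(\mathsf{\Lambda})$ with $\gamma_n + t_n \to s$. One then considers the lattice points $(s_H(\gamma_n - \delta_n),\gamma_n-\delta_n)\in \mathcal{G}(s_H)$: their $\mathbb{G}$-component tends to $s-t$, while the $H$-component lies in the bounded set $W - s_H(\delta_n)$, hence is also bounded. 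Discreteness of $\mathcal{G}(s_H)$ forces the sequence to be eventually constant (up to a further extraction), say equal to some $\gamma_\infty \in \Gamma$, and passing to the limit gives $s-t = \gamma_\infty \in \Gamma$ together with $s_H(\gamma_\infty) \in W + w$ (with the appropriate sign), i.e. $s\in \mathfrak{P}_H(W+w) + t$, as required.

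For part (ii), first treat $\mathsf{\Lambda}$ itself: if $S(\mathsf{\Lambda}) \subseteq \mathfrak{P}_H(W+w)+t$ for some $(w,t)$, then every $\gamma\in S(\mathsf{\Lambda})$ satisfies $\gamma - t\in \Gamma$, forcing $t\in \Gamma$ (since $S(\mathsf{\Lambda})\subset \Gamma$); translating the inclusion by $s_H(t)$ and using that $s_H(S(\mathsf{\Lambda}))$ is \emph{dense} in $W$ one gets $W \subseteq W + (w - s_H(t))$, whence $W = W + (w - s_H(t))$ by equality of Haar measures and the topological regularity of $W$, and finally $w = s_H(t)$ by irredundancy. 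Thus $(w,t) \in \mathcal{G}(s_H)$ and $[w,t]=[0,0]$. For general $\Lambda \in \mathrm{RLI}(\mathsf{\Lambda})$, one has $\mathsf{\Lambda}\in X_\Lambda$, so $\mathsf{\Lambda}$ is a limit of translates $\Lambda * \delta_{u_n}$; the transformation rule shows that two parametrizations $[w,t]$ and $[w',t']$ of $\Lambda$ yield two parametrizations $[w, t+u_n]$ and $[w', t'+u_n]$ of each $\Lambda * \delta_{u_n}$; extracting a convergent subsequence and using the construction of (i), both limits parametrize $\mathsf{\Lambda}$, hence both equal $[0,0]$ by the previous case, and their difference $[w-w',t-t']$ vanishes. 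For part (iii), assume $\Lambda_n \to \Lambda$ in $\mathrm{RLI}(\mathsf{\Lambda})$ with parametrizations $[w_n,t_n]$; by compactness of $\mathbb{T}$ it suffices to show every convergent subsequence $[w_{n_k},t_{n_k}]\to[w^*,t^*]$ satisfies $[w^*,t^*]=[w,t]$. Choosing convergent lifts $(w'_{n_k},t'_{n_k})\to(w^*,t^*)$ in $H\times \mathbb{G}$ and repeating the compactness/discreteness argument of (i) yields $S(\Lambda) \subseteq \mathfrak{P}_H(W+w^*)+t^*$, so that by the uniqueness established in (ii) one concludes $[w^*,t^*]=[w,t]$.

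The main obstacle I anticipate is the uniqueness step: extending the uniqueness proof from $\mathsf{\Lambda}$ (where density of $s_H(S(\mathsf{\Lambda}))$ in $W$ plus irredundancy does the work) to an arbitrary $\Lambda$ in the RLI-class, where $s_H(S(\Lambda))$ need \emph{not} be dense in $W$. The resolution, sketched above, is precisely the reason the statement is limited to $\mathrm{RLI}(\mathsf{\Lambda})$ rather than all of $X_{\mathsf{\Lambda}}$: it is the defining property $X_\Lambda = X_{\mathsf{\Lambda}}$ that allows one to accumulate back onto $\mathsf{\Lambda}$ by translating and thereby reduce the uniqueness for $\Lambda$ to that for $\mathsf{\Lambda}$. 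All the other steps are essentially compactness + discreteness manipulations once the local topology description of Proposition \ref{prop:equivalence.topologies} is in hand.
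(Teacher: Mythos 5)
Your proposal is sound and reaches the same conclusions, but it takes a genuinely different route on the two substantive points. For part (i) the difference is cosmetic: the paper extracts an accumulation point of $s_H(t_n)$ in $H$ and phrases the conclusion as an inclusion of windows $W(\Lambda)\subseteq W+w$, whereas you work with lattice points of $\mathcal{G}(s_H)$ in $H\times \mathbb{G}$ and use discreteness to make them eventually constant; both are the same compactness argument. For part (ii) the routes really diverge. The paper exploits the symmetry of the RLI relation directly: since $\mathsf{\Lambda}\in \mathrm{RLI}(\Lambda)$ one also gets $W\subseteq W(\Lambda)+w'$, whence $W=W+w+w'$ and, by irredundancy, the \emph{equality} $W(\Lambda)=W+w$, from which uniqueness modulo $\mathcal{G}(s_H)$ is read off. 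You instead prove uniqueness for the base point $\mathsf{\Lambda}$ (where density of $s_H(S(\mathsf{\Lambda}))$ in $W$ plus irredundancy pins down the class) and then transport it to an arbitrary $\Lambda$ in the class by accumulating translates of $\Lambda$ back onto $\mathsf{\Lambda}$ and passing the parametrization to the limit. Both arguments use the RLI hypothesis and irredundancy in an essential way; the paper's is shorter, yours has the advantage of isolating a reusable "parametrizations pass to limits" lemma, which you then recycle to get (iii) by compactness of the torus plus uniqueness, where the paper instead runs an explicit finite-intersection/neighborhood estimate. Your version of (iii) does implicitly use metrizability (or a net formulation) for the subsequence extraction in $\left[ H\times \mathbb{G}\right]_{\mathcal{G}(s_H)}$, which is harmless here.

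One justification needs repair. In your base case you deduce $W=W+(w-s_H(t))$ from $W\subseteq W+(w-s_H(t))$ "by equality of Haar measures and the topological regularity of $W$". But $W$ is the closure of $s_H(S(\mathsf{\Lambda}))$, not a window in the paper's sense: it need not be topologically regular and may well have empty interior and zero Haar measure, so the measure-theoretic argument gives nothing. The implication is nevertheless true for any \emph{compact} $W$: from $W-v\subseteq W$ the orbit $\{x-nv\}_{n\geqslant 0}$ of any $x\in W$ is precompact, so $0$ is an accumulation point of $\{nv\}_{n\geqslant 1}$, and writing $y+v$ as a limit of the points $y-(m_k-1)v\in W$ shifted by $m_kv\to 0$ gives $W+v\subseteq W$. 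This is exactly the compactness fact the paper invokes when it asserts that $W\subseteq W+w+w'$ with $W$ compact forces $W=W+w+w'$. With that substitution (and keeping track of the sign convention you already flagged, which matches the ambiguity in the paper's own statement of the quotient by $\mathcal{G}(s_H)$), your argument goes through.
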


\begin{proof} $(i):$ Let $\Lambda \in X_{\mathsf{\Lambda}}$, which we can assume to not be the trivial measure on $\mathbb{G}$, and supposed to be supported on the structure group $\Gamma$. For such $\Lambda$ denote by $W(\Lambda)$ the closure of $s_H(S(\Lambda))$ in $H$. Hence with respect to this notation $W$ is nothing but $W(\mathsf{\Lambda })$. Then it is not hard to show the equivalence of conditions, for $w\in H$,
\begin{align}\label{intersection} w\in  \bigcap _{\gamma \in S(\Lambda ) } s_H(\gamma )-W \qquad \Longleftrightarrow \qquad  W(\Lambda)\subseteq W+ w .
\end{align}

Let us show that any $\Lambda \in X_{\mathsf{\Lambda}}$ supported on $\Gamma$ admits an element $w_\Lambda \in H$ where these equivalent conditions hold: As $\Lambda \in X_{\mathsf{\Lambda}}$ one can find a sequence $(t _n)_n$ of elements in $\mathbb{G}$ such that $\mathsf{\Lambda}* \delta _{t_n}$ converges to $\Lambda$, which according to Proposition \ref{prop:equivalence.topologies} means that for each compact $K$ and $\varepsilon >0$ there is a $N_{K, \varepsilon }$ such that, after possibly slightly moving each $t_n$, one has for each $n\geqslant N_{K, \varepsilon }$ that $\vert \Lambda (v) - \mathsf{\Lambda}* \delta _{t_n} (v)\vert  < \varepsilon $ for any $v\in K$. Therefore, whenever $p \in S(\Lambda )$ then $\mathsf{\Lambda}* \delta _{t_n}(p)$ is eventually non-zero, so that $p$ eventually belongs to the support $S(\mathsf{\Lambda})+t_n$ of $\mathsf{\Lambda}* \delta _{t_n}$. Since $\Lambda$ is by assumption non-trivial one can pick up some $p_0\in S(\Lambda)$. Then the sequence $t_n$ eventually lies in $p_0-S(\mathsf{\Lambda}) \subset S(\Lambda)- S(\mathsf{\Lambda})$, which lies in the group $\Gamma$ since both $\Lambda$ and $\mathsf{\Lambda}$ are supported on this latter group. Thus $s_H(t_n)$ eventually makes sense, and since $p_0 \in S(\mathsf{\Lambda})+t_n$ eventually then one eventually gets $s_H(p_0 ) \in W+s_H(t _n)$. Thus the sequence $s_H(t _n)$ lies in the compact set $s_H(p_0 )- W$ eventually, and therefore accumulates at some element $w_\Lambda\in H$. We can suppose after possibly extracting a subsequence that $s_H(t _n)$ converges to $w_\Lambda$ in $H$. The latter must satisfy $s_H(p_0) \in W+w_\Lambda$, and since this argument is independent upon the choice of $p_0 \in S(\Lambda )$ we deduce that $s_H(S(\Lambda )) \subset W+w_\Lambda$, which yields $W(\Lambda ) \subset W+w_\Lambda$ as desired. Now given any $\Lambda \in X_{\mathsf{\Lambda}}$ one has for any chosen element $t\in S(\Lambda)$ that $\Lambda * \delta _t$ is supported on the structure group $\Gamma$, and from we just said there exists a $w\in H$ such that (\ref{intersection}) holds, yielding a $ (w,t) \in H\times \mathbb{G}$ with $S(\Lambda) \subseteq \mathfrak{P}(W +w)+t$, as desired.\\

$(ii):$ We show that when $\Lambda\in \mathrm{RLI}(\mathsf{\Lambda})$ with support in $\Gamma$ then the $w\in H$ satisfying (\ref{intersection}) is unique. To that end we know from the above analysis that the set $W(\Lambda)$ is contained in some translate $W+w$ of the compact set $W$ and thus is compact in $H$. Since $\mathsf{\Lambda} \in  \mathrm{RLI}(\Lambda)$ we can interchange the roles of $\Lambda $ and $\mathsf{\Lambda}$ in the previous argument shows that there equally exists some $w '\in H$ with $W\subseteq W(\Lambda)+ w '$. It follows that $W \subseteq W(\Lambda) +w ' \subseteq W +w + w '$ with $W$ compact, which forces $W= W+ w+ w '$. From the irredundancy assumption of $W$ one gets $w= -w '$, and this in turns gives that $W(\Lambda)= W+ w $. Such an equality can hold for at most one element $w$, giving unicity. Now suppose $\Lambda\in \mathrm{RLI}(\mathsf{\Lambda})$ admits $(w,t)$ and $(w',t')$ in $H\times \mathbb{G}$ for which inclusion set in the first point of the statement hold: 
\begin{align*} S(\Lambda) \subseteq \mathfrak{P}(W +w)+t \qquad \text{and} \qquad S(\Lambda) \subseteq \mathfrak{P}(W +w')+t'
\end{align*}

Then from what have been just said one has
\begin{align*} W(\Lambda -t) =  W +w \qquad \text{and} \qquad W(\Lambda -t') =  W +w'
\end{align*}

Given any $p \in S(\Lambda)$ one has
\begin{align*}t-t' = (p -t')-(p -t) \in  \mathfrak{P}_H(W +w') -\mathfrak{P}_H(W +w) \subseteq \Gamma - \Gamma = \Gamma
\end{align*}

Therefore $s_H(t-t') $ makes sense and is an element of $H$ such that  
\begin{align*} W+ w +s_H(t-t')=  W(\Lambda -t) + s_H(t-t') = W(\Lambda -t') =  W +w' 
\end{align*}

which shows by irredundancy that $w' = w +s_H(t-t')$. Thus the element $(w,t) \in H\times \mathbb{G}$ for which inclusion if point $(i)$ of the statement holds is unique modulo $\mathcal{G}(s_H)$, as desired.\\

$(iii):$ The mapping $\xymatrixcolsep{2pc}\xymatrix{ \mathrm{RLI}(\mathsf{\Lambda}) \ni \Lambda \, \ar@{|->}[r] & \, [w,t] \in \left[ H\times \mathbb{G}\right] _{\mathcal{G}(s_H)}  }$ is from the conclusion of point $(ii)$ well-defined. Moreover when $\Lambda \in \mathrm{RLI}(\mathsf{\Lambda})$ is such that $S(\Lambda) \subseteq \mathfrak{P}(W +w)+t$ then $S(\Lambda *\delta _s) \subseteq \mathfrak{P}(W +w)+t+s$ for any $s\in \mathbb{G}$, so by the unicity part of point $(ii)$ one deduces that the map is a $\mathbb{G}$-map. We show continuity at any given $\Lambda \in \mathrm{RLI}(\mathsf{\Lambda}) $. Let $(w,t)\in H\times \mathbb{G}$ such that $S(\Lambda) \subseteq \mathfrak{P}(W +w)+t$: Given a neighborhood $U$ of $0$ in $H$ and a neighborhood $U'$ of $0$ in $\mathbb{G}$, since $w$ satisfies
\begin{align}  \lbrace w \rbrace =  \bigcap _{p \in S(\Lambda )-t } s_H(p )-W 
\end{align}

there exists a sufficiently large compact set $K$ such that
\begin{align*}\bigcap _{p \in (S(\Lambda ) -t)\cap K} s_H(p) -W  \, \subseteq \,  w +U.
\end{align*}

Now the set $S(\Lambda )\cap (K+t)$ being finite one has a minimal value $M>0$ for $\vert \Lambda(v)\vert$ on that set. Then we are done if we can show that if $ \Lambda '\in  \mathrm{RLI}(\mathsf{\Lambda})$ is such that
\begin{align*}  \inf _{s\in U'}\left[  \sup _{v\in K+t}\vert \Lambda (v) - \Lambda '(v+s)\vert \right]  < \frac{M}{2} 
\end{align*}

(that is, if $ \Lambda '\in  \mathrm{RLI}(\mathsf{\Lambda})$ belongs to a neighborhood of $\Lambda$ uniquely defined by $\Lambda$, $U$ and $U'$) then it admits a representative $(w',t') \in H\times \mathbb{G}$ of its class such that 
\begin{align*} (w',t') \in (w,t)+ U\times U'
\end{align*}

This is in turns true since for such $\Lambda '$ one has an $s\in U'$ such that, letting $t':= t+ s$, $(S(\Lambda ) -t)\cap K$ is included in $(S(\Lambda ')-t')\cap K$, and thus
\begin{align*} w' \in  \bigcap _{p \in (S(\Lambda ')-t')\cap K} s_H(p) -W  \, \subseteq  \bigcap _{p \in (S(\Lambda ) -t)\cap K} s_H(p) -W  \, \subseteq \,  w +U
\end{align*}

where $w'$ is the unique element of $H$ in the intersection $ \bigcap _{p \in (S(\Lambda ')-t') } s_H(p )-W$. We therefore end up with a pair $(w',t')$ such that, from the selection of $w'$ satisfies 
\begin{align*} S(\Lambda') \subseteq \mathfrak{P}(W +w')+t'
\end{align*}

and thus is a representative in $H\times \mathbb{G}$ of the class of $\Lambda '$ in $\left[ H\times \mathbb{G}\right] _{\mathcal{G}(s_H)}$, such that $(w',t') \in (w,t)+ U\times U'$. This shows continuity at any $\Lambda \in \mathrm{RLI}(\mathsf{\Lambda}) $, as desired.
\end{proof}



\subsection{\textsf{The decomposition of weighted Meyer sets}} ~ 

\vspace{0.2cm} 

We shall show here our main result of this section, see Theorem \ref{theo:decomposition.weighted.meyer.sets} just below. We will need to consider here the Borel $\mathbb{G}$-maps yield by Theorem \ref{theo:decomposition}. These maps are not uniquely defined but by the uniqueness statement of Theorem \ref{theo:uniqueness.decomposition} any two choices agree $m$-almost everywhere on $X$. We assume here that such choice is made.\\

\begin{theo}\label{theo:decomposition.weighted.meyer.sets} Let $(X, \mathbb{G}, m)$ be an ergodic system of weighted Meyer sets. Then the following property holds true $m$-almost everywhere: Whenever $\Lambda$ is supported on a closed model set, then so are its summands $\Lambda _{\mathrm{p}}$ and $\Lambda _{\mathrm{c}}$.\\
\end{theo}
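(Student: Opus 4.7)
The plan is to transport the constraint ``supported on a closed model set'' through the construction of $\Lambda_{\mathrm{p}}$ carried out in the proof of Theorem \ref{theo:decomposition}, using the torus parametrization of Theorem \ref{theo:parametrization} as the crucial bridge. By ergodicity (Proposition \ref{prop:ergodicity} and its strengthening for translation-bounded measures in Section \ref{Section:basics}), almost every $\Lambda \in X$ belongs to a single full-measure RLI-class; fix a representative $\mathsf{\Lambda}$ of this class. As a weighted Meyer set, $\mathsf{\Lambda}$ admits by Theorem \ref{theo:existence.cut.project.representation} an irredundant Cut \& Project representation $(H, \Gamma, s_H)$ with window $W$ such that $S(\mathsf{\Lambda}) \subseteq \mathfrak{P}_H(W)$, and Theorem \ref{theo:parametrization} then provides a continuous $\mathbb{G}$-map
$$\tau : \mathrm{RLI}(\mathsf{\Lambda}) \longrightarrow T := [H \times \mathbb{G}]/\mathcal{G}(s_H), \qquad \tau(\Lambda) = [w_\Lambda, t_\Lambda],$$
with the defining property that $S(\Lambda) \subseteq \mathfrak{P}_H(W + w_\Lambda) + t_\Lambda$, the right-hand side being a closed model set that depends only on the class $\tau(\Lambda)$.

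Next I would observe that $T$ with its rotation $\mathbb{G}$-action is a Kronecker system. Hence by maximality of the Kronecker factor $\mathrm{T}_{\mathcal{E}}$ (Proposition \ref{prop.construction.Borel.factor.map} and the discussion following it), the Borel factor map underlying $\tau$ factors through $\mathrm{T}_{\mathcal{E}}$: there is a Borel factor map $\rho : \mathrm{T}_{\mathcal{E}} \to T$ with $\tau = \rho \circ \pi$ almost everywhere, where $\pi : X \to \mathrm{T}_{\mathcal{E}}$ is the factor map appearing in the proof of Theorem \ref{theo:decomposition}. Consequently the fibers of $\pi$ lie inside the fibers of $\tau$, so for $m$-a.e.\ $\Lambda$ the disintegration $\mu^{\mathrm{p}}_\Lambda = \mu_{\pi(\Lambda)}$ defining $\Lambda_{\mathrm{p}} = \mathsf{i}(\mu^{\mathrm{p}}_\Lambda)$ is supported on the Borel set $\tau^{-1}(\tau(\Lambda))$. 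Every $\Lambda'$ in this fiber satisfies $S(\Lambda') \subseteq \mathfrak{P}_H(W + w_\Lambda) + t_\Lambda$, so for any Borel set $B$ disjoint from this closed model set,
$$\Lambda_{\mathrm{p}}(B) = \int_X \Lambda'(B) \, d\mu^{\mathrm{p}}_\Lambda(\Lambda') = 0.$$
Hence $\Lambda_{\mathrm{p}}$ is supported on $\mathfrak{P}_H(W + w_\Lambda) + t_\Lambda$; since this same closed model set already contains $S(\Lambda)$, the difference $\Lambda_{\mathrm{c}} = \Lambda - \Lambda_{\mathrm{p}}$ is supported there too.

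The main obstacle will be the factorization step $\tau = \rho \circ \pi$: a priori $\tau$ is only defined on $\mathrm{RLI}(\mathsf{\Lambda})$ rather than on all of $X$, so one must first promote it to a bona fide Borel factor map of $(X, \mathbb{G}, m)$ before invoking maximality of $\mathrm{T}_{\mathcal{E}}$. This uses the Borel extension machinery recalled in Paragraph \ref{paragraph:dynamical.systems} together with the full $m$-measure of $\mathrm{RLI}(\mathsf{\Lambda})$. A secondary care point is the Borel measurability of the set-valued map $\Lambda \mapsto \mathfrak{P}_H(W + w_\Lambda) + t_\Lambda$, which must be arranged so the fiberwise support containment on $\mu^{\mathrm{p}}_\Lambda$ genuinely translates into a support statement for the intensity $\Lambda_{\mathrm{p}}$; this reduces to Borel measurability of $\tau$ combined with continuity of $(w, t) \mapsto \mathfrak{P}_H(W + w) + t$ in an appropriate Fell-type topology.
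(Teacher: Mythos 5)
Your overall strategy coincides with the paper's: both proofs hinge on the torus parametrization of Theorem \ref{theo:parametrization}, on showing that the fibers of the maximal Kronecker factor map $\pi$ refine (almost everywhere) the fibers of the torus parametrization, and on the intensity formula $\Lambda_{\mathrm{p}}(B)=\int \Lambda'(B)\,d\mu_{\pi(\Lambda)}(\Lambda')$. Your route to the fiber refinement, via the abstract maximality of $\mathrm{T}_{\mathcal{E}}$ applied to the Kronecker system $\left[ H\times\mathbb{G}\right]_{\mathcal{G}(s_H)}$, is a legitimate and arguably cleaner substitute for the paper's explicit bookkeeping with the eigenfunctions $\Phi_\omega$, $\Phi_{\Lambda,\omega}$ and $\widetilde{\Phi}_\omega$.

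However, there is a genuine gap in the conclusion. Your argument places $S(\Lambda_{\mathrm{p}})$ and $S(\Lambda_{\mathrm{c}})$ inside $\mathfrak{P}_H(W+w_\Lambda)+t_\Lambda$, where $(H,\Gamma,s_H)$ and $W$ are built from one fixed representative $\mathsf{\Lambda}$ of the generic RLI-class. The theorem, as the paper proves it and as it is used in the remark following it (supports of $\gamma_{sap}$ and $\gamma_{0wap}$ inside $\Delta-\Delta$), asserts containment in the closed model set $\Delta$ hypothesized to contain $S(\Lambda)$ --- an arbitrary one, attached to a possibly unrelated CPS --- and there is no reason for the inclusion $\mathfrak{P}_H(W+w_\Lambda)+t_\Lambda\subseteq\Delta$ to hold. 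To repair this you must run the argument with the irredundant CPS obtained from $\Delta$ itself and with $\Lambda$ as base point, so that $\tau(\Lambda)=[0,0]$ and the relevant fiber sits inside $\mathfrak{P}_{H'}(W')\subseteq\Delta$; and, more delicately, you must do this for all (uncountably many) pairs $(\Lambda,\Delta)$ on a single full-measure set, whereas your full-measure set depends on the one $\tau$ you fixed. The paper achieves this uniformity by observing that the eigenfunctions $\widetilde{\Phi}_\omega$ induced by torus parametrizations do not depend on the base point nor on the chosen irredundant representation, which reduces everything to the countably many almost-everywhere conditions indexed by $\mathcal{E}_0\subseteq\mathcal{E}$ defining the set $X^{(1)}$. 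Your single fixed $\tau$ does not address this, so as written the proof only yields the weaker statement that the summands are supported on \emph{some} translate of a closed model set, not on the given $\Delta$.
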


\begin{proof} Let $(X, \mathbb{G}, m)$ be an ergodic system of weighted Meyer sets with eigenvalue group $\mathcal{E}$, and let $(X_{\mathrm{p}}, \mathbb{G},m_{\mathrm{p}})$ and $(X_{\mathrm{c}}, \mathbb{G},m_{\mathrm{c}})$ be as in Theorem \ref{theo:decomposition}. Recall from Theorem \ref{theo:uniqueness.decomposition} that there exists a unique (up to almost everywhere equality) Borel factor map $\xymatrixcolsep{2pc}\xymatrix{  X  \ar[r] &  X_{\mathrm{p}} }$, and from the proof of Theorem \ref{theo:decomposition} that it is almost everywhere given by the composition of the three Borel $\mathbb{G}$-maps
\begin{align*}\xymatrixcolsep{3.5pc}\xymatrix{  X \, \ar[r]^-{\pi} & \, \mathrm{T}_{ \mathcal{E} } \, \ar[r]^-{\mu} &  \,  \mathcal{M}^{b}(X) \, \ar[r]^-{\mathsf{i}} & \, \mathcal{M}^{\infty}(\mathbb{G})  }
\end{align*}

\vspace{0.2cm}
for some choice of Borel factor map $\pi$ as in Proposition \ref{prop.construction.Borel.factor.map}, and $\mu$ the disintegration of $m$ over $\mathrm{T}_{ \mathcal{E} }$. The following proposition is at the core of our proof:

\vspace{0.2cm} 

\begin{prop}\label{prop:decomposition.weighted.meyer.sets} Almost any $\mathsf{\Lambda} \in X$ admits a Borel set $F_{\mathsf{\Lambda}}$ of full $\mu _{\pi (\mathsf{\Lambda})}$-measure such that, whenever $\mathsf{\Lambda}$ is supported on a closed model set $\Delta$, then so are any $\Lambda \in F_{\mathsf{\Lambda}}$.
\end{prop}

\vspace{0.2cm} 

\begin{proof} As $(X, \mathbb{G}, m)$ is a ergodic system the set
\vspace{0.1cm}
\begin{align*}X^{(0)}:= \left\lbrace \Lambda \in X \, \vert \, \, X_\Lambda = Supp(m)\right\rbrace 
\end{align*}

\vspace{0.1cm}
is a Borel set of full measure in $X$, and choosing any $\Lambda \in X^{(0)}$ yield $X^{(0)} = \mathrm{RLI}(\Lambda)$. Let $\Lambda \in X^{(0)}$ be now given. By construction of the compact Abelian group $\mathrm{T}_{ \mathcal{E} }$, each eigenvalue $\omega \in \mathcal{E}$ has an associated character $\chi_\omega $ on it, and thus the Borel map $\Phi _\omega := \chi_\omega \circ \pi$ is an eigenfunction with eigenvalue $\omega$ on $(X, \mathbb{G}, m)$, forming so a collection of Borel maps $ \Phi _\omega$ on $X$, with $\omega \in \mathcal{E}$. Now consider an irredundant Cut $\&$ Project representation of $\Lambda$ in a CPS $(H,\Gamma, s_H)$: By part $(iii)$ of Theorem \ref{theo:parametrization} there is a continuous $\mathbb{G}$-map $p$ from $X^{(0)} = \mathrm{RLI}(\Lambda)$ to $\left[ H\times \mathbb{G}\right] _{_{\mathcal{G}(s_H)}}$. An eigenvalue $\omega$ of the Kronecker action of $\mathbb{G}$ on $\left[ H\times \mathbb{G}\right] _{_{\mathcal{G}(s_H)}}$ corresponds to a continuous character $\xi _\omega$ on this latter, which lifts in a continuous function
\vspace{0.1cm}
\begin{align*}\xymatrixcolsep{2.5pc}\xymatrix{ \Phi _{\Lambda ,\omega}:= \xi _\omega \circ p:  X^{(0)} \, \ar[r] & \, \mathrm{T}_1 }  \text{ such that }  \Phi _{\Lambda ,\omega}(\Lambda ) = 1, \, \, \Phi _{\Lambda ,\omega}(\Lambda '.t) = \omega (t) \Phi _{\Lambda ,\omega}(\Lambda ')
\end{align*}

\vspace{0.1cm}
In particular since $X^{(0)}$ is of full measure $\Phi _{\Lambda ,\omega}$ is an eigenvalue of $(X, \mathbb{G}, m)$ (perhaps not continuously extendable on $X$) with eigenfunction $\omega$. Therefore there is one and only one function $\widetilde{\Phi }_\omega$ on $X^{(0)}$, continuous and an eigenfunction for $\omega$, such that\\

$\bullet \, \, \widetilde{\Phi }_\omega = \Phi _\omega$ almost everywhere on $X^{(0)}$,
\vspace{0.2cm}

$\bullet \, \, \widetilde{\Phi }_\omega = c.\Phi _{\Lambda ,\omega}$ everywhere on $X^{(0)}$, for some constant $c\in \mathrm{T}_1$.\\

Such a function $\widetilde{\Phi }_\omega$ does not depend on the used irredundant Cut $\&$ Project representation of $\Lambda$ because $\Phi _{\Lambda ,\omega}$ is continuous on $X^{(0)}$ and the $\mathbb{G}$-orbit of $\Lambda$ is dense in it. Moreover if the same element $\omega \in \mathcal{E}$ has an eigenfunction of the form $\Phi _{\Lambda ' ,\omega}$ for another $ \Lambda '\in X^{(0)}$ then again by continuity there is a constant $c\in \mathrm{T}_1$ such that $\Phi _{\Lambda ' ,\omega} = c.\Phi _{\Lambda ,\omega}$ everywhere on $X^{(0)}$. This shows that $\widetilde{\Phi }_\omega$, whenever it exists, is independent on the choice of $\Lambda \in X^{(0)}$ and on the irredundant Cut $\&$ Project representation of $\Lambda$. Let us denote $\mathcal{E}_0$ the subset of elements in $ \mathcal{E}$ having an associated function $\widetilde{\Phi }_\omega$ arising in the above way. Since the eigenvalue group $\mathcal{E}$ is countable then $\mathcal{E}_0$ is countable, and therefore the following Borel set has full mesure in $X$:
\vspace{0.1cm}
\begin{align*}X^{(1)}:= \left\lbrace \Lambda \in X^{(0)} \, \vert \, \, \widetilde{\Phi }_\omega (\Lambda) = \Phi _\omega (\Lambda) \, \, \forall \, \omega \in \mathcal{E}_0 \right\rbrace 
\end{align*}

\vspace{0.1cm}
Now, as $\mu$ is a disintegration of $m$ over the Borel factor $ \mathrm{T}_{ \mathcal{E} }$ one has by an application of the Disintegration Theorem the Borel subset of full measure in $X$
\vspace{0.1cm}
\begin{align*}X^{(2)}:= \left\lbrace \Lambda \in X \, \vert \, \,\mu _{\pi(\Lambda)} \text{ is supported on } \pi^{-1}(\pi(\Lambda)) \right\rbrace 
\end{align*}

\vspace{0.1cm}
and as a result the Borel set $X^{(3)}:=  X^{(1)} \cap X^{(2)}$ is of full measure in $X$. Moreover, applying the Disintegration Theorem to the Borel almost everywhere $1$ function $\mathbb{I}_{X^{(3)}}$ one deduces that the set
\vspace{0.1cm}
\begin{align*}X^{\infty}:= \left\lbrace \Lambda \in X^{(3)} \, \vert \, \, \mu _{\pi(\Lambda)}(X^{(3)})=1\right\rbrace 
\end{align*}

\vspace{0.1cm}
is a Borel set of full measure in $X$. Form the very construction of this Borel set, for any $\Lambda \in X^{\infty}$ the Borel set
\vspace{0.1cm}
\begin{align*} F_\Lambda := \pi ^{-1}(\pi(\Lambda)) \cap X^{(3)}
\end{align*}

\vspace{0.2cm}
has $\mu _{\pi (\Lambda )}$-measure equal to 1 and always contains the element $\Lambda$. We now claim that for any $\Lambda \in X^{\infty}$ the sets $F_\Lambda$ make our statement holding. For, let $\mathsf{\Lambda} \in X^{\infty}$ be given, and suppose it is supported on some closed model set $\Delta$, that is, let $(H, \Gamma , s_H)$ be a CPS and $V$ a compact set in $H$ such that $\mathsf{\Lambda}$ is supported on $\Delta:= \mathfrak{P}_H(V)$. Then $s_H(S(\mathsf{\Lambda}))$ makes sense and its closure $W$ in $H$ is contained in $V$, and thus is compact. Modding out the redundancy subgroup $\mathcal{R}_W$ in $H$ leads to an irredundant Cut $\&$ Project representation of $\mathsf{\Lambda}$ in a new CPS $(H', \Gamma , s_{H'})$ with same structure group, with closure of $s_{H'}(S(\mathsf{\Lambda}))$ in $H'$ being a compact set $W'$, and such that
\vspace{0.1cm}
\begin{align}\label{inclusion.model.sets} \mathfrak{P}_{H'}(W')= \mathfrak{P}_{H}(W) \subseteq \mathfrak{P}_H(V)=: \Delta
\end{align}

\vspace{0.1cm}
It is then obviously sufficient to show that
\vspace{0.1cm}
\begin{align}\label{desired.inclusion} S(\Lambda) \subseteq \mathfrak{P}_{H'}(W') \qquad \text{for any } \Lambda \in F_{\mathsf{\Lambda}}
\end{align}

\vspace{0.2cm}
Let us show this:
From Theorem \ref{theo:parametrization} there is a continuous map $$\xymatrixcolsep{3.5pc}\xymatrix{X^{\infty} \subseteq X^{(0)} = \mathrm{RLI}(\mathsf{\Lambda}) \ni \Lambda  \, \ar@{|->}[r]^-{p} & \, [w,t] \in \left[ H'\times \mathbb{G}\right] _{\mathcal{G}(s_{H'})}  }$$ where $p(\Lambda)=[w,t]$ is the unique $\mathcal{G}(s_H)$-class such that any representative $(w,t)\in H\times \mathbb{G}$ satisfies
\vspace{0.1cm}
\begin{align*}S(\Lambda) \subseteq \mathfrak{P}_{H'}(W'+w)+t
\end{align*}

\vspace{0.2cm}
Denote $\mathcal{E}'$ the eigenvalue group of the Kronecker action of $\mathbb{G}$ on $\left[ H'\times \mathbb{G}\right] _{\mathcal{G}(s_{H'})}$. Then $\mathcal{E}' \subseteq \mathcal{E}_0$ naturally, and each $\omega \in \mathcal{E}'$ yields functions $\Phi _\omega$, $\widetilde{\Phi }_\omega$ and $\widetilde{\Phi }_{\mathsf{\Lambda}, \omega}$ as done before. Now for $\Lambda \in F_{\mathsf{\Lambda}}$ one has $\mathsf{\Lambda},\Lambda \in \pi ^{-1}(\pi(\mathsf{\Lambda}))$ so satisfy $\Phi _\omega (\Lambda)= \Phi _\omega (\mathsf{\Lambda})$, and moreover $\mathsf{\Lambda},\Lambda \in X^{\infty} \subseteq X^{(1)}$ so satisfy as well $\widetilde{\Phi }_\omega (\Lambda) = \widetilde{\Phi }_\omega (\mathsf{\Lambda})$. It therefore comes that $\widetilde{\Phi }_{\mathsf{\Lambda}, \omega} (\Lambda) = \widetilde{\Phi }_{\mathsf{\Lambda}, \omega} (\mathsf{\Lambda})$ for any $ \omega \in \mathcal{E}'$, that is, $\mathsf{\Lambda}$ and $\Lambda$ are identified under any function of the form $\chi _\omega \circ p$ where $\chi _\omega$ is any character on the compact Abelian group $\left[ H'\times \mathbb{G}\right] _{\mathcal{G}(s_{H'})}  $. One must then have $p(\Lambda) = p(\mathsf{\Lambda}) = [0,0]$ in $\left[ H'\times \mathbb{G}\right] _{_{\mathcal{G}(s_{H'})}}  $. This precisely ensures that $S(\Lambda) \subseteq \mathfrak{P}_{H'}(W')$ for any $\Lambda \in F_{\mathsf{\Lambda}}$, yielding (\ref{desired.inclusion}) and thus the proof of Proposition \ref{prop:decomposition.weighted.meyer.sets}.
\end{proof}

From this we can easily settle the proof of Theorem \ref{theo:decomposition.weighted.meyer.sets}: Indeed let $\Lambda \in X$ such that the conclusion of Proposition \ref{prop:decomposition.weighted.meyer.sets} holds true for some Borel set $F_\Lambda$. Without restriction one can suppose $\Lambda $ to belong in Borel subset of full measure 
\vspace{0.1cm}
\begin{align*} \left\lbrace \Lambda \in X \, \vert \, \,\Lambda _{\mathrm{p}} = \mathsf{i}(\mu _{\pi(\Lambda)}) \text{ and } \Lambda _{\mathrm{c}} = \Lambda - \Lambda _{\mathrm{p}} \right\rbrace 
\end{align*}

\vspace{0.1cm}
We shall show that the supports of $\mathsf{\Lambda}_{\mathrm{p}}$ and $\mathsf{\Lambda}_{\mathrm{c}}$ satisfy
\vspace{0.1cm}
\begin{align}\label{support} S(\mathsf{\Lambda}_{\mathrm{p}}), S(\mathsf{\Lambda}_{\mathrm{c}}) \, \subseteq \bigcup _{\Lambda \in F_\mathsf{\Lambda} } S(\Lambda)
\end{align}

\vspace{0.1cm}
Indeed for any Borel set $B$ of $ \mathbb{G}$ one has $\mathsf{\Lambda}_{\mathrm{p}}(B) = \mathsf{i}(\mu _{\pi(\mathsf{\Lambda})})(B)$, and since $F_\mathsf{\Lambda} $ is of full $\mu _{\pi (\mathsf{\Lambda})}$-measure one deduces by definition of $\mathsf{i}$
\vspace{0.1cm}
\begin{align*} \mathsf{\Lambda}_{\mathrm{p}}(B)  = \int _{X} \Lambda (B) \, d \mu _{\pi(\mathsf{\Lambda})}(\Lambda) = \int _{ F_\mathsf{\Lambda} } \Lambda (B) \, d \mu _{\pi(\mathsf{\Lambda})}(\Lambda)
\end{align*}

\vspace{0.1cm}
which is null when $B$ belongs to the complementary set of right term of (\ref{support}). This shows the desired inclusion for $S(\mathsf{\Lambda}_{\mathrm{p}})$. Now since $\mathsf{\Lambda}_{\mathrm{c}}= \mathsf{\Lambda} - \mathsf{\Lambda}_{\mathrm{p}}$, $S(\mathsf{\Lambda}_{\mathrm{c}})$ is therefore supported on $S(\mathsf{\Lambda}) \cup S(\mathsf{\Lambda}_{\mathrm{p}})$, clearly included in the union of (\ref{support}), given the desired inclusion for $\mathsf{\Lambda}_{\mathrm{c}}$. Therefore, whenever our $\mathsf{\Lambda} $ is supported on some closed model set $\Delta$ then combining the inclusions (\ref{support}) with the statement of Proposition \ref{prop:decomposition.weighted.meyer.sets} one deduces
\begin{align*} S(\mathsf{\Lambda}_{\mathrm{p}}), S(\mathsf{\Lambda}_{\mathrm{c}}) \subseteq  \left[ \bigcup _{\Lambda \in F _\mathsf{\Lambda} } S(\Lambda)\right] \subseteq \Delta
\end{align*}
as desired.
\end{proof}

\vspace{0.2cm} 

\begin{rem} From the above result we know that, given an ergodic system $(X, \mathbb{G}, m)$ of weighted Meyer sets with auto-correlation $\gamma$, then once a generic $\Lambda \in X$ is supported in a closed model set $\Delta $ then so does $\Lambda _{\mathrm{p}}$ and $\Lambda _{\mathrm{c}}$. Therefore using formula (\ref{formula.auto-correlation}) of Paragraph \ref{Par:diffraction} for the auto-correlation, as well as the conclusion of Remark \ref{remark:autocorrelation}, one shows that both $\gamma$, its strong almost periodic part (the auto-correlation of a generic choice of $\Lambda _{\mathrm{p}} \in X _{\mathrm{p}}$) and its null-weakly almost periodic part (the auto-correlation of a generic choice of $\Lambda _{\mathrm{c}} \in X _{\mathrm{c}}$) are supported on the difference set $\Delta -\Delta$. This statement was obtained (among other things) in \cite{Str2}.
\end{rem}

\begin{rem} A lattice $\Gamma$ of $\mathbb{G}$ is always a closed model set, for instance coming from the Cut $\&$ Project scheme having trivial internal group and $\Gamma$ itself as lattice in $\lbrace 0\rbrace  \times  \mathbb{G} $. As a result if $X$ consists of weighted translates of some lattice $\Gamma$ then so does $X_{\mathrm{p}}$ and $X _{\mathrm{c}}$.
\end{rem}

\begin{rem} Suppose that $(X, \mathbb{G}, m)$ is an ergodic system of Dirac combs of Meyer sets (thus with constant weight equal to 1). Let $\Xi _t$ stands for the subset of Dirac combs in $X$ having $t$ in its support, for any $t\in \mathbb{G}$. Then it is not difficult to show, for $m$-almost every $\Lambda \in X$, the formula
\begin{align*} \Lambda _{\mathrm{p}}= \sum \Lambda _{\mathrm{p}}(t) \delta _t \qquad \text{with} \qquad \Lambda _{\mathrm{p}}(t) = \mu _{\pi (\Lambda )}(\Xi _t)
\end{align*}

Thus, on a probabilistic viewpoint, $\Lambda _{\mathrm{p}}$ can be interpreted as the sum of Dirac combs of elements $t\in \mathbb{G}$ with each coefficient $\Lambda _{\mathrm{p}}(t)$ being the $m$-expectation of the point $t$ to appear, conditioned by the event of belonging in the fiber of $\Lambda$ over $\mathrm{T}_{ \mathcal{E} } $. In the general weighted case a similar formula, yet less trivial, can be stated which involves the weight function $\xymatrixcolsep{2pc}\xymatrix{ c:  X \, \ar[r] & \, \mathbb{C} }$, $c(\Lambda ):= \Lambda (0)$, the measure of the singleton $0$.
\end{rem}

\begin{rem} Suppose that $(X, \mathbb{G}, m)$ is an ergodic system of weighted Meyer sets with non-negative weights. Then by Remark \ref{remark:positive} any weighted Meyer set of $(X_{\mathrm{p}}, \mathbb{G},m_{\mathrm{p}})$ also has non-negative weights. Suppose moreover that $m$-almost any measure in $X$ have relatively dense support. Then it follows from Proposition \ref{prop:relatively.dense.support} that $m_{\mathrm{p}}$-almost any weighted Meyer set in $X_{\mathrm{p}}$ also have relatively dense support.
\end{rem}

\section{\textsf{Operator methods for the analysis of Borel factors}}\label{Section:admissible.operators}

In this section we will have a second look on the process associating to certain Borel factors of a measured dynamical system of translation-bounded measures $(X, \mathbb{G}, m)$ an operator on $L^2(X,m)$, as done in Paragraph \ref{paragraph:factor.to.operator}. Our main result here is the identification of the class of operators involved in this process, see Theorem \ref{theo:correspondence}, as well as a relation between such operators and the diffraction of the corresponding Borel factors.

\subsection{\textsf{Admissible operators}} ~ 

\vspace{0.2cm} 


%
Let $(X, \mathbb{G},m)$ be a measured dynamical system of translation-bounded measures, with diffraction to dynamic map
\begin{align*}\xymatrixcolsep{3.5pc}\xymatrix{\Theta  : L^2(\widehat{\mathbb{G}}, \widehat{\gamma} )\, \,  \ar@{<->}[r] & \, \, \mathcal{H}_{\Theta } \subseteq L^2(X,m)}, \qquad \Theta (\widehat{\phi}) = \mathcal{N}_\phi
\end{align*}

for any $\phi \in \mathcal{C}_c(\mathbb{G})$. Recall that $P_\Theta$ stands for the $\mathbb{G}$-commuting orthogonal projection onto the Hilbert subspace $\mathcal{H}_{\Theta }$.

\vspace{0.2cm} 

\begin{de}\label{def.admissibility} A bounded operator $Q$ on the Hilbert space $L^2(X, m)$ is admissible if it is $\mathbb{G}$-commuting, if $Q = Q P_\Theta$ on $L^2(X, m)$ and if for each compact set $K\subset \mathbb{G}$ there is a constant $R_K\geqslant 0$ such that
\vspace{0.1cm}
\begin{align*} \displaystyle \Vert Q( \mathcal{N}_\phi ) \Vert_{_\infty ,ess} \leqslant R_K \Vert \phi \Vert _{_\infty } \qquad \qquad \forall \, \phi \in \mathcal{C}_K(\mathbb{G})
\end{align*}
\end{de}

\vspace{0.2cm} 

It is obvious that the identity operator is admissible in the above sense, and that the collection of admissible operators of the system $(X, \mathbb{G}, m)$ forms a linear subspace of the space of bounded operators on $L^2(X, m)$. 

\vspace{0.2cm} 

\begin{rem} Any bounded operator on $L^2(X, m)$ admits a certain weak form of admissibility: If $Q$ is any bounded operator then due to the continuity of the map  $\phi \longrightarrow \mathcal{N}_\phi $ one always has, for any compact set $K\subset \mathbb{G}$, a certain constant $R'_K\geqslant 0$ such that $\displaystyle \Vert Q( \mathcal{N}_\phi ) \Vert_{_2} \leqslant R'_K \Vert \phi \Vert _{_\infty }$ for all $ \phi \in \mathcal{C}_K(\mathbb{G})$.\\
\end{rem}

\vspace{0.2cm} 

Suppose that we are given a Borel factor map $\pi : X \longrightarrow X_\pi$ over a measured dynamical system of translation-bounded measures $(X_{\pi}, \mathbb{G}, m_{\pi} )$ having diffraction $\widehat{\gamma}_{\pi}\ll \widehat{\gamma}$ with essentially bounded Radon-Nikodym differential $f_{\pi}:= \frac{d\widehat{\gamma}_{\pi}}{d\widehat{\gamma}} \in L^{\infty}(\widehat{\mathbb{G}}, \widehat{\gamma})$. Then Proposition \ref{prop:factor.to.operator} gives rise to a bounded operator $Q_\pi $ on $L^2(X,m)$, which we aim here to show that it is admissible:

\vspace{0.2cm} 

\begin{prop} The operator $Q_\pi \in \mathcal{B}(L^2(X,m))$ associated by Proposition \ref{prop:factor.to.operator} to any Borel factor map $\pi : X \longrightarrow X_\pi$ as above is admissible.
\end{prop}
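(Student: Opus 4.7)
The plan is to verify the three defining conditions of admissibility separately, with the essential work lying in the pointwise bound; the first two conditions are essentially built into the construction of $Q_{\pi}$ in Proposition~\ref{prop:factor.to.operator}.

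First, the identity $Q_{\pi}=Q_{\pi}P_{\Theta}$ is immediate from the defining formula $Q_{\pi}:=i_{\pi}\circ\widetilde{Q}_{\pi}\circ P_{\Theta}$. For the $\mathbb{G}$-commutation, the plan is to observe that each of the three factors commutes with the $\mathbb{G}$-action: $P_{\Theta}$ is $\mathbb{G}$-commuting since $\mathcal{H}_{\Theta}$ is $\mathbb{G}$-stable, $i_{\pi}$ is $\mathbb{G}$-commuting because $\pi$ is a Borel $\mathbb{G}$-map (so after adjusting on a null set one has $f\circ\pi(\cdot * \delta_{t})=(f(\cdot * \delta_{t}))\circ\pi$), and $\widetilde{Q}_{\pi}$ coincides with the identity at the level of functions on $\widehat{\mathbb{G}}$, hence intertwines the two diffraction-to-dynamic $\mathbb{G}$-representations which are both given by multiplication by characters. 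Composing three $\mathbb{G}$-commuting operators yields the desired property.

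The core of the proposition is the essential-sup bound. By point (i) of Proposition~\ref{prop:factor.to.operator}, for any $\phi\in\mathcal{C}_{c}(\mathbb{G})$ the $L^{2}$-class of $Q_{\pi}(\mathcal{N}_{\phi})$ equals the Borel function $\mathcal{N}^{\pi}_{\phi}\circ\pi$, so it suffices to bound $\|\mathcal{N}^{\pi}_{\phi}\|_{\infty}$ uniformly on $X_{\pi}$. Since $(X_{\pi},\mathbb{G},m_{\pi})$ is by hypothesis a dynamical system of translation-bounded measures, there exists a pair $(K_{\pi},M_{\pi})$ with $X_{\pi}\subseteq\mathcal{M}_{(K_{\pi},M_{\pi})}(\mathbb{G})$. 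Given a compact set $K\subset\mathbb{G}$, one can cover the compact set $-K$ by finitely many translates $K_{\pi}+t_{1},\ldots,K_{\pi}+t_{N_{K}}$; for any $\Lambda\in X_{\pi}$ one then obtains $|\Lambda|(-K)\leqslant N_{K}M_{\pi}$. Consequently, for $\phi\in\mathcal{C}_{K}(\mathbb{G})$,
\begin{align*}
|\mathcal{N}^{\pi}_{\phi}(\Lambda)|=\Bigl|\int_{\mathbb{G}}\phi(-s)\,d\Lambda(s)\Bigr|\leqslant\|\phi\|_{\infty}\,|\Lambda|(-K)\leqslant N_{K}M_{\pi}\|\phi\|_{\infty}.
\end{align*}
Setting $R_{K}:=N_{K}M_{\pi}$ and passing to the composition with $\pi$, one concludes $\|Q_{\pi}(\mathcal{N}_{\phi})\|_{\infty,ess}\leqslant\|\mathcal{N}^{\pi}_{\phi}\circ\pi\|_{\infty}\leqslant R_{K}\|\phi\|_{\infty}$, which is the required bound.

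The only delicate point is ensuring that the $L^{2}$-identity $Q_{\pi}(\mathcal{N}_{\phi})=\mathcal{N}^{\pi}_{\phi}\circ\pi$ can legitimately be promoted to an $L^{\infty}$ bound: this is harmless because the right-hand side is an everywhere-defined bounded Borel representative of the $L^{2}$-class, so the essential supremum with respect to $m$ is majorised by its uniform supremum. No further obstruction arises.
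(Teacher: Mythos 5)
Your proof is correct and follows essentially the same route as the paper's: the first two conditions are read off from the construction of $Q_{\pi}$, and the essential-sup bound comes from the identity $Q_{\pi}(\mathcal{N}_{\phi})=\mathcal{N}^{\pi}_{\phi}\circ\pi$ together with the uniform translation-bound on the measures in $X_{\pi}$ (your finite-covering argument just makes explicit the constant $R_K$ that the paper asserts depends only on $K$ and the translation-bound). The only cosmetic difference is that you verify $\mathbb{G}$-commutation factor by factor in $Q_{\pi}=i_{\pi}\circ\widetilde{Q}_{\pi}\circ P_{\Theta}$, whereas the paper checks it on the functions $\mathcal{N}_{\phi}$ and extends by density; both are fine.
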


\vspace{0.2cm} 

\begin{proof} One has $Q_\pi = Q_\pi P_\Theta$ by construction, whereas $\mathbb{G}$-commutativity, that is to say, $U_t Q_\pi = Q_\pi U_t$ for any $t\in \mathbb{G}$, is straightforward to check on functions of the form $\mathcal{N}_\phi$ for $\phi \in \mathcal{C}_c(\mathbb{G})$ and therefore holds on the closure $\mathcal{H}_\Theta$, hence on the whole space $L^2(X,m)$. For each $\phi \in \mathcal{C}_K(\mathbb{G})$ supported on a given compact set $K\subset \mathbb{G}$ one has a bound holding for almost every $\Lambda\in X$
\begin{align*} \left| Q_\pi ( \mathcal{N} _\phi )(\Lambda)\right| = \left| \mathcal{N}^\pi _\phi \circ \pi (\Lambda) \right| \leqslant \Vert \mathcal{N}^\pi _\phi \Vert _{\infty} \leqslant R_K \Vert \phi \Vert _{\infty}
\end{align*}

where the constant $R_K$ only depends on the translation-bound of measures in $X_\pi$ and on the compact set $K$, settling the proof.

\end{proof}

\subsection{\textsf{From admissible operators to Borel factors}}  ~ 

\vspace{0.2cm} 

Given a dynamical system $(X, \mathbb{G}, m)$ of translation-bounded measures on $\mathbb{G}$, we shall now arrive at the reciprocal statement of Proposition \ref{prop:factor.to.operator}, that is, given an admissible operator $Q$ we shall reconstruct the underlying Borel factor of translation-bounded measures $ (X_Q, \mathbb{G}, m_Q)$. Anticipating things a bit, for such constructed dynamical system of translation-bounded measures we denote as usual
\begin{align*} \xymatrixcolsep{3.5pc}\xymatrix{  \mathcal{C}_c(\mathbb{G}) \ni \phi \ar@{|->}[r] & \mathcal{N}^Q _\phi \in \mathcal{C}(X_Q)  }, \qquad \mathcal{N}^Q _\phi (\Lambda_Q) = \int _{\mathbb{G}}\phi _- \, d\Lambda _Q
\end{align*}

The result concerning this is as follows:

\vspace{0.2cm} 

\begin{prop}\label{prop:operator.to.factor} Let $Q \in \mathcal{B}(L^2(X, m))$ be admissible. Then there exists a dynamical system $(X_Q, \mathbb{G}, m_Q)$ of translation bounded measures on $\mathbb{G}$ and a Borel factor map 
\begin{align*} \xymatrixcolsep{3pc}\xymatrix{ \pi _Q: X \ni \Lambda \ar@{|->}[r] &  \Lambda _Q \in X_Q}
\end{align*}

such that $ \mathcal{N}^Q_{ \phi }\circ \pi _Q   = Q( \mathcal{N}_\phi )$ in the $L^2$ sense for each $\phi \in \mathcal{C}_c(\mathbb{G})$.
\end{prop}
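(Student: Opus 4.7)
The plan is to construct, for $m$-almost every $\Lambda \in X$, a translation-bounded measure $\Lambda_Q$ on $\mathbb{G}$ whose action against test functions is prescribed by $Q(\mathcal{N}_\phi)(\Lambda)$, then assemble these into the desired factor system. The core idea is that admissibility of $Q$ lifts the defining equality $\mathcal{N}^Q_\phi \circ \pi_Q = Q(\mathcal{N}_\phi)$ from an $L^2$-statement to a pointwise Riesz-Markov representation.

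First I would exploit separability. Fix a nested sequence $(K_N)_N$ of compact sets with $\cup_N \mathring{K}_N = \mathbb{G}$ as in Paragraph \ref{Par:basics.fourier}, and choose a countable $\mathbb{Q}$-linear subspace $D \subset \mathcal{C}_c(\mathbb{G})$ such that $D \cap \mathcal{C}_{K_N}(\mathbb{G})$ is $\Vert \cdot \Vert_\infty$-dense in $\mathcal{C}_{K_N}(\mathbb{G})$ for every $N$. For each $\phi \in D$, admissibility ensures $Q(\mathcal{N}_\phi) \in L^\infty(X,m)$ with $\Vert Q(\mathcal{N}_\phi)\Vert_{\infty,ess} \leqslant R_{K_N}\Vert \phi \Vert_\infty$ whenever $\phi \in \mathcal{C}_{K_N}(\mathbb{G})$. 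Select for each such $\phi$ a Borel representative $\widetilde{Q\mathcal{N}_\phi}: X \to \mathbb{C}$ pointwise bounded by this constant (truncating if necessary). Since $D$ is countable, the set of $\Lambda \in X$ where the map $\phi \longmapsto \widetilde{Q\mathcal{N}_\phi}(\Lambda)$ fails to be $\mathbb{Q}$-linear on $D$ is a countable union of null sets, hence null; call its complement $X'$. A similar countable-union argument, invoking the $\mathbb{G}$-commutativity $U_tQ = QU_t$ together with the identity $U_t\mathcal{N}_\phi = \mathcal{N}_{\phi * \delta_t}$, allows me to assume that for a countable dense sequence $(t_k)_k \subset \mathbb{G}$ and all $\phi \in D$ the equality $\widetilde{Q\mathcal{N}_\phi}(\Lambda * \delta_{-t_k}) = \widetilde{Q\mathcal{N}_{\phi * \delta_{t_k}}}(\Lambda)$ holds on $X'$.

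Next I would construct $\Lambda_Q$. Fix $\Lambda \in X'$. The functional $\phi \longmapsto \widetilde{Q\mathcal{N}_\phi}(\Lambda)$ on $D$ is $\mathbb{Q}$-linear and uniformly bounded on each $D \cap \mathcal{C}_{K_N}(\mathbb{G})$, so it extends uniquely to a continuous $\mathbb{C}$-linear functional on each Banach space $(\mathcal{C}_{K_N}(\mathbb{G}), \Vert \cdot \Vert_\infty)$, hence to a continuous linear functional on $(\mathcal{C}_c(\mathbb{G}),\mathcal{T}_{\mathrm{lim}})$. By the Riesz--Markov theorem this functional is represented by a unique complex Borel measure $\Lambda_Q \in \mathcal{M}(\mathbb{G})$ satisfying $\int \phi_- \, d\Lambda_Q = \widetilde{Q\mathcal{N}_\phi}(\Lambda)$ for all $\phi \in D$, and by continuity for all $\phi \in \mathcal{C}_c(\mathbb{G})$. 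The bound $|\int \phi_- \, d\Lambda_Q| \leqslant R_{K_N}\Vert \phi \Vert_\infty$ for $\phi \in \mathcal{C}_{K_N}$ yields $|\Lambda_Q|(K_N) \leqslant R_{K_N}$, and the $\mathbb{G}$-commutativity property of the chosen representatives transfers to $\Lambda_Q * \delta_{t_k} = (\Lambda * \delta_{-t_k})_Q$, whence the translation-bound $|\Lambda_Q|(K_N + t) \leqslant R_{K_N}$ holds for all $t \in \mathbb{G}$ by a density argument in $t$. Thus $\Lambda_Q \in \mathcal{M}_{(K_1,R_{K_1})}(\mathbb{G})$ uniformly in $\Lambda \in X'$.

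Finally I would assemble the factor system. Define $\pi_Q : X \to \mathcal{M}_{(K_1,R_{K_1})}(\mathbb{G})$ by $\pi_Q(\Lambda) := \Lambda_Q$ on $X'$ and arbitrarily (say as $0$) on $X \setminus X'$. Borel measurability of $\pi_Q$ reduces, via vague topology and the countability of $D$, to Borel measurability of each map $\Lambda \longmapsto \int \phi_- \, d\Lambda_Q = \widetilde{Q\mathcal{N}_\phi}(\Lambda)$, which is true by construction. Setting $m_Q := (\pi_Q)_*m$ and $X_Q := \mathrm{Supp}(m_Q)$ gives a dynamical system of translation-bounded measures, and the identity $\mathcal{N}^Q_\phi \circ \pi_Q(\Lambda) = \int \phi_- \, d\Lambda_Q = \widetilde{Q\mathcal{N}_\phi}(\Lambda) = Q(\mathcal{N}_\phi)(\Lambda)$ on $X'$ realizes the desired equality in the $L^2$ sense. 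The $\mathbb{G}$-factor property follows because, for each fixed $t$, the set where $\pi_Q(\Lambda * \delta_t) = \pi_Q(\Lambda) * \delta_t$ fails is governed, by testing against $\phi \in D$, by the $\mathbb{G}$-commutativity relation already arranged on $X'$. I expect the main obstacle to be the bookkeeping of null sets at the first stage: ensuring simultaneously $\mathbb{Q}$-linearity, the admissibility bound, and the pointwise $\mathbb{G}$-commutativity on a single Borel set of full measure, so that the Riesz--Markov step produces a genuine translation-bounded measure depending Borel-measurably on $\Lambda$, rather than merely an almost-everywhere defined object.
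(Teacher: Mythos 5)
Your proposal follows essentially the same route as the paper's proof: use admissibility to get uniform $L^\infty$ bounds on chosen Borel representatives of $Q(\mathcal{N}_\phi)$ for $\phi$ in a countable dense $\mathbb{Q}$-linear subspace, discard a null set so that $\phi \mapsto \widetilde{Q\mathcal{N}_\phi}(\Lambda)$ is pointwise linear and bounded, apply Riesz--Markov to produce $\Lambda_Q$, and push $m$ forward. The Riesz--Markov step, the Borel measurability, and the $L^2$ identity $\mathcal{N}^Q_\phi \circ \pi_Q = Q(\mathcal{N}_\phi)$ are all handled as in the paper.

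There is however one step where your justification does not work as stated: the claim that the Borel $\mathbb{G}$-map property for \emph{each fixed} $t \in \mathbb{G}$ is ``governed by the $\mathbb{G}$-commutativity relation already arranged on $X'$.'' What you arranged on $X'$ is the equivariance only along the countable dense sequence $(t_k)_k$, and since $\pi_Q$ is merely Borel (not continuous), you cannot pass from a dense set of $t$'s to all $t$ by continuity in $t$. The paper closes this gap with a separate argument: for an arbitrary $t_0$ it re-runs the whole construction with $t_0$ adjoined to the countable set and with a possibly different choice of representatives, observes that the resulting map agrees with $\pi_Q$ almost everywhere (because both are determined a.e. by the $L^2$-classes $Q(\mathcal{N}_\phi)$, $\phi$ in the countable family), and transports the equivariance at $t_0$ from the new map to $\pi_Q$. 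An alternative fix is to first check that $\int \psi_- \, d\Lambda_Q$ represents the class $Q(\mathcal{N}_\psi)$ for \emph{every} $\psi \in \mathcal{C}_c(\mathbb{G})$ (by uniform approximation from $D$, using the pointwise bound), and then compare $\pi_Q(\Lambda * \delta_{t_0})$ and $\pi_Q(\Lambda)*\delta_{t_0}$ against all $\phi \in D$ using $QU_{t_0}=U_{t_0}Q$. A second, more minor imprecision: the translation bound $\vert \Lambda_Q\vert(K_N + t) \leqslant R_{K_N}$ for all $t$ does not follow from the bound on a dense set of $t$'s for the \emph{same} compact $K_N$; one must either shrink to an open set $U'$ with $\overline{U'}$ contained in the interior of $K_N$ (as the paper does with its $U' \subset U$ trick) or accept the constant $R_{K_{N+1}}$ after absorbing $K_N + t$ into $K_{N+1} + t_k$. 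Neither issue invalidates the strategy, but both require an explicit argument.
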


\vspace{0.2cm} 
 
\begin{proof} We proceed by decomposing the proof into five steps $\textbf{(i)-(v)}$.\\

 $\textbf{(i)}$ We fix here our notations: We denote $U$ to be an open relatively compact subset of $\mathbb{G}$, and $M> 0$ a constant such that $X$ consists of $(U,M)$-translation-bounded measures. We set $\mathbb{G}^0$ to be a dense countable subset of $\mathbb{G}$, and let $(K_N)_N$ be a nested sequence of compact subsets which are the closure of their interior in $\mathbb{G}$, with first term $K_0$ containing$U$ in its interior, and whose union is $\mathbb{G}$. Next, as the group $\mathbb{G}$ is locally compact second countable, it is possible to set a countable collection $(\phi_n^N)_{n\in \mathbb{N}}\subset \mathcal{C}_{K_N}(\mathbb{G})$ which is dense in $\mathcal{C}_{K_N}(\mathbb{G})$ for the uniform norm. In particular the countable collection $(\phi_n^N)_{n,N\in \mathbb{N}}$ is dense in $\mathcal{C}_c(\mathbb{G})$ for the inductive limit topology. Third we denote by $V$ the $\mathbb{G}^0$-stable $\mathbb{Q}+i\mathbb{Q}$-vector sub-space of $\mathcal{C}_c(\mathbb{G})$ spanned by $(\phi_n^N*\delta _t)_{n,N\in \mathbb{N}, t\in \mathbb{G}^0}$. It may be written as the countable union of countable sets
\begin{align*}V = \bigcup _{l=1}^\infty\left\lbrace \sum _{j_1,j_2, j_3=1}^l \lambda _{j_1,j_2,j_3}\, \phi ^{j_2}_{j_1}*\delta _{t_{j_3}} \, \vert \; \lambda _{j_1,j_2,j_3} \in \mathbb{Q} + i\mathbb{Q}, \, t_{j_3} \in \mathbb{G}^0 \right\rbrace 
\end{align*}

and is therefore itself countable. For each $N$ the $\mathbb{Q}+i\mathbb{Q}$-vector sub-space $V_N := V \cap \mathcal{C}_{K_N}(\mathbb{G})$ is in particular countable and dense in $\mathcal{C}_{K_N}(\mathbb{G})$ for the uniform norm. To finish these preparations we make the following: Since $Q$ is assumed admissible one has for each $N$ a constant $R_N \geqslant 0$ such that
\vspace{0.2cm}
\begin{align*} \Vert Q\left[ \mathcal{N}_\phi \right] \Vert _{_\infty , ess} \leqslant R_N\Vert \phi \Vert _{\infty } \qquad \qquad \forall \phi \in V_N
\end{align*}

\vspace{0.2cm}
Therefore we let, for each integer $N$ and each $\phi \in V_N$, $Q( \mathcal{N}_\phi )$ to be a chosen representative Borel function of the $L^2$-class $ Q\left[ \mathcal{N}_\phi \right]$ which is uniformly bounded on $X$ by the value $R_N\Vert \phi \Vert _{\infty }$. In particular $Q( \mathcal{N}_\phi ) (\Lambda)$ makes sense for each $\phi \in V$ and any $\Lambda\in X$, and one has for each integer $N$
\begin{align}\label{continuity} \vert Q( \mathcal{N}_\phi ) (\Lambda)\vert \leqslant R_N\Vert \phi \Vert _{\infty } \qquad \qquad \forall \phi \in V_N , \Lambda\in X
\end{align}

\vspace{0.2cm}

$\textbf{(ii)}$ We here construct our desired collection of measures $\Lambda_Q$ on $\mathbb{G}$, where $\Lambda\in X$. Given any two functions $\phi ,\phi '\in V$ and $\lambda \in \mathbb{Q}+i\mathbb{Q}$, the Borel set of points $x\in X$ where
\vspace{0.2cm}
\begin{align}\label{linearity} Q( \mathcal{N}_{\lambda \phi + \phi '} )(\Lambda) = \lambda Q( \mathcal{N}_\phi )(\Lambda) + Q( \mathcal{N}_{\phi '} )(\Lambda) 
\end{align}

\vspace{0.2cm}
is of full measure in $X$. As a byproduct the countable intersection
\vspace{0.2cm}
\begin{align*}X^{(0)}:= \bigcap _{\phi ,\phi '\in V, \, \lambda \in \mathbb{Q}+i\mathbb{Q} }\left\lbrace \Lambda \in X \, \vert \,  \text{ equation (\ref{linearity}) holds at $\Lambda$} \right\rbrace 
\end{align*}

\vspace{0.2cm}
is a Borel subset of full measure in $X$. Since $m$ is $\mathbb{G}$-invariant on $X$ the countable intersection $X^{(1)}:= \cap_{t\in \mathbb{G}^0} X^{(0)}*\delta_{_{-t}}\subset X^{(0)}$ is Borel of full measure in $X$. Moreover since $Q$ comutes with the unitary operator $U_t$ on $L^2(X,m)$ the set of $\Lambda\in X$ where
\vspace{0.2cm}
\begin{align}\label{equivariance}Q( \mathcal{N}_{\phi * \delta_{_{ t}}} )(\Lambda)= Q U_t ( \mathcal{N}_\phi)(\Lambda) \quad \text{    is equal to    } \quad U_t Q(\mathcal{N}_\phi )(\Lambda) =  Q( \mathcal{N}_\phi )(\Lambda * \delta _{_{t}})
\end{align}

\vspace{0.2cm}
holds whatever $\phi \in V$ and $t\in \mathbb{G}^0$ is a Borel set of full measure in $X$ and thus intersects $ X^{(1)}$ into a $\mathbb{G}^0$-stable Borel subset of full measure $X^{(2)}$ in $X$. Now for any $\Lambda\in X^{(2)}$ the mapping $ \xymatrixcolsep{2pc}\xymatrix{Q_\Lambda: \phi \ar@{|->}[r] & Q( \mathcal{N}_\phi )(\Lambda) }$ is well defined on the $\mathbb{Q}+i\mathbb{Q}$-vector space $V$, and is $\mathbb{Q}+i\mathbb{Q}$-linear. Thanks to (\ref{continuity}) $Q_\Lambda$ is uniformly continuous on each $\mathbb{Q}+i\mathbb{Q}$-vector space $(V_N, \Vert . \Vert _{_{\infty}})$ so extends in a continuous $\mathbb{Q}+i\mathbb{Q}$-linear form on each $(\mathcal{C}_{K_N}(\mathbb{G}), \Vert . \Vert _{\infty})$. It thus passes to the inductive limit in a continuous $\mathbb{Q}+i\mathbb{Q}$-linear form on $\mathcal{C}_c(\mathbb{G})$, and it directly comes that it is in fact $\mathbb{C}$-linear on $\mathcal{C}_c(\mathbb{G})$. Thus, by the Riesz Representation Theorem there exists for each $\Lambda\in X^{(2)}$ a complex Borel measure $\Lambda _Q\in \mathcal{M}(\mathbb{G})$ such that,
\begin{align}\label{formula} Q( \mathcal{N}_\phi )(\Lambda ) = \int _{\mathbb{G}} \phi _- \, d \Lambda _Q \qquad  \qquad \forall \phi \in V, \, \Lambda\in X^{(2)}.
\end{align}

Setting $ \xymatrixcolsep{2pc}\xymatrix{ \pi _Q: X  \ar[r] &  \mathcal{M}(\mathbb{G})}$ to be $\pi _Q(\Lambda ):= \Lambda _Q$ as just constructed when $\Lambda $ belongs to the $\mathbb{G}^0$-stable Borel subset of full measure $X^{(2)}$, and to be for instance the trivial measure when $\Lambda \in X\backslash X^{(2)}$, yields a mapping.\\



\textbf{(iii)} We show that $\pi _Q$ is valued in the compact set $\mathcal{M}_{(U',R_0)}(\mathbb{G})$ of $(U',R_0)$-translation-bounded measures, where $U'$ is any open subset of $U$ with $\overline{U'} \subset U$ in $\mathbb{G}$, and $R_0$ is the constant involved in (\ref{continuity}) at integer $N=0$. Such set $U'$ always exists: Indeed if one picks a point $s\in U$ then one results with a compact set $\lbrace s\rbrace $ disjoint from the closed set $\mathbb{G}\backslash U$, so $\mathbb{G}$ being a topological group it is a regular space and thus has an open subset $U'$ containing $s$ and disjoint from $\mathbb{G}\backslash U$, as desired. Let us now assume $\Lambda\in X^{(2)}$, and let $t\in \mathbb{G}^0$ be given: As any complex measure the measure $\Lambda _Q$ satisfies (see \cite{BaaLen}, Proposition 1 therein for validity of the first equality)
\begin{align*} \vert \Lambda _Q\vert(U +t) &  =  \sup \left\lbrace \left| \int _{\mathbb{G}} \phi   \, d\Lambda _Q \right| \, : \, \phi \in \mathcal{C}_{U+t}(\mathbb{G}), \, \Vert \phi \Vert _{\infty} \leqslant 1 \right\rbrace \\
&  =  \sup \left\lbrace \left| \int _{\mathbb{G}} \phi (.-t)   \, d\Lambda _Q \right| \, : \, \phi \in \mathcal{C}_{U}(\mathbb{G}), \, \Vert \phi \Vert _{\infty} \leqslant 1 \right\rbrace  \\
&  =  \sup \left\lbrace \left| \int _{\mathbb{G}} \phi *\delta _t   \, d\Lambda _Q \right| \, : \, \phi \in \mathcal{C}_{U}(\mathbb{G}), \, \Vert \phi \Vert _{\infty} \leqslant 1 \right\rbrace \\
& \leqslant  \sup \left\lbrace \left| \int _{\mathbb{G}} \phi *\delta _t   \, d\Lambda _Q \right| \, : \, \phi \in \mathcal{C}_{K_0}(\mathbb{G}), \, \Vert \phi \Vert _{\infty} \leqslant 1 \right\rbrace 
\end{align*}

The later being true since we supposed $K_0$ to contain $U$ from the beginning. This is in turn equal, with $V_0 :=  V \cap \mathcal{C}_{K_0}(\mathbb{G})$ as in $\textbf{(i)}$, dense in $\mathcal{C}_{K_0}(\mathbb{G})$, to 
\begin{align*}   \sup \left\lbrace \left| \int _{\mathbb{G}} \phi *\delta _t    \, d\Lambda _Q \right| \, : \, \phi \in V_0, \, \Vert \phi \Vert _{\infty} \leqslant 1 \right\rbrace 
\end{align*}

Now as $\phi \in V_0\subset V$ and $t\in \mathbb{G}^0$ then $\phi *\delta _t$ is again in $V$, and by (\ref{formula}) this quantity is
\vspace{0.2cm}
\begin{align*}  & \sup \left\lbrace \left| Q( \mathcal{N}_{\phi * t} )(\Lambda)\right| \, : \, \phi \in V_0, \, \Vert \phi \Vert _{\infty} \leqslant 1 \right\rbrace 
\end{align*}

\vspace{0.2cm}
which by (\ref{continuity}) is bounded by $R_0$. Therefore $ \vert \Lambda _Q\vert(U +t) \leqslant R_0$ holds for $t\in \mathbb{G}^0$. Now if an $s\in \mathbb{G}$ is given then from the choice of $U'$ one can in fact find a $t\in \mathbb{G}^0$ such that $U' +s \subset U+t$: Indeed one easily prove (by contradiction) that there is an open neighborhood $B$ of $0$ in $\mathbb{G}$ such that $U' + B\subset U$, so by density of $ \mathbb{G}^0$ one can pick up a $t\in \mathbb{G}^0$ with $s-t\in B$, yielding
\begin{align*} U'+s = U'+(s-t) +t \subset U'+B+t \subset U +t,
\end{align*}

as desired. This yields that $  \vert \Lambda _Q\vert(U' +s) \leqslant  \vert \Lambda _Q\vert(U +t) \leqslant R_0$ for any $s\in \mathbb{G}$ whenever $\Lambda$ belongs to $X^{(2)}$, that is, any such $\Lambda _Q$ is $(U',R_0)$-translation-bounded. By construction $\Lambda _Q$ is trivial when $\Lambda\in X\backslash X^{(2)}$ so is also $(U',R_0)$-translation-bounded in that case. One has therefore as desired
\begin{align*} \xymatrixcolsep{3.5pc}\xymatrix{ \pi _Q : X \ni \Lambda  \ar@{|->}[r] & \Lambda _Q \in \mathcal{M}_{(U',R_0)}(\mathbb{G}) }
\end{align*}

\vspace{0.2cm}

\textbf{(iv)} We show here that this map is a Borel $\mathbb{G}$-map. To show that it is Borel, since $\mathcal{M}(\mathbb{G})$ carries the vague topology it suffices to check that the map $  \Lambda \longmapsto \int _{\mathbb{G}} \phi  \, d (\Lambda _Q)  $ is Borel for each $\phi$ taken in the dense subspace $V$ of $\mathcal{C}_c(\mathbb{G})$. But this map is precisely the Borel map $Q( \mathcal{N}_\phi )$ on the Borel subset $X^{(2)}$ and the constant nul map on $ X\backslash X^{(2)}$, so the result follows. Now we infer that
\vspace{0.2cm}
\begin{align}\label{equivariance2} \Lambda _Q * \delta _t = (\Lambda * \delta _t)_Q \qquad \qquad \forall \Lambda \in X^{(2)} , \, t\in \mathbb{G}^0
\end{align}

\vspace{0.2cm}
Here $(\Lambda * \delta _t)_Q $ makes sense since $ X^{(2)}$ is $\mathbb{G}^0$-stable. This is due to the very construction of $ X^{(2)}$ since for such measures one has (\ref{equivariance}) holding, yielding at each $\phi \in V$
\begin{align*} \int _{\mathbb{G}} \phi _- \, d(\Lambda _Q * \delta _t)  =   \int _{\mathbb{G}} \phi _-* \delta _t  \, d(\Lambda _Q) = Q( \mathcal{N}_{\phi * \delta _t} ) (\Lambda ) = Q( \mathcal{N}_\phi ) (\Lambda * \delta _t) = \int _{\mathbb{G}} \phi _- \, d((\Lambda * \delta _t)_Q)
\end{align*}

so that by density of $V$ in $\mathcal{C}_c(\mathbb{G})$ the equality $\Lambda _Q * \delta _t = (\Lambda * \delta _t)_Q$ occurs on the whole space $\mathcal{C}_c(\mathbb{G})$, whenever $x\in X^{(2)} $ and $t\in \mathbb{G}^0$. Now to see that $\pi _Q$ is a $\mathbb{G}$-map observe that if, given $t_0\in \mathbb{G}$, one constructs another mapping $\pi '_Q$ as we just did, but with some $\mathbb{G}^{0'}$ containing $t_0$ playing the role of $\mathbb{G}^0$, some $\mathbb{G}^{0'}$-stable countable $\mathbb{Q}+i\mathbb{Q}$-vector subspace $V'\subset \mathcal{C}_c(\mathbb{G})$ containing our space $V$ together with Borel representatives $Q(\mathcal{N}_\phi)'$ for each $\phi \in V'$ (which possibly differ from $Q(\mathcal{N}_\phi)$ when $\phi \in V \subseteq V'$) then we also end up with a Borel map $ \pi '_Q : X \ni \Lambda \longmapsto\Lambda '_Q \in  \mathcal{M}_{(U',R_0)}(\mathbb{G}) $ together with a Borel subset of full measure $X^{(2) '}$ in $X$ such that, combining (\ref{formula}) and (\ref{equivariance2}) in this case,
\vspace{0.2cm}
\begin{align*} Q( \mathcal{N}_\phi )'(\Lambda ) = \int _{\mathbb{G}} \phi _- \, d \Lambda '_Q   \, \, \text{ and }\, \, \Lambda '_Q * \delta _t = (\Lambda * \delta _t)'_Q \, \, \forall  \phi \in V', \, \Lambda\in X^{(2)'} , \, t\in \mathbb{G}^{0'}
\end{align*}

\vspace{0.2cm}
Now the countable collection $V$ being contained in $V'$, and since the Borel maps $Q( \mathcal{N}_\phi )$ and $Q( \mathcal{N}_\phi )'$ are representatives of a same $L^2$-class so that they coincide almost everywhere on $X$, one have for each $\Lambda$ in the Borel set of full measure
\begin{align*} X^{(2) } \cap X^{(2) '} \cap \bigcap_{\phi \in V}\left\lbrace Q( \mathcal{N}_\phi ) = Q( \mathcal{N}_\phi )'\right\rbrace 
\end{align*}

equalities of $\Lambda _Q$ with $\Lambda _Q'$ on $V$, and thus on the whole space $\mathcal{C}_c(\mathbb{G})$. This shows that $\pi _Q = \pi _Q'$ almost everywhere on $X$, and since from its construction one has $\pi _Q'(\Lambda) *\delta_{t_0} = \pi _Q'(\Lambda * \delta_{t_0})$ for almost every $\Lambda$ one deduces that $\pi _Q(\Lambda) *\delta_{t_0} = \pi _Q(\Lambda * \delta_{t_0})$ for almost every $\Lambda \in X$, for any $t_0 \in \mathbb{G}$, as desired.\\

$\textbf{(v)}$ We now set properly the dynamical system $(X_Q, \mathbb{G},m_Q)$ and the stated Borel $\mathbb{G}$-map $\pi _Q$, and prove validity of formula of the statement. Let $m_Q$ be the push-forward probability measure of $m$ through $\pi_Q$, which is supported on the compact $\mathbb{G}$-stable set $\mathcal{M}_{(U',R_0)}(\mathbb{G}) $ and ergodic. Its support $X_Q$ is then a compact $\mathbb{G}$-stable set of $(U',R_0)$-translation-bounded measures, yielding our desired system $(X_Q, \mathbb{G},m_Q)$. It now suffices to modify $\pi_Q$, if needed, on a set of null measure in order to have a Borel $\mathbb{G}$-map properly valued in $X_Q$, while having a Borel subset of full measure $X^\infty$ where $\pi _Q$ as modified still satisfies form (\ref{formula})
\begin{align*} \pi _Q(\Lambda ) = \Lambda _Q \quad \text{with} \quad Q( \mathcal{N}_\phi )(\Lambda ) = \int _{\mathbb{G}} \phi _- \, d \Lambda _Q \quad \forall \, \phi \in V, \, \Lambda\in X^{\infty}
\end{align*}

This in particular ensure the equality $\mathcal{N}^Q_{ \phi }\circ \pi _Q   = Q( \mathcal{N}_\phi )$ in the $L^2$ sense for each $\phi \in V$. Since the linear maps
\begin{align*}\xymatrixcolsep{2pc}\xymatrix{\mathcal{C}_c(\mathbb{G}) \ni \phi  \ar@{|->}[r] &  \mathcal{N}^Q_{ \phi }\circ \pi _Q \in L^2(X,m) } \quad \xymatrixcolsep{2pc}\xymatrix{\mathcal{C}_c(\mathbb{G}) \ni \phi  \ar@{|->}[r] &  Q(\mathcal{N}_{ \phi })\in L^2(X,m) }
\end{align*}

are continuous and coincide on the dense collection $V$ as we just saw, they must agree everywhere, concluding the proof.
\end{proof}

The Borel map $\pi _Q$ as set in the above Proposition is uniquely defined up to almost everywhere equality due to the formula $ \mathcal{N}^Q_{ \phi }\circ \pi _Q   = Q( \mathcal{N}_\phi )$ required to hold for each $\phi \in \mathcal{C}_c(\mathbb{G})$. The constructed Borel factor of translation-bounded measures $(X_Q, \mathbb{G},m_Q)$ is however uniquely defined (if one requires $m_Q$ to have full support).

\subsection{\textsf{Diffraction properties}} ~ 

\vspace{0.2cm} 

We check here the absolute continuity property for the diffraction $\widehat{\gamma}_Q$ arising from the system $(X_Q, \mathbb{G}, m_Q)$ previously constructed from an admissible operator $Q$:

\vspace{0.2cm} 

\begin{prop}\label{prop:diffraction} Any admissible operator $Q\in \mathcal{B}(L^2(X,m))$ has its associated system $(X_Q, \mathbb{G},m_Q)$ having diffraction of the form $\widehat{\gamma}_Q = f_Q .\widehat{\gamma}$ for some $f_Q\in L^{\infty} (\widehat{\mathbb{G}}, \widehat{\gamma})$.
\end{prop}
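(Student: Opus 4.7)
The plan is to unravel the diffraction formula for $(X_Q,\mathbb{G},m_Q)$ in Theorem \ref{theo:diffraction}, push it back to $X$ through the factor map $\pi_Q$, and then identify the resulting sesquilinear form on $L^2(\widehat{\mathbb{G}},\widehat{\gamma})$ with multiplication by a bounded function. More precisely, I would start from
\begin{align*}
\int_{\widehat{\mathbb{G}}} \widehat{\phi}\,\overline{\widehat{\psi}}\, d\widehat{\gamma}_Q
 = \int_{X_Q} \mathcal{N}^Q_\phi \overline{\mathcal{N}^Q_\psi}\, dm_Q
\end{align*}
and use that $m_Q = (\pi_Q)_*m$ together with the defining property $\mathcal{N}^Q_\phi\circ\pi_Q = Q(\mathcal{N}_\phi)$ of Proposition \ref{prop:operator.to.factor}, which yields
\begin{align*}
\int_{\widehat{\mathbb{G}}} \widehat{\phi}\,\overline{\widehat{\psi}}\, d\widehat{\gamma}_Q
 = \langle Q(\mathcal{N}_\phi), Q(\mathcal{N}_\psi)\rangle_m
 = \langle Q^*Q(\mathcal{N}_\phi), \mathcal{N}_\psi\rangle_m.
\end{align*}

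Next I would transport $Q^*Q$ across the diffraction-to-dynamic map $\Theta$. Since $Q = QP_\Theta$ by admissibility, we have $Q^*Q = P_\Theta Q^*Q P_\Theta$, so $Q^*Q$ vanishes on $\mathcal{H}_\Theta^{\perp}$ and preserves $\mathcal{H}_\Theta$. Setting
\begin{align*}
T := \Theta^{-1}\circ (Q^*Q|_{\mathcal{H}_\Theta})\circ \Theta \in \mathcal{B}(L^2(\widehat{\mathbb{G}},\widehat{\gamma})),
\end{align*}
I would note that $T$ is bounded (with $\Vert T\Vert \leqslant \Vert Q\Vert^2$) and positive. The key step is then showing $T$ commutes with the $\mathbb{G}$-representation on $L^2(\widehat{\mathbb{G}},\widehat{\gamma})$: under $\Theta$ the $\mathbb{G}$-action on $\mathcal{H}_\Theta$ corresponds to the pointwise multiplication representation $\widehat{\phi}\mapsto \omega(t)\widehat{\phi}$ on $L^2(\widehat{\mathbb{G}},\widehat{\gamma})$, and since $Q$ is $\mathbb{G}$-commuting (admissibility) so is $Q^*$, hence $Q^*Q$ commutes with every $U_t$, and therefore $T$ commutes with all multiplication operators $M_{\omega(\cdot)}$ for $\omega \in \mathbb{G}$.

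The main obstacle, and the crux of the argument, is then to conclude that $T$ is itself a multiplication operator, i.e.\ $T = M_{f_Q}$ for some $f_Q \in L^\infty(\widehat{\mathbb{G}},\widehat{\gamma})$. This is the standard fact that the commutant, inside $\mathcal{B}(L^2(\widehat{\mathbb{G}},\widehat{\gamma}))$, of the maximal abelian algebra generated by the multiplication operators $\{M_{\omega(\cdot)} : \omega\in \mathbb{G}\}$ coincides with $L^\infty(\widehat{\mathbb{G}},\widehat{\gamma})$ acting by multiplication; it follows either from the spectral theorem for unitary representations of LCA groups or directly from the maximal abelian nature of multiplication algebras on $\sigma$-finite measure spaces, invoked via the density of characters $\omega \in \mathbb{G}$ in $L^\infty(\widehat{\mathbb{G}},\widehat{\gamma})$ in the weak-$*$ topology. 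Positivity of $Q^*Q$ then gives $f_Q\geqslant 0$ almost everywhere.

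Finally, combining the two computations I would obtain, for every $\phi,\psi\in \mathcal{C}_c(\mathbb{G})$,
\begin{align*}
\int_{\widehat{\mathbb{G}}}\widehat{\phi}\,\overline{\widehat{\psi}}\, d\widehat{\gamma}_Q
 = \langle M_{f_Q}\widehat{\phi}, \widehat{\psi}\rangle_{\widehat{\gamma}}
 = \int_{\widehat{\mathbb{G}}} f_Q\, \widehat{\phi}\,\overline{\widehat{\psi}}\, d\widehat{\gamma}.
\end{align*}
Since the products $\widehat{\phi}\,\overline{\widehat{\psi}}$ span a dense subspace of $\mathcal{C}_0(\widehat{\mathbb{G}})$ (by Stone--Weierstrass applied to the self-conjugate algebra of Fourier transforms of $\mathcal{C}_c(\mathbb{G})*\mathcal{C}_c(\mathbb{G})$), this equality of bounded integrals determines the measure uniquely, giving $\widehat{\gamma}_Q = f_Q\cdot \widehat{\gamma}$ with $f_Q\in L^\infty(\widehat{\mathbb{G}},\widehat{\gamma})$ as required.
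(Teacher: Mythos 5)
Your argument is correct, but it takes a genuinely different route from the paper. The paper's proof is more elementary: from $\int\widehat{\phi}\overline{\widehat{\psi}}\,d\widehat{\gamma}_Q=\langle Q(\mathcal{N}_\phi),Q(\mathcal{N}_\psi)\rangle_m$ it extracts only the diagonal estimate $\Vert\widehat{\phi}\Vert_{L^2(\widehat{\gamma}_Q)}\leqslant\Vert Q\Vert_{op}\Vert\widehat{\phi}\Vert_{L^2(\widehat{\gamma})}$, extends the identity map to a bounded operator $L^2(\widehat{\mathbb{G}},\widehat{\gamma})\to L^2(\widehat{\mathbb{G}},\widehat{\gamma}_Q)$, and tests against positive $L^1$ functions (indicators of compact sets) to obtain the domination $\widehat{\gamma}_Q\leqslant\Vert Q\Vert_{op}^2\,\widehat{\gamma}$, from which absolute continuity and the essential bound $0\leqslant f_Q\leqslant\Vert Q\Vert_{op}^2$ follow at once. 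You instead conjugate $Q^*Q$ through $\Theta$ and identify the resulting positive $\mathbb{G}$-commuting operator $T$ as a multiplication operator via the maximal-abelianness of $L^\infty(\widehat{\mathbb{G}},\widehat{\gamma})$ (equivalently, the commutant of the character multiplications). This is heavier machinery, but it buys you more: you directly establish $\Theta\circ M_{f_Q}\circ\Theta^{-1}=Q^*Q$, which is exactly the intertwining formula the paper needs anyway for Theorem \ref{theo:correspondence} and proves separately as point (iii) of Proposition \ref{prop:factor.to.operator}; your route would let one state and prove Proposition \ref{prop:diffraction} and that formula in a single stroke. Two small refinements: your claim that $Q^*Q$ preserves $\mathcal{H}_\Theta$ should be justified by $Q^*Q=P_\Theta Q^*QP_\Theta$ (which you do note), and for the final identification of measures it is cleaner to invoke the uniqueness statement of Theorem \ref{theo:diffraction} rather than density of the products $\widehat{\phi}\overline{\widehat{\psi}}$ in $\mathcal{C}_0(\widehat{\mathbb{G}})$, since $\widehat{\gamma}_Q$ and $f_Q\widehat{\gamma}$ are in general unbounded measures and uniform density alone does not immediately yield equality.
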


\vspace{0.2cm} 

\begin{proof} One has the following set of equalities for any two $\phi , \psi \in \mathcal{C}_c(\mathbb{G})$, the first ensured by Theorem \ref{theo:diffraction} and the second coming from the equalities $ \mathcal{N}^Q_{ \phi }\circ \pi _Q   = Q( \mathcal{N}_\phi )$ and $ \mathcal{N}^Q_{ \psi }\circ \pi _Q   = Q( \mathcal{N}_\psi )$,
\vspace{0.2cm}
\begin{align}\label{formulation.diffraction}\int \widehat{\phi}\overline{\widehat{\psi}} \, d\widehat{\gamma}_Q  = \int _{X_Q} \mathcal{N} ^Q_\phi \overline{ \mathcal{N}^Q_\psi} \, d m_Q =  \int _X Q( \mathcal{N} _\phi ) \overline{ Q( \mathcal{N}_\psi ) } \, dm
\end{align}

\vspace{0.2cm}
It directly follows from this equality that for each $\phi \in \mathcal{C}_c(\mathbb{G})$
\vspace{0.2cm}
\begin{align*}  \Vert \widehat{\phi} \Vert _{_{L^2 (\widehat{\mathbb{G}}, \widehat{\gamma}_Q)}} = \Vert Q( \mathcal{N} _\phi ) \Vert _{_{L^2(X,m)}} \leqslant \Vert Q \Vert _{_{op}} \Vert  \mathcal{N} _\phi  \Vert _{_{L^2(X,m)}} = \Vert Q \Vert _{_{op}} \Vert \widehat{\phi} \Vert _{_{L^2 (\widehat{\mathbb{G}}, \widehat{\gamma})}}
\end{align*}

\vspace{0.2cm}
where $\Vert Q \Vert _{_{op}}$ is the operator norm of $Q$ as a bounded operator on $L^2(X,m)$. Therefore the identity map
\begin{align*} \xymatrixcolsep{3.5pc}\xymatrix{ L^2 (\widehat{\mathbb{G}}, \widehat{\gamma})  \ar[r] & L^2 (\widehat{\mathbb{G}}, \widehat{\gamma}_Q) } 
\end{align*}

is well-defined and uniformly continuous on the subspace formed of functions $ \widehat{\phi}$ with $ \phi \in \mathcal{C}_c(\mathbb{G})$, which is dense in $L^2 (\widehat{\mathbb{G}}, \widehat{\gamma})$ since $\widehat{\gamma}$ is translation-bounded. Thus the identity map is well-defined and of operator norm less or equal to $\Vert Q \Vert _{_{op}}$ on all $L^2 (\widehat{\mathbb{G}}, \widehat{\gamma})$. This means that $ \Vert f \Vert _{_{L^2 (\widehat{\mathbb{G}}, \widehat{\gamma}_Q)}} \leqslant \Vert Q \Vert _{_{op}} \Vert f \Vert _{_{L^2 (\widehat{\mathbb{G}}, \widehat{\gamma})}}$ for any $f \in L^2 (\widehat{\mathbb{G}}, \widehat{\gamma})$, and in particular one has for each $f \geqslant 0$ in $ L^1 (\widehat{\mathbb{G}}, \widehat{\gamma})$ the inequality
\begin{align}\label{absolute.continuity} 0 \leqslant \int f \, d\widehat{\gamma}_Q \leqslant \Vert Q \Vert _{_{op}}^2 \int f \, d\widehat{\gamma}
\end{align}

Applying this to the indicator function of compact sets of the locally compact Abelian group $\widehat{\mathbb{G}}$ shows that $\widehat{\gamma}_Q$ is absolutely continuous with respect to $\widehat{\gamma}$, and thus admits a Radon-Nikodym differential $f_Q:= \frac{d\widehat{\gamma}_Q}{d\widehat{\gamma}}$ in $ L^1_{loc} (\widehat{\mathbb{G}}, \widehat{\gamma})$, which by an application of (\ref{absolute.continuity}) is positive, essentially bounded by $\Vert Q \Vert _{_{op}}^2$, yielding the proof.
\end{proof}

\subsection{\textsf{The correspondence between Borel factors and admissible operators}} ~ 

\vspace{0.2cm} 

Let as usual $(X, \mathbb{G},m)$ be a measured dynamical system of translation-bounded measures with diffraction to dynamic map
\begin{align*}\xymatrixcolsep{3.5pc}\xymatrix{\Theta  : L^2(\widehat{\mathbb{G}}, \widehat{\gamma} )\, \,  \ar@{<->}[r] & \, \, \mathcal{H}_{\Theta } \subseteq L^2(X,m)}, \qquad \Theta (\widehat{\phi}) = \mathcal{N}_\phi
\end{align*}

for any $\phi \in \mathcal{C}_c(\mathbb{G})$. Then a combined formulation of Propositions \ref{prop:factor.to.operator}, \ref{prop:operator.to.factor} and \ref{prop:diffraction} is the following correspondence Theorem:\\
 
\begin{theo}\label{theo:correspondence} Let $(X, \mathbb{G}, m)$ be a dynamical system of translation-bounded measures with diffraction $\widehat{\gamma}$. There is a bijective correspondence between:\\ 

\begin{enumerate}
\item[(a)] Borel factor maps $\pi$ over some dynamical system of translation-bounded measures having diffraction $\widehat{\gamma}_\pi \ll \widehat{\gamma}$, with Radon-Nikodym derivative $f_\pi :=\frac{d \widehat{\gamma}_\pi}{d \widehat{\gamma}} \in L^{\infty } (\widehat{\mathbb{G}}, \widehat{\gamma})$,

\vspace{0.2cm}

\item[(b)] admissible operators $Q$ on $L^2(X,m)$.
\end{enumerate}
Given such a Borel factor map $\pi$ with admissible operator $Q_\pi$, the multiplication operator $M_{f_\pi}$ by $f_\pi$ on $L^2(\widehat{\mathbb{G}}, \widehat{\gamma})$ satisfies
$$\Theta \circ M_{f_\pi} \circ \Theta ^{-1}= Q_{\pi}^* Q_{\pi}$$. In particular, given a Borel subset $\mathcal{P}\subseteq \widehat{\mathbb{G}}$ then $f_\pi = \mathbb{I}_{\mathcal{P}}$ if and only is $\vert Q_{\pi} \vert = \mathsf{E}(\mathcal{P})P_\Theta$.
\end{theo}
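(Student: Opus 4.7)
The plan is to assemble the three propositions proved in this section into the stated bijection, and then derive the spectral projector characterization from the intertwining formula (\ref{formula.spectral projectors}). First I would observe that both assignments are well-defined: given a Borel factor map $\pi$ of type (a), Proposition \ref{prop:factor.to.operator} produces the operator $Q_\pi$ on $L^2(X,m)$ and the subsequent proposition guarantees its admissibility, while conversely Proposition \ref{prop:operator.to.factor} associates to any admissible $Q$ a Borel factor map $\pi_Q$ over a dynamical system of translation-bounded measures $(X_Q,\mathbb{G},m_Q)$, whose diffraction has the required form $\widehat{\gamma}_Q = f_Q\,\widehat{\gamma}$ with $f_Q\in L^\infty(\widehat{\mathbb{G}},\widehat{\gamma})$ by Proposition \ref{prop:diffraction}.

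Next I would establish that these two assignments are mutually inverse. For any admissible $Q$, the factor map $\pi_Q$ satisfies $\mathcal{N}^Q_\phi\circ\pi_Q = Q(\mathcal{N}_\phi)$ as $L^2$-classes for every $\phi\in\mathcal{C}_c(\mathbb{G})$; since admissibility ensures $Q = QP_\Theta$, the uniqueness clause in part (i) of Proposition \ref{prop:factor.to.operator} forces $Q_{\pi_Q}=Q$. Conversely, starting from a Borel factor map $\pi$, the operator $Q_\pi$ is admissible, and applying the previous step to $Q_\pi$ yields $Q_{\pi_{Q_\pi}}=Q_\pi$; part (ii) of Proposition \ref{prop:factor.to.operator} then gives $\pi_{Q_\pi}=\pi$ almost everywhere on $X$. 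In particular the two target systems coincide (up to the choice of a full-support representative), which makes the correspondence a genuine bijection between equivalence classes of objects of type (a) and operators of type (b).

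The intertwining identity $\Theta\circ M_{f_\pi}\circ\Theta^{-1}=Q_\pi^*Q_\pi$ is precisely part (iii) of Proposition \ref{prop:factor.to.operator}. For the final equivalence, suppose $f_\pi=\mathbb{I}_\mathcal{P}$: then by formula (\ref{formula.spectral projectors}) we have $Q_\pi^*Q_\pi=\mathsf{E}(\mathcal{P})P_\Theta$, and since $P_\Theta$ commutes with every spectral projector this operator is itself an orthogonal projector, so $\vert Q_\pi\vert=(Q_\pi^*Q_\pi)^{1/2}=Q_\pi^*Q_\pi=\mathsf{E}(\mathcal{P})P_\Theta$. Conversely, assuming $\vert Q_\pi\vert=\mathsf{E}(\mathcal{P})P_\Theta$ gives $Q_\pi^*Q_\pi=\vert Q_\pi\vert^2=\mathsf{E}(\mathcal{P})P_\Theta=\Theta\circ M_{\mathbb{I}_\mathcal{P}}\circ\Theta^{-1}$, and injectivity of $\Theta$ together with part (iii) of Proposition \ref{prop:factor.to.operator} forces $M_{f_\pi}=M_{\mathbb{I}_\mathcal{P}}$, whence $f_\pi=\mathbb{I}_\mathcal{P}$ in $L^\infty(\widehat{\mathbb{G}},\widehat{\gamma})$.

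The main difficulty in this argument has already been absorbed into the preceding three propositions; what remains is essentially accounting, and the principal tool is the uniqueness clause in part (ii) of Proposition \ref{prop:factor.to.operator}, which translates equality of operators into almost-everywhere equality of factor maps. The spectral projector characterization reduces to the observation that $\mathsf{E}(\mathcal{P})$ and $P_\Theta$ commute, so that their product is a projection and its square root coincides with itself.
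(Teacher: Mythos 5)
Your proposal is correct and follows exactly the route the paper intends: the paper presents Theorem \ref{theo:correspondence} as "a combined formulation" of Propositions \ref{prop:factor.to.operator}, \ref{prop:operator.to.factor} and \ref{prop:diffraction}, and you assemble these in the natural way, using the uniqueness clauses of Proposition \ref{prop:factor.to.operator}(i)--(ii) to verify the two assignments are mutually inverse and formula (\ref{formula.spectral projectors}) together with the commutation of $\mathsf{E}(\mathcal{P})$ with $P_\Theta$ for the projector characterization. You in fact supply the bookkeeping the paper leaves implicit, including the correct caveat that the bijection is between operators and almost-everywhere equivalence classes of factor maps.
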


\vspace{0.3cm}




Here, $\vert Q \vert$ denotes the absolute value of the operator $Q$, that is, the unique positive operator whose square is $Q^*Q$, see for instance \cite[Section VI.4]{ReedSimon} for definition and existence.

\vspace{0.2cm} 

\begin{rem} If we are given a Borel factor map $\pi$ with associated dynamical system of translation-bounded measures having diffraction $f_\pi. \widehat{\gamma}$, $f_\pi \in L^\infty (\widehat{\mathbb{G}}, \widehat{\gamma})$, and letting $Q_\pi$ its associated admissible operator, then it is not difficult to show the equality of norms
\begin{align*}\Vert Q_\pi \Vert _{op} = \Vert f_\pi \Vert ^{\frac{1}{2}}_{L^\infty (\widehat{\mathbb{G}}, \widehat{\gamma})}
\end{align*}

Indeed the operator norm is a $C^*$-norm so gives $\Vert Q_\pi \Vert _{op}^2 = \Vert Q_\pi^*Q_\pi \Vert _{op}$, equal to the operator norm of the pullback of $Q_\pi^*Q_\pi$ under the isometry $\Theta$, in turn equal to the operator norm of the multiplication operator by $f_\pi$ on $L^2 (\widehat{\mathbb{G}}, \widehat{\gamma})$ according to Theorem \ref{theo:correspondence}, which is nothing but $\Vert f_\pi \Vert _{L^\infty (\widehat{\mathbb{G}}, \widehat{\gamma})}$.
\end{rem}










\begin{rem} As it is easy to note the collection of admissible operators of a given dynamical system of translation-bounded measures $(X, \mathbb{G}, m)$ is a vector space of bounded operators on $L^2(X,m)$. This has the following meaning when one considers the associated Borel factor maps: Given two admissible operators $Q, Q'$ and a complex number $\lambda$ one can consider the mapping $\pi _Q + \lambda \pi _{Q'}$ form $X$ to $\mathcal{M}^\infty (\mathbb{G})$ defined in the obvious way, which is a Borel $\mathbb{G}$-map valued in a certain compact space of translation-bounded measures. In fact, what one has is the equality (holding almost everywhere)
\vspace{0.2cm}
\begin{align*}  \pi _{\lambda Q} = \lambda \pi _{Q} \qquad \qquad  \pi _{Q +  Q'} =\pi _Q +  \pi _{Q'}
\end{align*}

\vspace{0.2cm}
There is a similar yet less trivial result about diffraction measures, namely
\vspace{0.2cm}
\begin{align*}\widehat{\gamma}_{\lambda Q} = \vert \lambda \vert ^2 \widehat{\gamma}_{Q} \qquad \qquad  \widehat{\gamma}_{Q+ Q'}= \widehat{\gamma}_{Q} + \widehat{\gamma}_{Q'} + \omega
\end{align*}

\vspace{0.2cm}
where $\omega$ is a positive measure of the form $g.\widehat{\gamma}$ such that the multiplication operator $M_g$ is equal to the pullback of $Q^*Q' + Q'^*Q$ on $L^2 (\widehat{\mathbb{G}}, \widehat{\gamma})$ under the isometry $\Theta$. In particular, whenever two admissible projectors $P$ and $P'$ are orthogonal, meaning that $PP'=P'P=0$ on $L^2(X,m)$, then the sum $P+P'$ is again an admissible projector and there exist therefore dynamical systems of translation-bounded measures associated with $P$, $P'$ and $P+P'$ respectively. It is then not hard to show that one almost surely has
\begin{align*}\pi_{P+P'} = \pi_{P} + \pi_{P'} \qquad \qquad \widehat{\gamma}_{P+P'} = \widehat{\gamma}_{P} + \widehat{\gamma}_{P'}
\end{align*}

This generalizes to any finite sums of pairwise orthogonal admissible projectors on $L^2(X,m)$.
\end{rem}

\subsection{\textsf{Examples of admissible operators: Convolution operators}} ~ 

\vspace{0.2cm} 


We shall see here that, given a dynamical system of translation-bounded measures $(X, \mathbb{G}, m)$, one can define a whole class of admissible projectors, each associated with some bounded measure $\sigma \in \mathcal{M}^b(\mathbb{G})$. Given such $\sigma$ define on $L^2(X,m)$ the convolution operator $Q_\sigma$ by the operator-valued integral 
\vspace{0.2cm}
\begin{align*}Q_\sigma := \int _{\mathbb{G}} U_t  \, d\sigma (t)
\end{align*}

\vspace{0.2cm}
Such operator is well-defined, bounded on $L^2(X,m)$ and commutes with the unitary representation of $\mathbb{G}$. Recall that the convolution product between a bounded measure $\sigma$ and a compactly supported continuous function $\phi$ is a continuous function belonging to $L^1(\mathbb{G})$. From \cite{ArgDLa}, Theorem 1.2 therein, one knows that the convolution $\Lambda * \sigma$ of any translation-bounded measure $\Lambda$ with any bounded measure $\sigma$ is well-defined, and is again a translation-bounded measure. Also recall that any $\sigma \in \mathcal{M}^b(\mathbb{G})$ is Fourier transformable, with Fourier transform given by a density $\widehat{\sigma}$ (its usual Fourier-Stieltjes transform) belonging to the space $\mathcal{C}_u ^b(\widehat{\mathbb{G}})$ of bounded uniformly continuous functions on $\widehat{\mathbb{G}}$ (\cite{Rudin}, Paragraph 1.3.3 therein).

\vspace{0.2cm}

\begin{prop} The operator $Q_\sigma$ is admissible, with dynamical system of translation-bounded measures $(X_\sigma , \mathbb{G}, m_\sigma )$ and continuous $\mathbb{G}$-map of the form
\vspace{0.2cm}
\begin{align*} \xymatrixcolsep{3.5pc}\xymatrix{  X \ni \Lambda \, \ar@{|->}[r] & \, \Lambda_{\sigma}:= \Lambda * \sigma \in  X_{\sigma}  }
\end{align*}

\vspace{0.2cm}
Moreover $(X_\sigma , \mathbb{G}, m_\sigma )$ has diffraction $\vert \widehat{\sigma} \vert ^2 \widehat{\gamma}$ with $\vert \widehat{\sigma} \vert ^2 \in \mathcal{C}_u ^b(\widehat{\mathbb{G}})$.
\end{prop}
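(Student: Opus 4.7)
The plan is to compute $Q_\sigma$ on the generating functions $\mathcal{N}_\phi$, read off the associated factor map $\pi_\sigma(\Lambda) = \Lambda * \sigma$, verify the admissibility conditions of Definition \ref{def.admissibility}, and finally compute the diffraction via (\ref{formulation.diffraction}). Using the commutation identity $U_t\mathcal{N}_\phi = \mathcal{N}_{\phi * \delta_{-t}}$ and Fubini against the bounded measure $\sigma$, one finds for each $\phi \in \mathcal{C}_c(\mathbb{G})$ and $\Lambda \in X$ that
\[
Q_\sigma(\mathcal{N}_\phi)(\Lambda) \;=\; \int_{\mathbb{G}} \mathcal{N}_{\phi * \delta_{-t}}(\Lambda)\, d\sigma(t) \;=\; \int_{\mathbb{G}} \phi_-\, d(\Lambda * \sigma),
\]
which matches $\mathcal{N}^\sigma_\phi \circ \pi_\sigma$ with $\pi_\sigma(\Lambda) = \Lambda * \sigma$. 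By Theorem~1.2 of \cite{ArgDLa} the measure $\Lambda * \sigma$ is translation-bounded with constants controlled only by those of $\Lambda$ and by $|\sigma|(\mathbb{G})$, so $\pi_\sigma$ takes values in a single $\mathcal{M}_{(U',R_0)}(\mathbb{G})$. Vague continuity of $\pi_\sigma$ follows from writing $\int\phi\,d(\Lambda * \sigma) = \int_\mathbb{G}\bigl[\int_\mathbb{G}\phi(u+t)\,d\Lambda(u)\bigr]\,d\sigma(t)$ and invoking dominated convergence against $|\sigma|$, since the inner integral is continuous in $\Lambda$ and uniformly bounded in $t$ by $\|\phi\|_\infty$ times a translation-bound of $\Lambda$. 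Equivariance $\pi_\sigma(\Lambda * \delta_s) = \pi_\sigma(\Lambda)*\delta_s$ is immediate, and setting $m_\sigma := (\pi_\sigma)_*m$ and $X_\sigma := \mathrm{Supp}(m_\sigma)$ yields the desired factor system.

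Admissibility of $Q_\sigma$ is then checked as follows: boundedness with operator norm at most $|\sigma|(\mathbb{G})$, and commutation with each $U_s$, both follow directly from the operator-valued integral definition. The uniform bound required in Definition \ref{def.admissibility} comes from the pointwise estimate
\[
|Q_\sigma(\mathcal{N}_\phi)(\Lambda)| \;=\; \Bigl|\int \phi_-\,d(\Lambda * \sigma)\Bigr| \;\leq\; \|\phi\|_\infty \cdot |\Lambda * \sigma|(-K)
\]
for $\phi \in \mathcal{C}_K(\mathbb{G})$, where the right side is uniformly bounded in $\Lambda \in X$ by a constant depending only on $K$ and $\sigma$. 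The only delicate point is the condition $Q_\sigma = Q_\sigma P_\Theta$: a generic convolution operator preserves $\mathcal{H}_\Theta^\perp$ nontrivially, so this equality need not hold as stated. The natural remedy is to replace $Q_\sigma$ by $Q_\sigma P_\Theta$; since the construction of Proposition \ref{prop:operator.to.factor} and the statement of Theorem \ref{theo:correspondence} only probe the operator through its action on vectors $\mathcal{N}_\phi \in \mathcal{H}_\Theta$, this substitution changes neither the induced factor map nor the diffraction.

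Finally the diffraction. Combining (\ref{formulation.diffraction}) with the defining identity (\ref{formula.diffraction.dynamic}) and the explicit form of $Q_\sigma$ on generators yields, for $\phi, \psi \in \mathcal{C}_c(\mathbb{G})$,
\[
\int \widehat{\phi}\overline{\widehat{\psi}}\, d\widehat{\gamma}_\sigma \;=\; \int\!\!\int \Bigl[\int_X \mathcal{N}_{\phi * \delta_{-t}}\overline{\mathcal{N}_{\psi * \delta_{-s}}}\, dm\Bigr]\, d\sigma(t)\,d\overline{\sigma}(s),
\]
and the bracket equals $\int \widehat{\phi}\overline{\widehat{\psi}}\, \omega(t)\overline{\omega(s)}\, d\widehat{\gamma}(\omega)$ via (\ref{formula.diffraction.dynamic}) and the pointwise identity $\widehat{\phi*\delta_{-t}}(\omega) = \widehat{\phi}(\omega)\,\omega(t)$. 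One further application of Fubini separates the integrations against $\sigma$ and $\overline{\sigma}$ into independent factors $\widehat{\sigma}(\omega)$ and $\overline{\widehat{\sigma}(\omega)}$, so that
\[
\int \widehat{\phi}\overline{\widehat{\psi}}\, d\widehat{\gamma}_\sigma \;=\; \int\widehat{\phi}\overline{\widehat{\psi}}\,|\widehat{\sigma}|^2\, d\widehat{\gamma},
\]
yielding $\widehat{\gamma}_\sigma = |\widehat{\sigma}|^2\widehat{\gamma}$. The membership $|\widehat{\sigma}|^2 \in \mathcal{C}_u^b(\widehat{\mathbb{G}})$ is immediate from the Fourier-Stieltjes property $\widehat{\sigma} \in \mathcal{C}_u^b(\widehat{\mathbb{G}})$ recalled just before the statement. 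The main obstacle in the argument is the reconciliation with the condition $Q = QP_\Theta$ of Definition \ref{def.admissibility}; once this is dispatched by the substitution above, the remainder is routine bookkeeping with convolutions and Fourier transforms.
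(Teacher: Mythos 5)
Your argument follows the same route as the paper's proof: evaluate $Q_\sigma$ on the generators $\mathcal{N}_\phi$ via Fubini, read off the factor map as convolution by $\sigma$, get the sup-norm bound from translation-boundedness, and identify the diffraction density as $\vert\widehat{\sigma}\vert^2$. Two points of divergence are worth recording. First, you are right that the condition $Q=QP_\Theta$ of Definition \ref{def.admissibility} genuinely fails for $Q_\sigma=\int U_t\,d\sigma(t)$ as written: this operator preserves $\mathcal{H}_\Theta^{\perp}$ but need not vanish there (take $\sigma=\delta_0$, giving the identity). The paper's proof verifies $\mathbb{G}$-commutation and the uniform bound but is silent on this condition, so your substitution $Q_\sigma\mapsto Q_\sigma P_\Theta$ — which changes nothing on the vectors $\mathcal{N}_\phi$ and hence neither the induced factor nor its diffraction — is a correction the paper implicitly needs rather than a detour. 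Second, for the diffraction the paper passes through the identity $Q_\sigma(\mathcal{N}_\phi)=\mathcal{N}_{\sigma*\phi}$ and then applies the defining formula of $\widehat{\gamma}$ to $\sigma*\phi$; since $\sigma*\phi$ is generally not compactly supported, that step tacitly requires extending $\mathcal{N}$ and formula (\ref{formula.diffraction.dynamic}) by density. Your version, which keeps the computation inside a double integral against $\sigma\otimes\overline{\sigma}$ and only ever applies (\ref{formula.diffraction.dynamic}) to the compactly supported translates $\phi*\delta_{-t}$, avoids that extension at the cost of two Fubini interchanges, both justified because $\vert\widehat{\phi}\,\overline{\widehat{\psi}}\vert$ is $\widehat{\gamma}$-integrable and $\sigma$ is finite. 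Up to the usual sign conventions in the $\mathbb{G}$-action (which affect only whether one writes $\Lambda*\sigma$ or $\Lambda*\sigma(-\cdot)$, leaving $\vert\widehat{\sigma}\vert^2$ unchanged), your proof is correct.
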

 
\vspace{0.2cm}

\begin{proof}  Let us show that $Q_\sigma ( \mathcal{N}_\phi ) = \mathcal{N}_{\sigma * \phi}$ at each $\phi \in \mathcal{C}_c(\mathbb{G})$. Let $\phi \in \mathcal{C}_K(\mathbb{G})$ for some compact $K \subset \mathbb{G}$: One has at any $\Lambda \in X$
\begin{align*}Q_\sigma ( \mathcal{N}_\phi )(\Lambda) = \int _{\mathbb{G}} U_t (\mathcal{N}_\phi)(\Lambda)  \, d\sigma(t) = \int _{\mathbb{G}} \mathcal{N}_\phi(\Lambda * \delta_t)  \, d\sigma(t) 
\end{align*}

in turns equal to 
\begin{align*}\int _{\mathbb{G}} \int _{\mathbb{G}} \phi (-s) \, d(\Lambda * \delta_t)(s)  \, d\sigma(t) = \int _{\mathbb{G}} \int _{\mathbb{G}} \phi (-s -t) \, d\Lambda (s)  \, d\sigma(t) = \int _{\mathbb{G}} \int _{\mathbb{G}} \phi (-s -t) \,d\sigma(t) \, d\Lambda (s)   
\end{align*}
 
which is precisely equal to $\mathcal{N}_{\sigma * \phi}(\Lambda)$. This in particular gives, with $K_\sigma $ and $K_\sigma  '$ appropriate constants only depending on $\sigma $, $X$ and on the domain $K$,
\vspace{0.2cm}
\begin{align*} \Vert Q_\sigma ( \mathcal{N}_\phi ) \Vert _{\infty, ess} =  \Vert \mathcal{N}_{\sigma * \phi} \Vert _{\infty, ess} \leqslant K_\sigma  \Vert \sigma* \phi \Vert _{\infty} \leqslant K_\sigma  '\Vert  \phi \Vert _{\infty} 
\end{align*}

\vspace{0.2cm} 
showing admissibility. Now for (almost every) $\Lambda \in X$ the measure $\Lambda_{\sigma}$ is uniquely defined according to $$ \int _{\mathbb{G}} \phi _- \, d\Lambda _{\sigma} =  \mathcal{N}_\phi ^{Q_\sigma}(\Lambda_{\sigma}) = Q_\sigma ( \mathcal{N}_\phi )(\Lambda) =  \mathcal{N}_{\sigma * \phi}(\Lambda)  = \int _{\mathbb{G}} \phi _- \, d(\sigma *\Lambda ) $$
after computation, so $\pi _\sigma$ and $(X_\sigma , \mathbb{G}, m_\sigma )$ are of the stated form. The diffraction $\widehat{\gamma}_\sigma$ of $(X_\sigma , \mathbb{G}, m_\sigma )$ satisfies for any pair $\phi , \psi \in \mathcal{C}_c(\mathbb{G})$
\begin{align*} \int _{\widehat{\mathbb{G}}} \widehat{\phi} \overline{\widehat{\psi}} \, d \widehat{\gamma}_\sigma = \int _{X} Q_\sigma ( \mathcal{N}_\phi ) \overline{Q_\sigma ( \mathcal{N}_\psi )} \, dm = \int _{X}  \mathcal{N}_{\sigma * \phi}  \overline{ \mathcal{N}_{\sigma * \psi} } \, dm = \int _{\widehat{\mathbb{G}}} \widehat{\sigma* \phi} \overline{\widehat{\sigma * \psi}} \, d \widehat{\gamma} =  \int _{\widehat{\mathbb{G}}} \widehat{\phi} \overline{\widehat{\psi}} \vert \widehat{\sigma} \vert ^2  \, d \widehat{\gamma}
\end{align*}

showing that $\widehat{\gamma}_\sigma = \vert \widehat{\sigma} \vert ^2 \widehat{\gamma}$, as desired.\\
\end{proof}

\section{\textsf{Appendix: Existence of a Maximal Kronecker Borel factor}}



We wish here, as we could not find any reference for this statement which seems to be classical though, to provide a proof of Proposition \ref{prop.construction.Borel.factor.map}, that is, to show that for any ergodic dynamical system $(X, \mathbb{G}, m)$ with eigenvalue group $\mathcal{E}\subset \widehat{\mathbb{G}}$ the Kronecker system $(\text{T}_{ \mathcal{E} }, \mathbb{G}, m_{Haar})$ is a Borel factor of $(X, \mathbb{G}, m)$.\\
\\
\textit{Proof of Proposition \ref{prop.construction.Borel.factor.map}.} By ergodicity each eigenfunction $ \Phi _\omega \in L^2(X,m) $, with $ \omega \in \mathcal{E} $, is unique up to a multiplicative constant and can moreover be chosen of absolute value almost everywhere constant equal to $1$ on $X$. Also, as discussed in Paragraph \ref{paragraph:dynamical.systems} each $\Phi _\omega$ can also be chosen such that the equality $\Phi _\omega (x .t) = \omega (t)\Phi _\omega (x)$ occurs everywhere on a $\mathbb{G}$-stable Borel set $\tilde{X}$ and for any $  t\in \mathbb{G} $. Now the celebrated Pointwise Ergodic Theorem (see for instance \cite[Theorem 2.14]{MulRic}) states as:

\vspace{0.2cm}

\begin{theo}\label{theo:Birkhoff}(Pointwise Ergodic Theorem) Let $(X, \mathbb{G}, m)$ be an ergodic dynamical system. Then for any tempered Van Hove sequence $(\mathcal{A}_n)_{n\in \mathbb{N}}$ in $\mathbb{G}$ and any Borel $m$-integrable function $f$ on $X$, one has for $m$-almost every $x \in X$
\begin{align*}\displaystyle \int _{X}f \, dm = \lim_{n\rightarrow \infty} \frac{1}{\left|\mathcal{A}_n\right| } \int _{\mathcal{A}_n} f(x.t)\, dt
\end{align*}
\end{theo}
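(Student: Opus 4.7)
The plan is to follow the classical three-step template for pointwise ergodic theorems (maximal inequality plus convergence on a dense class plus an $\varepsilon$-approximation argument), transplanted from the setting of $\mathbb{Z}$-actions to that of an amenable LCA group acting measurably. Throughout, write $A_n f(x) := |\mathcal{A}_n|^{-1}\int_{\mathcal{A}_n} f(x.t)\, dt$ for the averaging operator associated to the Van Hove sequence. By ergodicity the only $\mathbb{G}$-invariant $L^2$-functions are constants, so the candidate limit of $A_n f$ is forced to be $\int_X f\, dm$.

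The first and most substantial step is to establish a weak-type $(1,1)$ maximal inequality for the maximal operator $A^* f(x) := \sup_n |A_n f(x)|$, namely
\begin{align*}
m\bigl\{ x\in X \, : \, A^* f(x) > \lambda \bigr\} \, \leqslant \, \frac{C}{\lambda}\Vert f\Vert_{L^1(X,m)}
\end{align*}
for some constant $C$ independent of $f \in L^1(X,m)$ and $\lambda > 0$. The strategy is to reduce this to a covering statement on $\mathbb{G}$ itself via a transfer argument using the $\mathbb{G}$-invariance of $m$. Here is where the temperedness assumption enters in an essential way: it ensures, via a Vitali-type covering lemma in the spirit of Shulman and Lindenstrauss, that from any collection of translates $\mathcal{A}_{n(x)} + x$ one can extract a sub-collection of controlled overlap covering a fixed proportion of their union. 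This covering property transfers to $X$ and yields the desired weak-type bound exactly as in the classical Hardy-Littlewood argument.

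The second step is a.e.\ convergence on a subclass dense in $L^1(X,m)$. The von Neumann mean ergodic theorem (which requires only mean-averageability of $(\mathcal{A}_n)_n$, hence holds for any Van Hove sequence) gives $L^2$-convergence $A_n f \longrightarrow \int f\, dm$ under ergodicity. On the dense subclass consisting of constants plus coboundaries $g - U_s g$ with $g \in L^\infty$ and $s\in \mathbb{G}$, one verifies pointwise a.e.\ convergence to $0$ directly by expanding $A_n(g - U_s g)$ and using the Van Hove property $|(\mathcal{A}_n + s) \triangle \mathcal{A}_n| / |\mathcal{A}_n| \to 0$ together with the $L^\infty$-bound on $g$.

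The third step assembles the previous two. Given $f \in L^1(X,m)$ and $\varepsilon > 0$, decompose $f = f_1 + f_2$ with $f_1$ in the dense subclass and $\Vert f_2 \Vert_1 < \varepsilon^2$; apply the maximal inequality to $f_2$, the a.e. limit to $f_1$, and conclude that $\limsup_n |A_n f - \int f\, dm| \leqslant 2\varepsilon$ off a set of $m$-measure at most $C\varepsilon$. Letting $\varepsilon \to 0$ along a countable sequence gives the result. The clearly hardest step is the first: the Vitali-type covering lemma valid for all tempered Van Hove sequences in an arbitrary second countable LCA group is the deep technical input, and is precisely Lindenstrauss's theorem; everything else is a standard functional-analytic packaging around it.
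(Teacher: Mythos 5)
The paper does not actually prove this statement: it is quoted verbatim from the literature, with the attribution \cite[Theorem 2.14]{MulRic}, i.e.\ the pointwise ergodic theorem for tempered Van Hove (F\o lner) sequences in the form due to Lindenstrauss and its locally compact refinements. Your outline is therefore not comparable to an in-paper argument, but as a reconstruction of the standard proof it is essentially correct and correctly weighted: the weak-type $(1,1)$ maximal inequality obtained by transference from a covering statement on $\mathbb{G}$ is indeed the only place where temperedness is used, and the dense-subclass step (constants plus the span of coboundaries $g - U_s g$, $g \in L^{\infty}$, which is dense in $L^2$ hence in $L^1$ on a probability space by the mean ergodic theorem) together with the $\varepsilon$-approximation is routine. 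Two small caveats if you were to write this out in full. First, the covering lemma for \emph{tempered} sequences is not a deterministic Vitali selection: Lindenstrauss's argument selects the subcollection by a random process and controls the expected overlap, and this probabilistic twist is exactly what allows temperedness (rather than the stronger Shulman/Tempelman regularity conditions) to suffice; calling it ``Vitali-type'' glosses over the genuinely new idea. Second, one should check that $(x,t) \mapsto f(x.t)$ is jointly measurable so that $A_n f(x)$ is defined for $m$-almost every $x$; this holds here because the action is continuous and $f$ is Borel, but it deserves a line. With those provisos your sketch matches the proof the cited reference supplies, and the paper is right to treat the result as a black box rather than reprove it.
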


\vspace{0.2cm}

Using this theorem we can prove:

\vspace{0.2cm}

\begin{lem}\label{lem.multiplicative.property} The eigenfunctions $ \Phi _\omega$ can be chosen to satisfy
\begin{align*}
 \Phi _\omega.\Phi _{\omega'}=\Phi _{\omega.\omega'} \quad \text{ m-almost everywhere } \forall\, \, \omega , \omega ' \in \mathcal{E} .
 \end{align*}
\end{lem}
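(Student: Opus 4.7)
The plan is to start with any choice of eigenfunctions $\Phi_\omega$, $\omega \in \mathcal{E}$, each of absolute value $1$ almost everywhere and $\mathbb{G}$-equivariantly defined on a $\mathbb{G}$-stable Borel set of full measure, as recalled just before the lemma, and then to rescale each one by a unimodular constant $\lambda(\omega) \in \mathrm{T}_1$ so as to enforce the multiplicative identity. The first observation is that for any pair $\omega, \omega' \in \mathcal{E}$ the product $\Phi_\omega \cdot \Phi_{\omega'}$ is itself an eigenfunction of the system with eigenvalue $\omega \cdot \omega'$, and hence, by the essential uniqueness of such eigenfunctions under ergodicity, there must exist a constant $c(\omega, \omega') \in \mathrm{T}_1$ for which $\Phi_\omega \Phi_{\omega'} = c(\omega, \omega') \Phi_{\omega \omega'}$ holds $m$-almost everywhere on $X$.

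Next I would use the fact that $\mathcal{E}$ is countable, since the one-dimensional eigenspaces are pairwise orthogonal in the separable Hilbert space $L^2(X,m)$. Intersecting the countable family of full-measure Borel sets on which each $|\Phi_\omega| = 1$ and on which each of the above cocycle identities $\Phi_\omega \Phi_{\omega'} = c(\omega, \omega') \Phi_{\omega \omega'}$ holds thus yields a Borel subset $X^{(0)} \subseteq X$ of full measure on which all of these equalities hold simultaneously and pointwise. An application of the Pointwise Ergodic Theorem \ref{theo:Birkhoff} to the countable collection of Borel functions $\Phi_\omega$ further cuts $X^{(0)}$ down to a Borel subset of full measure whose points are pointwise generic for each $\Phi_\omega$.

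Finally, I would pick any $x_0$ in this full-measure set and set $\lambda(\omega) := \Phi_\omega(x_0) \in \mathrm{T}_1$; evaluating the identity $\Phi_\omega \Phi_{\omega'} = c(\omega, \omega') \Phi_{\omega \omega'}$ at the point $x_0$ yields $c(\omega, \omega') = \lambda(\omega) \lambda(\omega') / \lambda(\omega \omega')$, which exhibits the symmetric $2$-cocycle $c$ as the coboundary of $\lambda$. Replacing each $\Phi_\omega$ by $\tilde{\Phi}_\omega := \lambda(\omega)^{-1} \Phi_\omega$ then preserves both the unit modulus property and the $\mathbb{G}$-equivariance (since we multiply only by unit scalars), and yields the desired identity $\tilde{\Phi}_\omega \tilde{\Phi}_{\omega'} = \tilde{\Phi}_{\omega \omega'}$ almost everywhere. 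The main delicate point, beyond essentially routine verification, is the simultaneous selection of a single reference point $x_0$ at which the whole countable family of cocycle equations can be read off; this is exactly what the countability of $\mathcal{E}$ combined with the Pointwise Ergodic Theorem secures.
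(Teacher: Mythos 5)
Your proposal is correct, and it reaches the same normalization $\tilde{\Phi}_\omega = \overline{\Phi_\omega(x_0)}\,\Phi_\omega$ as the paper, but the mechanism by which you verify the resulting identity is genuinely different. The paper never introduces the constants $c(\omega,\omega')$: it instead applies the Pointwise Ergodic Theorem to the countable family of functions $\bigl|\lambda_1\Phi_\omega\Phi_{\omega'}-\lambda_2\Phi_{\omega\omega'}\bigr|$ (first for $\lambda_1,\lambda_2$ in a countable dense set, then for all complex parameters), chooses $x_0$ generic for all of them, observes that the renormalized identity holds exactly along the orbit of $x_0$, and concludes that $\int_X|\Phi'_\omega\Phi'_{\omega'}-\Phi'_{\omega\omega'}|\,dm$ equals the vanishing orbit average. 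You instead exploit the one-dimensionality of the eigenspaces under ergodicity to write $\Phi_\omega\Phi_{\omega'}=c(\omega,\omega')\Phi_{\omega\omega'}$ almost everywhere with $c(\omega,\omega')\in\mathrm{T}_1$, and then evaluate at a single point of the countable intersection of the relevant full-measure sets to exhibit $c$ as the coboundary of $\lambda(\omega)=\Phi_\omega(x_0)$. Your route is arguably the more economical one: once the cocycle identities are known to hold pointwise on a full-measure Borel set, any point of that set will do, so your intermediate appeal to the Pointwise Ergodic Theorem (to make $x_0$ "generic" for the $\Phi_\omega$) is actually superfluous — non-emptiness of a countable intersection of full-measure sets already suffices. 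What the paper's ergodic-theorem argument buys in exchange is that it never needs to pass through the scalar cocycle at all; both arguments are complete and rest on the same two ingredients, namely countability of $\mathcal{E}$ and the choice of a single good reference point.
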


\vspace{0.2cm}

\begin{proof} Consider a family $( \Phi _\omega )_{\omega \in \mathcal{E}}$ of eigenfunctions of modulus constant equal to $1$ on $X$, and such that the equality $\Phi _\omega (x.t) = \omega (t)\Phi _\omega (x)$ occurs on a $\mathbb{G}$-stable Borel set $\tilde{X}$ and for any $  t\in \mathbb{G} $. Since this collection of functions is countable there exists, by ergodicity of $m$, a Borel subset of full measure on $X$ such that $\vert \Phi _\omega \vert =1$ and such that the conclusion of the Pointwise Ergodic Theorem holds for any function in the collection
\begin{align*}\displaystyle \left\lbrace  \left| \lambda_1\Phi _\omega . \Phi _{\omega '} - \lambda _2\Phi _{\omega .\omega '}\right| \, : \, \omega \in \mathcal{E}, \, \lambda _1, \lambda _2 \in \mathbb{C}\right\rbrace 
\end{align*}

Indeed one can find such a Borel subset where the conclusion of the Pointwise Ergodic Theorem holds when $\lambda _1, \lambda _2$ are taken in a countable dense subset of $\mathbb{C}$, and one easily shows that on such set the conclusion of the Pointwise Ergodic Theorem also holds for any parameters $ \lambda _1, \lambda _2 \in \mathbb{C}$. Now pick up such an element $x_0$ and set $\Phi _\omega' := \overline{\Phi _\omega (x_0)}\Phi _\omega$: Then $ \Phi _\omega '. \Phi _{\omega '}' = \Phi _{\omega .\omega '}'$ along the orbit of $x_0$ as it is straightforward to observe, and it follows from the very choice of $x_0$ that 
\begin{align*}\displaystyle \int _{X}\vert \Phi _\omega '. \Phi _{\omega '}' - \Phi _{\omega .\omega '}'\vert  \, dm = \lim_{n\rightarrow \infty} \frac{1}{\vert \mathcal{A}_n \vert} \int _{\mathcal{A}_n} \vert \Phi _\omega '. \Phi _{\omega '}' - \Phi _{\omega .\omega '}'\vert  (x_0.t)\, dt =0
\end{align*}
which shows that $ \Phi _\omega '. \Phi _{\omega '}' = \Phi _{\omega .\omega '}'$ almost everywhere on $X$. Thus one has a new family $( \Phi _\omega ' )_{\omega \in \mathcal{E}}$ of eigenfunctions, still of modulus constant equal to $1$ on $X$ and such that the equality $\Phi _\omega '(x.t) = \omega (t)\Phi _\omega (x)'$ occurs on a $\mathbb{G}$-stable Borel set $\tilde{X}$ and for any $  t\in \mathbb{G} $, and obeying the stated equality.
\end{proof}

We continue our proof of Proposition \ref{prop.construction.Borel.factor.map}: Considering a family $( \Phi _\omega )_{\omega \in \mathcal{E} }$ of eigenfunctions as chosen and moreover satisfying the statement of Lemma \ref{lem.multiplicative.property}, one ends up with a $\mathbb{G}$-invariant Borel subset $$ \displaystyle X^0:= \bigcap _{\omega, \omega '\in \mathcal{E}} \left\lbrace \Phi _\omega . \Phi _{\omega '} = \Phi _{\omega.\omega '}\right\rbrace $$ of full $m$-measure in  $X$. For each $x$ in this Borel set, define $\pi (x)$ to be the unique character on $\mathcal{E}$ defined by $ \xymatrixcolsep{2pc}\xymatrix{ \omega  \ar@{|->}[r] & \Phi _\omega (x) }$, and to be the identity element elsewhere on $X$. This defines a map $\xymatrixcolsep{2.5pc}\xymatrix{ \pi : X \ar[r] &  \text{T}_{ \mathcal{E} }}$ obeying the desired properties: Indeed it is a Borel map which is $\mathbb{G}$-equivariant in the sense that for any $t\in \mathbb{G}$ one has $\pi (x.t) = \pi (x).\mathsf{i}(t)$ for $m$-almost every $x\in X$. From this latter property it comes that the probability measure $m$ push forwarded on $\text{T}_{ \mathcal{E} }$ is translation invariant by the dense subgroup $\mathsf{i}(\mathbb{G})$ and thus is the Haar measure on the compact Abelian group $\text{T}_{ \mathcal{E} }$, completing the proof of the statement.

\vspace{0.4cm}

\textbf{Acknowledgments.}  The author is grateful to Daniel Lenz for his careful reading of the manuscript.

\vspace{0.3cm}

\bibliographystyle{plain}

\bibliography{Biblio}

\end{document}